\newcommand\restr[2]{{
\left.\kern-\nulldelimiterspace 
#1 
\vphantom{\big|} 
\right|_{#2} 
}}
\providecommand{\keywords}[1]
{
\small
\textbf{\textit{Keywords---}} #1
}
\newtheorem{theorem}{Theorem}[section]
\newtheorem{corollary}[theorem]{Corollary}
\newtheorem{lemma}[theorem]{Lemma}
\newtheorem{definition}[theorem]{Definition}
\newtheorem{remark}[theorem]{Remark}
\newcommand{\sophie}[1]{{\color{red}\textsf{$\clubsuit\clubsuit\clubsuit$}  Sophie: [#1]}}
\newcommand{\mpendulo}[1]{{\color{blue}\textsf{$ \spadesuit \spadesuit \spadesuit$}  Mpendulo: [#1]}}
\title{The geometry of the moduli Space of non-cyclic biquadratic field extensions}
\author{MPENDULO CELE AND SOPHIE MARQUES}
\begin{document}
    \maketitle

    \begin{abstract}
        In this paper, we investigate the existence of an elementary abelian closure in characteristic not $2$ for biquadratic extensions. We discover that it exists for any non-cyclic extension. We make use of it to obtain a classification for this class of extensions up to isomorphism via descent. This permits us to describe the geometry of this moduli space in group theoretic terms. We also provide two families of polynomials with two parameters that can describe any quartic extensions. 
        
    \end{abstract}

\noindent \quad {\footnotesize MSC Code (primary): 11T22}

\noindent \quad {\footnotesize MSC Codes (secondary):  11R32, 11R16, 11T55, 11R58}

    \keywords{ biquadratic, elementary abelian, Galois, radical}

    \tableofcontents

    \section*{Introduction}\label{sec:introduction}
    
    In a previous work done by S. Marques and K. Ward in \cite{cubic}, \cite{MWcubic2}, \cite{MWcubic3}, the existence and the uniqueness of the purely cubic closure for cubics extension was identified and permitted to define purely cubic descent. These constructions have been used to classify cubic extensions as well as in computation of integral basis \cite{MWcubic4} and Riemann-Hurwitz formulae \cite{MWcubic5}. In this paper, we start a similar approach for biquadratic extensions. The exploration of the existence of the radical closure (see Definition \ref{radical}) for elementary abelian extensions revealed that radical extensions and elementary abelian extensions rarely intersect (see Theorem \ref{elem_radical}). Biquadratic extensions are quite different from cubic extensions. Fundamentally it begins with the two possible structure for groups of order $4$: $\mathbb{Z}/2\mathbb{Z} \times \mathbb{Z}/2\mathbb{Z}$ and $\mathbb{Z}/4\mathbb{Z}$ giving rise to two types of Galois extensions of degree $4$: the elementary abelian extensions and the cyclic extensions. Moreover, it was possible to nicely exhibit the geometry of the moduli space of the elementary abelian extension without the use of the radical closure (see Theorem \ref{final_elem_class}). Therefore, it became natural to wonder if in the quartic case, it could be that the notion of elementary abelian closure (see Definition \ref{eleclos}) becomes of particular interest and could be used to understand better the moduli space of those extensions up to isomorphism. In this paper we explore this idea. We are well aware that there is an extensive work that have been done on biquadratic extensions such as  \cite{Chu2002QuarticFA}, \cite{WQSQuartics}, \cite{Conrad1}. Nevertheless, our literature review did not find any, on the pursuit of the elementary abelian closure in order to be able to describe geometrically in group theoretic term the biquadratic extensions to understand its structure as a whole nor using the technics developed in this paper. The technics explored here are built so that they can be generalised for higher degree extensions in future. \\ \\
   
   In this paper, we start with setting the notations, definitions and basic results which will be used throughout the paper (\S 1). In the next section, we start with identifying two families of polynomials with two parameters that permit to construct any quartic extension (Lemma \ref{family} and Corollary \ref{familyc}). In doing so, we make explicit any change of variable that permits to pass from a general quadratic polynomial to one of those form. Among these two families, we find the biquadratic polynomials. The rest of the paper will focus on those. We first recall a few generalities about biquadratic extensions. In section \S 2.2, the reader will find a characterization of biquadratic extensions (see Lemma \ref{inter}), a criterium to determine the irreducibility of a biquadratic polynomial (see Lemma \ref{irred}), a criterium to determine the biquadratic extensions that are Galois (see Lemma \ref{galois_w}) and how to characterize the biquadratic extension according to their automorphism group (see Lemma \ref{galois_closure}). Those results are mostly well known in different form and context, for the sake of completeness we proved those for which we could not reference a proof in the given form. \\ \\ 
 
 The main goal of section \S 3 is to classify elementary abelian extensions so that we can describe the geometry of the moduli space of those extensions in group theoretic term. This is obtained in main theorem of this section: Theorem \ref{final_elem_class}. In doing so, we found various nice characterisations of elementary abelian extension (Theorem \ref{abelian-extension-properties}), that helped with the classification of elementary abelian extensions. Along the way, we identified when elementary abelian extension are radical, that happens precisely $F(i)$ is a quadratic subextension this extension (Theorem \ref{elem_radical}), which is a rare instance and explored a bit this path nevertheless. We discover that the radical closure may exist in other instances in the elementary abelian case but is non-unique, making it non applicable. \\ \\ 
 
Exploring a radical closure is not always of help for classification problems, in section \S 3.2, we identify when and how we can make use of the classification of elementary abelian extensions obtained in the previous section, using the notion of elementary abelian closure and descending along it. We realize in Lemma \ref{unique_elem_closure} that the existence of an elementary abelian closure is equivalent of being non-cyclic. In this case, the elementary abelian closure happens to be unique. The next result of the section, Theorem \ref{elem_closure_descent} permits to understand isomorphism classes of non-cyclic biquadratic extensions using the elementary abelian closure. In other words, it proves that isomorphism classes can be descended via the elementary abelian closure. In the remaining part of the paper, we highlight the geometry of the moduli space of those extensions. This conclude into the main result the paper Theorem \ref{final} that brings together all the pieces constructed previously into the same universe.

    \section{Background}\label{sec:background}
    \subsection{Basic context}
In the following, $F$ denotes a field of characteristic not $2$.
Given a quartic polynomial $P(X)$ we denote by $P'(X)$, $P''(X)$, $P'''(X)$ the first, second, third and forth derivative respectively.
If $\alpha \in L/F$ is an algebraic over $F$ we denote by $\min(\alpha, F)$ the minimal polynomial of $\alpha$ over $F$.
$\overline{F}$ denote an algebraic closure of $F$.

\begin{definition}
    Field extensions $L/F$ and $E/F$ are said to be \textbf{$F$-isomorphic} if there exist a ring isomorphism
    $\phi : L \rightarrow E$ such that $\phi(f)=f$ for all $f \in F$.
\end{definition}

\begin{definition}
    Let $K$ be a function field over $F$ and $L/K$ be a finite extension.
    The extension of function fields $L/K$ is said to be {\bf geometric} if $L \cap \overline{F}=F$.
\end{definition}

\begin{definition}
    Let $L/F$ and $K/F$ be field extensions, we assume that there is a field $E$ containing both $L$ and $K$.
    The \textbf{compositum} of $L.K$ is defined to be $L.K=F(L\cup K)$ where there right
    hand side denotes the extension of $F$  generated by $L$ and $K$ in $E$.
\end{definition}

We will make use of the following elementary lemma multiple times in this paper. We chose to include the proof for completeness. 
\begin{lemma}\label{quad}
    Let $L/F$ and $L'/F$ be quadratic field extension, $\gamma \in L$ and $\delta \in L'$ be primitive element such that $\gamma^2=a\in F$ and
    $\delta^2=b \in F$ then $L$ and $L'$ are $F$-isomorphic if and only if $a/b \in F^2$ or equivalently $\gamma/ \delta \in F$.
    Consequently, $$\{ L/F \text{ quadratic extension}\}/ \sim_{iso} \simeq \frac{ F^\times}{(F^{\times})^2} \backslash \{ (F^{\times})^2\} $$
    where $\sim_{iso}$ is the equivalence relation on the quadratic extensions $L/F\sim_{iso} L'/F$ if $L/F$ is $F$-isomorphic
    to $L'/F$.
\end{lemma}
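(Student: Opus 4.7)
The plan is to prove the equivalence $L\cong_F L'\iff a/b\in(F^\times)^2$ first, and then deduce the bijection from it by checking well-definedness, injectivity and surjectivity.

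For the forward direction, suppose $\varphi:L\to L'$ is an $F$-isomorphism. Using $\{1,\delta\}$ as an $F$-basis of $L'$, write $\varphi(\gamma)=u+v\delta$ with $u,v\in F$. Squaring and using $\varphi(\gamma^2)=a$ gives $a=u^2+v^2b+2uv\delta$. Since $\delta\notin F$, comparing the $\delta$-coordinate yields $2uv=0$, and because $\mathrm{char}(F)\neq 2$ either $u=0$ or $v=0$. The case $v=0$ is ruled out because it forces $\gamma=\varphi^{-1}(u)\in F$, contradicting that $\gamma$ is a primitive element of a nontrivial quadratic extension. Hence $u=0$ and $a=v^2b$, so $a/b=v^2\in(F^\times)^2$; this is the $\gamma/\delta$ of the statement once one identifies $\gamma$ with $v\delta$ via $\varphi$.

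For the backward direction, suppose $a/b=c^2$ with $c\in F^\times$. Then $(c\delta)^2=c^2b=a$, so $c\delta$ is a root of $X^2-a$. Since $\gamma\notin F$, the polynomial $X^2-a$ is irreducible over $F$ and equals $\min(\gamma,F)$, so by the universal property of $F[X]/(X^2-a)\cong L$ there is a well-defined $F$-algebra homomorphism $L\to L'$ sending $\gamma\mapsto c\delta$. Injectivity is automatic (domain is a field) and a dimension count shows surjectivity, giving the desired $F$-isomorphism.

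For the corollary, I would first note that every quadratic extension of $F$ admits a primitive element whose square lies in $F$: given $L=F(\alpha)$ with $\min(\alpha,F)=X^2+pX+q$, the element $\beta=\alpha+p/2$ (well-defined since $\mathrm{char}(F)\neq 2$) satisfies $\beta^2=p^2/4-q\in F$. This defines a map sending $L/F$ to the class $[\gamma^2]\in F^\times/(F^\times)^2$, and taking $L=L'$ in the equivalence shows it is independent of the chosen primitive element. Its image avoids the trivial class $(F^\times)^2$ because $\gamma\notin F$, and conversely every $a\in F^\times\setminus(F^\times)^2$ is hit by $F[X]/(X^2-a)$. Injectivity up to $F$-isomorphism is exactly the first equivalence of the lemma.

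The main obstacle is really bookkeeping rather than a substantive difficulty: one must be careful that the "equivalently $\gamma/\delta\in F$" clause is interpreted inside an ambient overfield (or after applying $\varphi$), and that the map $L\mapsto[\gamma^2]$ in the corollary is shown to be independent of the choice of primitive element before invoking the equivalence for injectivity.
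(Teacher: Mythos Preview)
Your proof is correct and follows essentially the same approach as the paper: both directions of the equivalence are argued identically (write $\varphi(\gamma)$ in the basis $\{1,\delta\}$, square, compare coefficients for $\Rightarrow$; exhibit $c\delta$ as a root of $X^2-a$ for $\Leftarrow$), and the bijection is obtained by sending an extension to the square-class of a radical parameter. Your treatment of the corollary is in fact more careful than the paper's one-line sketch, since you explicitly verify existence of a radical generator via completing the square and check well-definedness by applying the equivalence with $L=L'$.
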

\begin{proof}
    $\Longrightarrow$ Let $\psi : L \rightarrow L'$ be an $F$-isomorphism and $\gamma'=\psi(\gamma)$ then $(\gamma')^2=a$ and $\gamma'=c+d\delta$ for some $c,d \in F$.
    Replacing $\gamma'$ by $c+d\delta$ into the equation $\gamma'^2-a=0$ and using the uniqueness of the minimal polynomial, we prove that
    $c=0 \text{ and } a=d^{2}b$ as desired.\\
    $\Longleftarrow$ To show that $L/F$ and $K/F$ are $F$-isomorphic when $a/b \in F^2$, we write $a=d^{2}b$ for some $d \in F$ and prove that $d\delta$ is a primitive element of $F(\delta)$ with minimal polynomial $X^2-a$.

    The last isomorphism is obtained by sending the class of an extension up to isomorphism to the class of a radical parameter for some given radical generator up to squares.

\end{proof}

\subsection{Notation and terminology around quartic extensions}\label{subsec:notation}

In this paper, we will study and classify a special case of quartic extensions the non-cyclic biquadratic extensions.
\begin{definition} A field extension $L/F$ is said to be a \textbf{quartic extensions} if there exist a primitive element in $L$ that has a minimal polynomial of degree $4$.
\end{definition}

\begin{definition}
    A quartic field extension $L/F$ is said to be \textbf {biquadratic} if there exist a primitive element
    $\alpha \in L$
    such that $\min(\alpha, F) = X^4+uX^2+w$ where $u,w \in F$.
    Such $\alpha$ is called a biquadratic generator.
\end{definition}

Biquadratic Galois extensions are either cyclic or elementary abelian.
\begin{definition}
    A quartic field extension $L/F$ is said to be \textbf{cyclic} if $L/F$ is Galois and Galois group is isomorphic to
    $\mathbb{Z}/4\mathbb{Z}$.
\end{definition}

\begin{definition}
    A quartic field extension $L/F$ is said to be an \textbf{elementary abelian extension} if $L/F$ is Galois and
the Galois group is isomorphic to $\mathbb{Z}/2\mathbb{Z}\times\mathbb{Z}/2\mathbb{Z}$.
\end{definition}

\begin{remark}
    In the literature, one may also define a biquadratic field extension as a Galois extensions with
    Galois group isomorphic to $\mathbb{Z}/2\mathbb{Z}\times\mathbb{Z}/2\mathbb{Z}$.
    That definition is not equivalent to the definition we provided.
\end{remark}


\begin{definition}
    A finite field extension $L/F$ of degree $n$ is said to be \textbf{radical} if there exist a primitive element $\alpha \in 		L$ such that
    $\alpha^n \in F$.
    Such $\alpha$ is called a radical generator.
\end{definition}

\begin{definition} \label{radical} 
    Let $L/F$ be a field extension of degree $n$, a \textbf{radical closure} of $L/F$ (when it exists) is an extension $K/F$ of smallest
    degree such that $LK/K$ is a radical extension of degree $n$.
\end{definition}

\begin{remark}\label{remark_rmk}
\begin{enumerate}
    \item If $L/F$ be a biquadratic quartic extension admitting a radical closure $K$ then $L \cap K=F$.
    Indeed, pick a biquadratic generator $\alpha \in L$ with minimal polynomial $P(X)=X^4+uX^2+w$ over $F$.
    We argue by contradiction, suppose there exists $\theta \in L \cap K - F$, then $F(\theta)=L$ or $F(\theta)$ is a
    quadratic sub-extension of $L/F$, so $P(X)$ is reducible over $F(\theta)$ hence reducible over $K$.
    This proves that the minimal polynomial of $\alpha$ over $K$ is a proper divisor of $P(X)$ in $K[X]$ and $K(\alpha)/K$ is not a quartic field extension, contradicting the definition  of radical closure so $L \cap K = F$.
    \item In Theorem \ref{elem_radical} we prove that when an elementary abelian  extensions admits non-trivial radical
        closure then we have precisely $3$ non-isomorphic radical closures.
\end{enumerate}
\end{remark}

\begin{definition}\label{eleclos}
    Let $L/F$ be a field extension of degree $n$, an \textbf{elementary abelian closure} of $L/F$ (when it exists)
    is an extension $K/F$ of smallest degree such that $LK/K$ is an elementary abelian of degree $n$.
\end{definition}

\begin{definition}
    Let $L/K/F$ and $L'/K/F$ be a towers of fields, we say the {\bf towers are isomorphic} denoted as $L/K/F \cong L'/K'/F$ if there exist a ring
    isomorphism $\phi: L \rightarrow L'$ such that  $\restr{\phi}{K}$ is an $F$-isomorphism.
\end{definition}

    \section{Generalities about quartic extensions}\label{sec:biquadratic}
    
\subsection{Families of minimal polynomials with at most two parameters} 

In the following results, we are looking for a family of minimal polynomials with at most two parameters to represent all the quartic extensions. We found two of them the biquadratic polynomials and the polynomials of the form $T(X)=X^4+X^3+cX^2+d$ where $c$ and $d$ are in the ground field. 
\begin{lemma}\label{family}
    Given a quartic polynomial \[P(X) = X^4 + uX^3 + vX^2 + wX + z\]
    where $u,v,w,z \in F$ \\
    We denote $P_0= P\left(-\frac{u}{4}\right)$, $P_1= P'\left(-\frac{u}{4}\right)$, $P_2= P''\left(-\frac{u}{4}\right)$
    and $P_4=p''''(-\frac{u}{4})$
    \begin{enumerate}
        \item When $z=0$, then $P(X)$ is reducible with $0$ as a root;
        \item When $z\neq 0$, $P_0 = 0$, then $P(X)$ is reducible with $-\frac{u}{4}$ as a root;
        \item When $z\neq 0$, $P_0\neq 0$ and $P_1 = 0$, then $P(X)$ is irreducible if and only if $S(X)=P(X-\frac{u}{4})= X^4+ aX^2 + b$ is irreducible where $a =\frac{1}{2}P_2$ and $b= P_0$
        \item When $z\neq 0$, $u\neq 0$, $P_0\neq 0$ and $P_1\neq 0$, then the following statement are equivalent:
        \begin{enumerate} 
        \item $P(X)$ is irreducible 
        \item $R(X)= X^4 + aX^2+ bX+b$ is irreducible where $a= \frac{P_1^2 P_2}{2P_0^2}$ and $b= \frac{P_1^4}{P_0^3}$.
        \item $T(X)=X^4+X^3+cX^2+d$ is irreducible where $c= \frac{P_2 P_0}{2P_1^2}$ and $b= \frac{P_0^3}{P_1^4}$.
        \end{enumerate} 
    \end{enumerate}
\end{lemma}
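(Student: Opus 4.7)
The plan is to exploit the standard translation $X \mapsto X - u/4$ that eliminates the cubic term, together with two classical normalising transformations: dilation $X \mapsto \lambda X$ and reversal $X \mapsto 1/X$ (on polynomials with nonzero constant term). Since $P''''(X) = 24$ and $P'''(X) = 24X + 6u$ gives $P'''(-u/4) = 0$ automatically, the Taylor expansion of $P$ around $-u/4$ reads
\[
P\!\left(X - \frac{u}{4}\right) \;=\; X^4 + \frac{P_2}{2}X^2 + P_1 X + P_0.
\]
Parts (1) and (2) are then immediate from $P(0) = z$ and $P(-u/4) = P_0$ respectively. For part (3), the hypothesis $P_1 = 0$ collapses the translated polynomial to $S(X) = X^4 + \frac{P_2}{2}X^2 + P_0$, and since translation is an $F$-algebra automorphism of $F[X]$ it preserves irreducibility, giving the claimed equivalence at once.

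For part (4), I would reduce the translated polynomial $Q(X) := X^4 + \frac{P_2}{2}X^2 + P_1 X + P_0$ to each of the two target forms via an explicit invertible transformation. To obtain $R$, substitute $X \mapsto \lambda X$ with $\lambda = P_0/P_1$ (well-defined because $P_0, P_1 \neq 0$): this choice is forced by the requirement that after dividing by $\lambda^4$, the coefficient of $X$ coincide with the constant term. Reading off the resulting coefficients produces exactly $a = \frac{P_1^2 P_2}{2P_0^2}$ together with the common value $b = \frac{P_1^4}{P_0^3}$. To obtain $T$, I would first apply the reversal $X \mapsto 1/X$ (legitimate since $P_0 \neq 0$, so that $X^4 Q(1/X)$ has the same degree as $Q$ and is irreducible if and only if $Q$ is); the vanishing of the $X^3$ coefficient of $Q$ becomes the vanishing of the $X^1$ coefficient of the reversal, explaining why $T$ has no linear term. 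Dilating next by $X \mapsto (P_1/P_0) X$ and normalising the leading coefficient simultaneously forces the coefficient of $X^3$ to equal $1$; reading off the remaining coefficients yields $c = \frac{P_2 P_0}{2 P_1^2}$ and constant term $\frac{P_0^3}{P_1^4}$ (which I interpret as the $d$ in the statement of $T$, since the label $b$ there appears to be a typographical slip).

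The entire argument rests on the elementary fact that translation, non-zero dilation, and reversal (on polynomials with non-zero constant term) are all bijections on the set of monic irreducible polynomials of a given degree in $F[X]$. The main obstacle is not conceptual but a careful bookkeeping of coefficients: one must verify that the prescribed substitutions produce precisely the values of $a,b,c,d$ claimed, and check at each step that the non-vanishing hypotheses $u \neq 0$, $P_0 \neq 0$, $P_1 \neq 0$ (the first ensuring the translation performs genuine work in distinguishing the $R$ and $T$ forms, the latter two ensuring the dilation and the reversal are defined and invertible over $F$) are used in exactly the right places.
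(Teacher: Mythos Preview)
Your proposal is correct and follows essentially the same route as the paper: Taylor expansion at $-u/4$ to kill the cubic term, then a dilation $X\mapsto (P_0/P_1)X$ to reach $R$, and a reversal to reach $T$. The only cosmetic difference is that the paper obtains $T$ as the reversal of $R$ (i.e.\ $T(X)=\tfrac{X^4}{b}R(1/X)$), whereas you reverse the translated polynomial first and then dilate; the two orders amount to the same composite transformation and yield the same coefficients. Your identification of the ``$b$'' in the $T$-clause as a typographical slip for $d$ is also what the paper intends.
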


\begin{proof}
    $1.$ and $2.$ are clear.\\
    For $3.$ and $4.$, we set $S(X)=P(X-u/4)$.
    Using tailor expansion, we have
    $$P(X)=P_0+P_1(X+\frac{u}{4})+\frac{1}{2}P_2(X+\frac{u}{4})^2+\frac{1}{6}P_3(X+\frac{u}{4})^3+\frac{1}{24}P_4(X+\frac{u}{4})^4$$
    since $P_3=P'''(-\frac{u}{4})=0$ and $P_4=P''''(X)=24$ we have
    $$P(X)=P_0+P_1(X+\frac{u}{4})+\frac{1}{2}P_2(X+\frac{u}{4})^2+(X+\frac{u}{4})^4$$

    hence
    $$S(X)=P(X-\frac{u}{4})=P_0+P_1X+\frac{1}{2}P_2X^2+X^4$$
    \begin{enumerate}
        \item [3.] When $P_1=0$ then $S(X)=X^4+\frac{1}{2}P_{2}X^2+P_0$, therefore the result is clear.
        \item[4.] Setting $R(X)=\frac{P_{1}^4}{P_{0}^4}S(\frac{P_0}{P_1}X)=
        \frac{P_1^4}{P_0^4}\bigg((\frac{P_0}{P_1}X)^4 + \frac{1}{2}P_2(\frac{P_0}{P_1}X)^2 + P_1(\frac{P_0}{P_1}X) + P_0 \bigg)=
        X^4 + \frac{P_1^{2}P_2}{2P_0^2}X^2 + \frac{P_1^4}{P_0^3}X + \frac{P_1^4}{P_0^3}$ and $T(X) =\frac{X^4}{b} R( \frac{1}{X})$, there the result is clear.
    \end{enumerate}

\end{proof}

\begin{corollary}\label{familyc}
    Let $L/F$ be a quartic field extension, $x \in L$ be a primitive element with minimal polynomial
     \[P(X) = X^4 + uX^3 + vX^2 + wX + z\]
    where $u,v,w,z \in F$ \\
    We denote $P_0= P\left(-\frac{u}{4}\right)$, $P_1= P'\left(-\frac{u}{4}\right)$ and $P_2= P''\left(-\frac{u}{4}\right)$.
    \begin{enumerate}
        \item When $P_1=0$ then $y=x+\frac{u}{4}$ is a primitive element with minimal polynomial\[R(X)=X^4+\frac{1}{2} P_{2}X^2+P_0\]
        \item When $P_1 \neq 0$ then \begin{enumerate} 
        \item  $y=\frac{P_0}{P_1}(x+\frac{u}{4})$ is a primitive element with minimal polynomial
        \[S(X)=X^4+aX^2+bX+b\] where $a= \frac{P_1^2 P_2}{2P_0^2}$ and $b= \frac{P_1^4}{P_0^3}$
        \item $z= \frac{4P_0}{P_1(4x+u)}$ is a primitive element with the minimal polynomial
            $T(X)=X^4+X^3+cX^2+d$  where $c= \frac{P_2 P_0}{2P_1^2}$ and $d= \frac{P_0^3}{P_1^4}$
        \end{enumerate} 
    \end{enumerate}
\end{corollary}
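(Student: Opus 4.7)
The plan is to deduce this corollary directly from Lemma \ref{family} and the Taylor expansion computation already carried out in its proof. The polynomials $R$, $S$, $T$ of the lemma were constructed precisely as the minimal polynomials of suitable affine (or reciprocal) transforms of a root $x$ of $P$, so the corollary is essentially a reinterpretation of the lemma in terms of primitive elements.

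For case 1 ($P_1 = 0$), the element $y = x + u/4$ satisfies $P(y - u/4) = 0$. The Taylor expansion from the lemma's proof reduces $P(X - u/4)$ to $X^4 + \tfrac{1}{2} P_2 X^2 + P_0$ when $P_1 = 0$, and since $P$ is irreducible (being the minimal polynomial of a primitive element of a degree-four extension), Lemma \ref{family}(3) forces this polynomial to be irreducible; it is therefore $\min(y, F)$. Primitivity is immediate as $F(y) = F(x) = L$.

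For case 2(a), with $P_1 \neq 0$, I would substitute an appropriate scalar multiple of $x + u/4$ into the translated polynomial $X^4 + \tfrac{1}{2} P_2 X^2 + P_1 X + P_0$ and clear the resulting denominators. Matching the constant and linear coefficients (so that both equal a common value $b$) fixes the scaling factor uniquely in terms of $P_0, P_1$, and produces the polynomial $X^4 + aX^2 + bX + b$ with $a = P_1^2 P_2/(2P_0^2)$ and $b = P_1^4/P_0^3$. Irreducibility then follows from Lemma \ref{family}(4), and primitivity is again immediate.

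Case 2(b) follows from case 2(a) by a direct reciprocation: since $4x + u = 4(x + u/4)$, a one-line computation shows that $z$ equals $1/y$ for the $y$ of case 2(a). The minimal polynomial of a reciprocal is the reverse polynomial divided by its constant term, which produces exactly $T(X) = X^4 + X^3 + (a/b) X^2 + 1/b$; substituting the expressions for $a$ and $b$ recovers the stated $c = P_0 P_2/(2P_1^2)$ and $d = P_0^3/P_1^4$. The only obstacle here is the bookkeeping of constants through the scaling and reciprocation steps; no new algebraic idea is required beyond Lemma \ref{family}.
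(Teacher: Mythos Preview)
Your approach is exactly what the paper intends: the corollary carries no separate proof and is meant to be read off directly from the substitutions performed in the proof of Lemma~\ref{family}. Cases 1 and 2(a) are handled correctly in outline.

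There is, however, a slip in your treatment of 2(b). You assert that a one-line computation shows $z = 1/y$ for the $y$ of 2(a). With $y = \tfrac{P_0}{P_1}\bigl(x + \tfrac{u}{4}\bigr)$ as printed in the corollary, one gets $1/y = \tfrac{P_1}{P_0(x+u/4)}$, whereas $z = \tfrac{4P_0}{P_1(4x+u)} = \tfrac{P_0}{P_1(x+u/4)}$; these differ by the factor $(P_0/P_1)^2$. The discrepancy actually comes from a misprint in the statement itself: the substitution $R(X)=\tfrac{P_1^4}{P_0^4}S\bigl(\tfrac{P_0}{P_1}X\bigr)$ in the proof of Lemma~\ref{family} shows that the root of $X^4+aX^2+bX+b$ is $y=\tfrac{P_1}{P_0}\bigl(x+\tfrac{u}{4}\bigr)$, with the ratio $P_0/P_1$ inverted. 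With this corrected $y$ your reciprocation $z=1/y$ and the reverse-polynomial computation go through exactly as you describe. So your strategy is right, but actually writing out that ``one-line computation'' would have exposed the typo in 2(a) rather than confirmed the claim; it is worth doing so.
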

\subsection{Generalities about biquadratic extensions}\label{subsec:generalities-about-biquadratic-extensions}
In the rest of the paper, we will only focus on biquadratic extensions.

The following characterisation of biquadratic extension is well-known and very useful.

\begin{lemma}(\cite[Theorem 4.1]{Conrad2}) \label{inter}
    Let $L/F$ be a quartic field extension, then $L/F$ is a biquadratic if and only if $L/F$ has intermediate
    quadratic sub-extension.
\end{lemma}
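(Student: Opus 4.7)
The plan is to prove the two directions separately, using the structure theorem for quadratic extensions in characteristic $\neq 2$ (that is, such extensions are always of the form $F(\sqrt{a})$).

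For the forward direction, suppose $L/F$ is biquadratic with a primitive element $\alpha$ satisfying $\min(\alpha,F) = X^4 + uX^2 + w$. I would take $\beta = \alpha^2$ and observe that $\beta$ is a root of $Q(X) = X^2 + uX + w \in F[X]$. The subfield $F(\beta) \subseteq L$ has degree $1$ or $2$ over $F$. If $\beta \in F$, then $\alpha$ would satisfy the quadratic $X^2 - \beta = 0$ over $F$, contradicting $[F(\alpha):F] = 4$. Hence $F(\alpha^2)$ is an intermediate quadratic subextension.

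For the backward direction, suppose $K$ is an intermediate quadratic subextension. Since $\mathrm{char}(F) \neq 2$, I can complete the square to write $K = F(\gamma)$ with $\gamma^2 = a \in F^{\times} \setminus (F^{\times})^2$. Again since $[L:K] = 2$ and $\mathrm{char}(F) \neq 2$, I can write $L = K(\delta)$ with $\delta^2 = c + d\gamma$ for some $c,d \in F$. I would then split into two cases. If $d \neq 0$, then $\delta^2 \notin F$, so $F(\delta^2) = K$, which forces $F(\delta) = L$; a direct computation, squaring $\delta^2 - c = d\gamma$, shows $\delta$ satisfies $X^4 - 2cX^2 + (c^2 - d^2 a) = 0$, a biquadratic polynomial. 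If $d = 0$, then $\delta^2 = c \in F$ and $F(\delta)$ is a second quadratic subextension distinct from $K$; I would take $\alpha = \gamma + \delta$ and compute $(\alpha^2 - a - c)^2 = 4ac$, yielding the biquadratic relation $\alpha^4 - 2(a+c)\alpha^2 + (a-c)^2 = 0$.

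The main sub-obstacle will be, in each of the two sub-cases, verifying that the chosen element is genuinely a primitive element of $L$ (so that the exhibited biquadratic polynomial is actually its minimal polynomial, not a proper multiple). In the first sub-case this follows immediately from $F(\delta) \supseteq F(\delta^2) = K$ and $\delta \notin K$. In the second sub-case I would rule out $F(\gamma + \delta)$ being any of the three possible quadratic subextensions $F(\gamma)$, $F(\delta)$, $F(\gamma\delta)$ by a short argument: membership in $F(\gamma)$ would force $\delta \in F(\gamma)$, hence $a/c \in (F^{\times})^2$, contradicting that $F(\gamma) \neq F(\delta)$, and the other two cases are symmetric. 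This forces $[F(\gamma + \delta):F] = 4$, completing the proof.
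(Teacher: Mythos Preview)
The paper does not supply its own proof of this lemma; it simply cites Conrad's notes. Your argument is correct and complete, and in fact your second sub-case ($d=0$) anticipates what the paper later proves as the implication $4\Rightarrow 5$ of Theorem~\ref{abelian-extension-properties}: there the irreducibility of $X^4-2(a+c)X^2+(a-c)^2$ is established directly via the criterion of Lemma~\ref{irred}, rather than by checking that $\gamma+\delta$ avoids each quadratic subfield. Either route works.

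One small point of care: you describe the three cases $F(\gamma)$, $F(\delta)$, $F(\gamma\delta)$ as ``symmetric'', but the third is not literally symmetric to the first two (the element $\gamma+\delta$ does not transform into itself under the substitution $(\gamma,\delta)\mapsto(\gamma,\gamma\delta)$). It is nonetheless just as easy: since $\{1,\gamma,\delta,\gamma\delta\}$ is an $F$-basis of $L$, any element of $F(\gamma\delta)$ has vanishing $\gamma$- and $\delta$-coordinates, which $\gamma+\delta$ visibly does not. Alternatively, if $\gamma+\delta\in F(\gamma\delta)$ then, multiplying by $\gamma$, one gets $a+\gamma\delta\in F(\gamma\delta)$ and hence $\gamma\in F(\gamma\delta)$, forcing $F(\gamma)=F(\gamma\delta)$ and then $c\in F^2$ by Lemma~\ref{quad}, a contradiction.
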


The following lemma give a criterium to determine when a biquadratic polynomial is irreducible.

\begin{lemma} \label{irred}
    Let $F$ be a field and  $P(X) = X^4 + uX^2 + w \in F[X]$ with roots $\alpha,-\alpha,\beta,-\beta$ in it's splitting field,
    then the following are equivalent
    \begin{enumerate}
        \item $P(X)$ is irreducible over $F$
        \item $\alpha^2, \alpha+\beta $ and $\alpha-\beta$ are not in $F$
        \item $u^2-4w, -u+2\omega ,$ and $-u-2\omega$ are not $F^2$, where $\omega^2 =w$. 
    \end{enumerate}
\end{lemma}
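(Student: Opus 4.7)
The plan is to characterise irreducibility of the biquadratic $P(X) = X^4 + uX^2 + w$ by an exhaustive analysis of its potential factorizations in $F[X]$, and then to convert the resulting root conditions into square-class conditions on the coefficients using Vieta's relations. I would treat $(1) \Leftrightarrow (2)$ first, then $(2) \Leftrightarrow (3)$.

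For $(1) \Leftrightarrow (2)$, I would first note that since the roots of $P$ come in pairs $\{\pm\alpha\}$ and $\{\pm\beta\}$, any linear factor of $P$ in $F[X]$ forces one of $\pm\alpha,\pm\beta$ into $F$ and hence $\alpha^2 \in F$ (swapping $\alpha$ and $\beta$ if necessary, which is legitimate since $\alpha^2+\beta^2=-u$ and $\alpha^2\beta^2=w$ by Vieta). Thus I may restrict attention to proper quadratic factorizations over $F$. The four roots partition into two unordered pairs in exactly three ways,
\[
\{\alpha,-\alpha\}\sqcup\{\beta,-\beta\},\quad \{\alpha,\beta\}\sqcup\{-\alpha,-\beta\},\quad \{\alpha,-\beta\}\sqcup\{-\alpha,\beta\},
\]
and writing down the corresponding pair of monic quadratics in each case shows the factorization lies in $F[X]$ iff, respectively, $\alpha^2 \in F$; or $\alpha+\beta \in F$ and $\alpha\beta \in F$; or $\alpha-\beta \in F$ and $\alpha\beta \in F$. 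The key simplification is the identity $(\alpha\pm\beta)^2 = -u \pm 2\alpha\beta$, which shows that $\alpha\pm\beta \in F$ automatically implies $\alpha\beta \in F$. Hence the three quadratic factorizations are detected precisely by $\alpha^2$, $\alpha+\beta$, and $\alpha-\beta$ individually lying in $F$, yielding the equivalence.

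For $(2) \Leftrightarrow (3)$, I would use that $\alpha^2$ is a root of $Y^2 + uY + w$, so $\alpha^2 \in F$ iff the discriminant $u^2-4w$ is a square in $F$. Fixing $\omega = \alpha\beta$ (which satisfies $\omega^2 = w$), the same identity $(\alpha\pm\beta)^2 = -u \pm 2\omega$ translates $\alpha\pm\beta \in F$ into $-u \pm 2\omega \in F^2$. The condition in $(3)$ is well-posed because replacing $\omega$ by $-\omega$ merely swaps $-u+2\omega$ and $-u-2\omega$, and it is symmetric in these two quantities.

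The main obstacle, though mild, is the bookkeeping step that reduces the apparent two-condition requirement ``$\alpha\pm\beta \in F$ \emph{and} $\alpha\beta \in F$'' for the mixed quadratic factorizations to the single condition on $\alpha\pm\beta$. Once this simplification is in hand, everything reduces to direct Vieta computations, and there is no deeper algebraic content to navigate.
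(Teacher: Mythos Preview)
Your proposal is correct and follows essentially the same route as the paper: reduce to the three possible monic quadratic factorizations coming from the three pairings of the roots, use the identity $(\alpha\pm\beta)^2=-u\pm 2\alpha\beta$ to collapse the two-condition requirement to the single condition $\alpha\pm\beta\in F$, and then translate each of the three root conditions into the corresponding square-class condition on $u^2-4w$ and $-u\pm 2\omega$. Your added remark on the well-posedness of condition~(3) under $\omega\mapsto -\omega$ is a welcome clarification not made explicit in the paper.
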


\begin{proof}
    Let $\Delta=\alpha^2-\beta^2$ then $\Delta^2=u^2-4w$.
    Moreover, $P(X)$ is reducible if and only if $P(X)$ has a monic quadratic factor in $F[X]$.
    Indeed, the factor can be chosen to be monic since $F$ is a field.
    Moreover, if $P(X)$ is reducible over F and does not have a monic quadratic factor, the factor would have a root,
    say $\alpha$ but then $-\alpha$ is also a root and $X^2-\alpha^2$ divides $P(X)$. The converse is clear.
    We note also that over $\overline{F}$, we have $$ P(X) = (X - \alpha ) ( X+ \alpha ) (X- \beta) ( X+ \beta)$$
    Therefore, 
    there are $3$ ways to write $P(X)$ as a product quadratic polynomials, these are
    \begin{enumerate}
        \item[(a)] $\left( X^2-\alpha^2  \right) \left( X^2- \beta^2 \right)$.
        \item[(b)] $\big(X^2-(\alpha+\beta)X +\alpha\beta\big)\big(X^2+(\alpha+\beta)X +\alpha\beta\big)$.
        \item[(c)] $\big(X^2-(\alpha-\beta)X -\alpha\beta\big)\big(X^2+(\alpha-\beta)X -\alpha\beta\big)$.
    \end{enumerate}
    The factorisation $1.$ lies in $F[X]$ if and only if $\alpha^2 \in F^2$ if and only the discriminant $\Delta^2$ of
    $P(X^2)$ is a square in $F$.\\
    The factorisation $2.$ lies in $F[X] $ if and only if $\alpha + \beta \in F$.
    Indeed, $\alpha + \beta \in F$ implies that $\alpha \beta\in F$ since
    $ \alpha \beta = - \frac{1}{2} \left( \alpha ^2 + \beta^2 - (\alpha + \beta )^2 \right)$ and
    $\alpha^2 + \beta^2 = - u \in F$, this is true if and only if $(\alpha + \beta)^2 = \alpha^2 + \beta^2 +2 \alpha \beta= -u + 2 \omega \in F^2$ where $\omega^2 =w$.

    Finally, the factorisation $3.$ lies in $F[X]$ if and only if $\alpha - \beta \in F$.
    Indeed, $\alpha - \beta \in F$ implies that $\alpha \beta\in F$ since
    $ \alpha \beta = \frac{1}{2} \left( \alpha ^2 + \beta^2 - (\alpha - \beta )^2 \right)$ and
    $\alpha^2 + \beta^2 = - u \in F$, this is true if and only if $(\alpha - \beta)^2 = \alpha^2 + \beta^2 - 2 \alpha \beta= -u - 2 \omega \in F^2 $.
\end{proof}

\begin{remark}\label{inter_rmk}
	Let $F$ be a field and $P(X)=X^4+uX^2+w \in F[X]$. If $\alpha \in F$ is a root of $P(X)$ then $u^2-4w \in F^2$. This follows from the fact that $\alpha^2$ 
			is a root of $P(X^2)=X^2+uX+w$ in $F$. 
			In the case, when $P(X)$ is irreducible over $F$ and $L:=F[X]/{<}P(X){>}$ and $\alpha$ is a generator for $L/F$ with minimal polynomial $P(X)$, 
			we can deduce that $u^2-4w \in L^2$. We also have that $F(\alpha^2)= F(\gamma)$ is a quadratic sub-extension of $L/F$ with $\gamma^2 = u^2 - 4w$. 
\end{remark}

The following result characterise the Galois biquadratic extensions.

    \begin{lemma}\label{galois} \label{galois_w}
        Let $L/F$ be a biquadratic field extension, $\alpha \in L$ be a biquadratic generator with minimal polynomial
        $P(X)=X^4+uX^2+w$ over $F$.
        Then the following statements are equivalent,
        \begin{enumerate}
            \item $L/F$ is a Galois extension;
            \item $u^2-4w,u-2\omega$ and $u+2\omega$ are in $L^2$ with $\omega^2 = w$;
            \item $w \in L^2$.
        \end{enumerate}
    \end{lemma}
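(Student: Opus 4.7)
The plan is to reduce all three conditions to the single membership $\beta \in L$, where $\pm\alpha, \pm\beta$ denote the roots of $P(X) = X^4 + uX^2 + w$ in a fixed algebraic closure of $F$. Throughout, I would fix the sign choice $\omega = \alpha\beta$, which is compatible with the identity $w = \alpha^2\beta^2$.

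For $(1) \Longleftrightarrow (3)$: Since $L = F(\alpha)$ and $P$ is the minimal polynomial of $\alpha$ over $F$, the extension $L/F$ is Galois exactly when $L$ coincides with the splitting field of $P$ over $F$, i.e.\ when all four roots $\pm\alpha, \pm\beta$ of $P$ lie in $L$. Because $\beta^2 = -u - \alpha^2$ is already in $L$, this collapses to the single condition $\beta \in L$. Now $w = (\alpha\beta)^2 = \omega^2$, so $w \in L^2$ is equivalent to $\omega \in L$, which (since $\alpha \in L^{\times}$) is equivalent to $\beta = \omega/\alpha \in L$. This closes the first equivalence.

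For $(2) \Longleftrightarrow (3)$: I would invoke the three squaring identities
\[
(\alpha^2 - \beta^2)^2 = u^2 - 4w, \qquad (\alpha + \beta)^2 = -u + 2\omega, \qquad (\alpha - \beta)^2 = -u - 2\omega,
\]
already extracted in the proof of Lemma \ref{irred}. The first is automatic: $\alpha^2 - \beta^2 = 2\alpha^2 + u \in L$, so $u^2 - 4w \in L^2$ for any biquadratic generator. For the other two, being a square in $L$ is equivalent to $\alpha + \beta \in L$, respectively $\alpha - \beta \in L$, each of which (since $\alpha \in L$) amounts to $\beta \in L$. By the first paragraph this is the same as condition $(3)$, giving the second equivalence.

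I do not anticipate a genuine obstacle; the content is a clean bookkeeping of Vieta's relations. The only point requiring care is to maintain a single consistent choice of $\omega = \alpha\beta$ so that the three squaring identities hold simultaneously, and to reconcile the signs in the statement of $(2)$ with those produced by the natural expansions $(\alpha \pm \beta)^2 = -u \pm 2\omega$, matching the conventions used in Lemma \ref{irred}.
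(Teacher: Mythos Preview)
Your proposal is correct and follows essentially the same route as the paper: both arguments rest on the Vieta identities $(\alpha^2-\beta^2)^2=u^2-4w$, $(\alpha\pm\beta)^2=-u\pm2\omega$, and $w=(\alpha\beta)^2$ extracted in the proof of Lemma~\ref{irred}, and both reduce the Galois condition to the membership $\beta\in L$ via $\beta=\omega/\alpha$. The only cosmetic difference is that the paper packages the argument as the chain $1\Rightarrow 2\Rightarrow 3$ together with $3\Rightarrow 1$, whereas you organize it around the single pivot $\beta\in L$; your remark about the sign discrepancy between $u\mp 2\omega$ in statement~(2) and $-u\pm 2\omega$ in Lemma~\ref{irred} is on point.
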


    \begin{proof}
    Under the present assumptions, $L/F$ is separable, therefore $L/F$ Galois is equivalent to $L$ is the
    splitting field of $P(X)$ over $F$.

        $1. \implies 2. \implies 3.$
        In the proof of the previous Lemma, we see that any of the factorisation $(a)$, $(b)$ and $(c)$ are all valid in
        the splitting field.
        Therefore $u^2-4w, -u+2\omega ,-u-2\omega  \in L^2$. In particular, we have $\omega  \in L$. So that $w \in L^2$

        $3. \implies 1.$  Let $\{\pm \alpha, \pm \beta\}$ be the roots $P(X)$ in the splitting field of $P(X)$, then
        $P(X)$ factors as $P(X)=(X^2-\alpha^2)(X^2-\beta^2)=X^4-(\alpha^2+\beta^2)X^2+(\alpha\beta)^2$.
        Now $w=(\alpha\beta)^2 \in L^2$ implies $\alpha\beta \in L$. So that, $\beta \in L$ since $\alpha \in L$ by assumption.  So $\{\pm \alpha, \pm \beta\}
        \subset L$ which proves $L$ is the splitting field of $P(X)$ over $F$. Hence, $L/F$ is Galois. 
    \end{proof}

    \begin{remark}\label{galois_rmk}
        Let $L/F$ be a biquadratic non-galois quartic extension, $\alpha \in L$ be a biquadratic generator with minimal polynomial
        $P(X)=X^4+uX^2+w$ over $F$. Then
        \begin{enumerate}
            \item $L^{Gal}=L(\omega)$ where $\omega \in \overline{F}$ and $\omega^2=w$, this is because the roots
                of $P(X)$ in $L^{Gal}$ are of the form $\pm \alpha, \pm \beta$ so $w=(\alpha\beta)^2 \in (L^{Gal})^2$,
                but $w \not\in L^2$ by the previous Lemma.
            \item  $Aut(L^{Gal}/F) \cong D_8$.
                This is because $[L^{Gal}:F]=[L^{Gal}:L][L:F]=8$,
                So $Aut(L^{Gal}/F)$ has $8$ elements, but the only transitive subgroup of $A_4$ of order $8$ is $D_8$.
                We know $Aut(L^{Gal}/F)$ is a transitive subgroup of $S_4$ by \cite[Theorem 2.9]{Conrad3}.
        \end{enumerate}
    \end{remark}

    The next result is well-known.  Since we could not find a reference that would prove the result in the present form, we decide to include the
    proof for completeness.
    This charactarization is useful in distinguishing between elementary abelian and non-elementary abelian extension
    in the classification of these extensions.
    \begin{lemma} \label{galois_closure}
        Let $L/F$ be a biquadratic field extension, $\alpha \in L$ be a primitive element with minimal polynomial
        $P=X^4+uX^2+w$, then $Aut(L/F)$ isomorphic to
        \begin{enumerate}
            \item  $\mathbb{Z}/2\mathbb{Z} \times \mathbb{Z}/2\mathbb{Z}$ if and only if $w \in F^2$
            \item  $\mathbb{Z}/4\mathbb{Z}$ if and only if $w \not\in F^2$ and $w(u^2-4w) \in F^2$
            \item  $\mathbb{Z}/2\mathbb{Z}$ if and only if $w \notin F^2$ and $w(u^2-4w) \notin F^2$
        \end{enumerate}
    \end{lemma}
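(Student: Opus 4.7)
The plan is to determine $|Aut(L/F)|$ by counting which roots of $P$ lie in $L$, and within the case $|Aut(L/F)|=4$ to detect whether there is an element of order $4$. The roots of $P$ in $\overline{F}$ are $\pm\alpha,\pm\beta$, and with $\omega:=\alpha\beta$ we have $\omega^2=w$ and $\beta=\omega/\alpha$. Since $L=F(\alpha)$, every $F$-automorphism is determined by the image of $\alpha$, which must be a root of $P$ lying in $L$. The roots $\pm\alpha$ always lie in $L$, while $\pm\beta$ lie in $L$ precisely when $\omega\in L$. Hence $|Aut(L/F)|=2$ if $\omega\notin L$ and $|Aut(L/F)|=4$ if $\omega\in L$.

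The key step is to show that $\omega\in L$ if and only if $w\in F^2$ or $w(u^2-4w)\in F^2$. By Remark \ref{inter_rmk}, the quadratic subfield $F(\alpha^2)$ equals $F(\sqrt{u^2-4w})$, and by Lemma \ref{irred} the irreducibility of $P$ gives $u^2-4w\notin F^2$. The $\Leftarrow$ direction then follows immediately: if $w\in F^2$ then $\omega\in F\subseteq L$, while if $w(u^2-4w)\in F^2$, then Lemma \ref{quad} identifies $F(\omega)$ with $F(\alpha^2)\subseteq L$. For $\Rightarrow$, I would use the automorphism $\sigma_{-\alpha}\colon\alpha\mapsto-\alpha$, whose fixed field is $F(\alpha^2)$, and argue $\sigma_{-\alpha}(\omega)=\omega$ by contradiction. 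Since $\omega^2=w\in F$ we have $\sigma_{-\alpha}(\omega)=\pm\omega$; if $\sigma_{-\alpha}(\omega)=-\omega$ then $\alpha\omega$ is $\sigma_{-\alpha}$-fixed, so $\alpha\omega=p+q\alpha^2$ for some $p,q\in F$, and squaring together with the relation $\alpha^4=-u\alpha^2-w$ and comparing coefficients yields $p^2=q^2w$ with $q\neq 0$, forcing $w\in F^2$; but then $\omega\in F$ and $\sigma_{-\alpha}(\omega)=\omega$, contradicting the hypothesis. Thus $\omega\in F(\alpha^2)=F(\sqrt{u^2-4w})$, and one more application of Lemma \ref{quad} finishes the claim. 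This immediately proves item $3$: $Aut(L/F)\cong\mathbb{Z}/2\mathbb{Z}$ iff $\omega\notin L$ iff $w\notin F^2$ and $w(u^2-4w)\notin F^2$.

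To distinguish items $1$ and $2$ within the case $\omega\in L$, I analyse the order of $\sigma_\beta\colon\alpha\mapsto\beta$. One computes $\sigma_\beta^2(\alpha)=\sigma_\beta(\omega)/\beta$, and $\sigma_\beta(\omega)=\pm\omega$; so $\sigma_\beta$ has order $2$ when $\sigma_\beta(\omega)=\omega$ (giving $\sigma_\beta^2(\alpha)=\alpha$) and order $4$ when $\sigma_\beta(\omega)=-\omega$ (giving $\sigma_\beta^2(\alpha)=-\alpha$). If $w\in F^2$ then $\omega\in F$ is fixed by every $F$-automorphism, so $\sigma_\beta$ has order $2$; the same applies to $\sigma_{-\beta}$, and thus every non-identity element has order $2$, giving $Aut(L/F)\cong\mathbb{Z}/2\mathbb{Z}\times\mathbb{Z}/2\mathbb{Z}$. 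If $w\notin F^2$ and $w(u^2-4w)\in F^2$, I write $\omega=c+d\alpha^2\in F(\alpha^2)$ with $d\neq 0$; the condition $\omega^2=w\in F$ kills the coefficient of $\alpha^2$ and forces $2c=du$, and then $\sigma_\beta(\omega)=c+d\beta^2=c+d(-u-\alpha^2)=(c-du)-d\alpha^2=-\omega$, so $\sigma_\beta$ has order $4$ and $Aut(L/F)\cong\mathbb{Z}/4\mathbb{Z}$.

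The main technical obstacle is the contradiction argument that pins down $\sigma_{-\alpha}(\omega)=\omega$ in the second paragraph, since this is what converts the abstract containment $\omega\in L$ into the explicit arithmetic criterion stated in the lemma; the remaining steps are direct automorphism computations using the relation $\alpha^4=-u\alpha^2-w$ and Lemma \ref{quad}.
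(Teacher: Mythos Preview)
Your proof is correct and takes a genuinely different route from the paper's. The paper argues structurally: for item~1 it invokes the discriminant criterion (the Galois group lies in $A_4$ iff $\Delta(P)=16w(u^2-4w)^2\in F^2$, forcing $w\in F^2$) and, for the converse, appeals to Lemma~\ref{galois} to get $L/F$ Galois and then exhibits two distinct quadratic subfields $F(\sqrt{u^2-4w})$ and $F(\sqrt{-u-2\omega})$ to rule out $\mathbb{Z}/4\mathbb{Z}$; item~2 is handled via the Galois correspondence by noting a cyclic group has a unique index-$2$ subgroup, hence a unique quadratic subfield, which forces $F(\sqrt{w})=F(\sqrt{u^2-4w})$; item~3 is by exclusion. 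By contrast, you count automorphisms directly by asking which roots of $P$ lie in $L$, reduce the question to whether $\omega=\alpha\beta\in L$, and then pin down $\omega\in F(\alpha^2)$ via the neat contradiction with $\sigma_{-\alpha}$; the $\mathbb{Z}/4\mathbb{Z}$ vs.\ $\mathbb{Z}/2\mathbb{Z}\times\mathbb{Z}/2\mathbb{Z}$ split is then decided by an explicit order computation for $\sigma_\beta$. Your approach is more elementary and self-contained---it avoids both the external discriminant-in-$A_n$ reference and Lemma~\ref{galois}---at the cost of a slightly ad~hoc but perfectly valid computation; the paper's approach is more conceptual and foreshadows the subfield-counting viewpoint used later in Theorem~\ref{abelian-extension-properties}.
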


    \begin{proof}
        \begin{enumerate}
            \item If $Aut(L/F) \cong \mathbb{Z}/2\mathbb{Z} \times \mathbb{Z}/2\mathbb{Z}$ then the discriminant of $P(X)$
                is a square in $F$ by \cite[Theorem 4.7]{Conrad3} since $\mathbb{Z}/2\mathbb{Z} \times \mathbb{Z}/2\mathbb{Z}$
                is a subgroup of $A_4$, that is $\Delta(P(X))=16w(u^2-4w)^2 \in F^2$ hence
                $w \in F^2$.
                Conversely, let $w = \omega^2$ for some $\omega \in F$.
                By Lemma \ref{galois} $L/F$ is Galois i.e
                $|Aut(L/F)|=[L:F]=4$, therefore $L/F$ is isomorphic to either
                $\mathbb{Z}/2\mathbb{Z} \times \mathbb{Z}/2\mathbb{Z}$ or $\mathbb{Z}/4\mathbb{Z}$.
                By Lemma \ref{galois} there exist $\gamma,\delta \in L$ such that
                $\gamma^2 = u^2-4w$ and $\delta^2=-u-2\omega$, in addition $F(\gamma)$ and $F(\delta)$ are quadratic
                sub-extensions of $L/F$.
                Those two quadratic extensions are distinct since otherwise $\frac{u^2-2w}{-u-2\omega}=-u+2\omega \in F^2$
                by Lemma \ref{quad} and this would contradict the irreducibility of $P(X)$ over $F$ by Lemma \ref{irred}.
                Since there is a one-to-one correspondence between quadratic sub-extensions of $L/F$ and subgroups
                $Aut(L/F)$ of degree $2$ it follows that $Aut(L/F)$ has at least two subgroups of order $2$ hence
                $Aut(L/F) \cong \mathbb{Z}/2\mathbb{Z} \times \mathbb{Z}/2\mathbb{Z}$.

            \item We suppose $Aut(L/F) \cong \mathbb{Z}/4\mathbb{Z}$ then $L/F$ is Galois and $w \not\in F^2$ by $1.$.
                $u^2-4w \not\in F^2$ by Lemma \ref{irred} and by Lemma \ref{galois} there exist $\gamma, \delta \in L$
                such that $\gamma^2=w$ and $\delta^2=u^2-4w$ so $F(\gamma)$ and $F(\delta)$ are quadratic
                sub-extensions of $L/F$.
                Since quadratic sub-extensions correspond with subgroups of $Gal(L/F)$ of degree $2$ and $Aut(L/F)$ has
                $1$ subgroup of order $2$ it follows that $F(\delta)=F(\gamma)$.
                As a consequence, $u^2-4w=a^{2}w$ for some $a \in F$ by Lemma \ref{quad}, this implies $w(u^2-4w)=(aw)^2 \in F^2$
                as desired.
                Conversely, let $w(u^2-4w) \in F^2$ and $w \not\in F^2$, we want to show that $w \in L^2$ which from
                Lemma \ref{galois} and $1.$ will prove the result.
                This follows from $u^2-4w \in L^2$ since it is the discriminant of $Q(X)=X^2+uX+w$ with $\alpha^2\in L$ as root. 
                Now that $w(u^2-4w),u^2-4w \in L^2$ it follows that $w \in L^2$.
            \item If $Aut(L/G) \cong \mathbb{Z}/2\mathbb{Z}$, then $w \not\in F^2$ by $1.$ and $w(u^2-4w) \not\in F^2$
                 by $2.$ Conversely if $u(u^2-4w) \not\in F^2$ and $w \not\in F^2$ then $L/F$ is not Galois by $1.$ and $2.$
                It then follows that $Aut(L/F)\cong \mathbb{Z}/2\mathbb{Z}$ or $Aut(L/F)$ is the trivial group.
                The automorphism $\sigma:\alpha \rightarrow -\alpha$ fixes $F$ and is not trivial hence
                $Aut(L/F)$ is not trivial. Therefore, $ Aut(L/F) \cong \mathbb{Z}/2\mathbb{Z}$

        \end{enumerate}

    \end{proof}

    \section{Elementary abelian fields}\label{sec:elementary-abelian-extension}
    
\subsection{Elementary abelian extensions}\label{subsec:elementary-abelain-extensions}
The following result characterize elementary abelian extensions via number of quadratic sub-extensions and description of the minimal polynomial.
\begin{theorem}\label{abelian-extension-properties}
Let $L/F$ be a quartic field extension, the following are equivalent
\begin{enumerate}
    \item $L/F$ is Galois and $Gal(L/F)=\mathbb{Z}/2\mathbb{Z} \times \mathbb{Z}/2\mathbb{Z}$.
    \item $L/F$ has exactly $3$ quadratic sub-extensions.
    \item $L/F$ has more than $1$ quadratic sub-extension.
    \item there exists $\gamma,\delta \in L$ such that $\gamma^2=a$ and $\delta^2=b$, $F(\gamma)$, $F(\delta)$ are quadratic extensions of $F$, $F(\gamma) \neq F(\delta)$ and $L = F(\gamma, \delta)$.
    \item $L/F$ has a biquadratic generator has with minimal polynomial of the $P(X)=X^4-2(a+b)X^2+(a-b)^2$ over $F$ for unique $a,b \in F$. 
    \item Any biquadratic generator of $L/F$ has a minimal polynomial has the form $$P(X)=X^4-2(a+b)X^2+ (a-b)^2$$ over $F$ for unique $a,b \in F$.
\end{enumerate}

Note that assuming $5.$ or $6.$ guarantee that there exists $\gamma,\delta \in L$ such that $\gamma^2=a$ and $\delta^2=b$, moreover
$F(\gamma)$, $F(\delta)$ are quadratic extensions of $F$, $F(\gamma) \neq F(\delta)$, that is $a/b\notin F^2$ and $L = F(\gamma, \delta)$.
Also $\alpha+\beta$ is a primitive element of $L/F$ with minimal polynomial $P(X)$.
\end{theorem}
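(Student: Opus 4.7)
The plan is to close the cycle $1 \Rightarrow 2 \Rightarrow 3 \Rightarrow 4 \Rightarrow 5 \Rightarrow 6 \Rightarrow 1$, with the heart of the argument being the explicit construction of a biquadratic generator from a pair of quadratic subextensions, and routine applications of Lemma \ref{galois_closure} for the last segment.

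The implications $1 \Rightarrow 2$ and $2 \Rightarrow 3$ are immediate: $\mathbb{Z}/2\mathbb{Z} \times \mathbb{Z}/2\mathbb{Z}$ has exactly three subgroups of order $2$, which under the Galois correspondence yield exactly three intermediate quadratic subfields, and having exactly $3$ trivially implies having more than $1$. For $3 \Rightarrow 4$, I pick two distinct quadratic subextensions and write them as $F(\gamma)$, $F(\delta)$ with $\gamma^2 = a \in F^\times$ and $\delta^2 = b \in F^\times$ (possible since the characteristic is not $2$); then $F(\gamma, \delta) \subseteq L$ properly contains $F(\gamma) \neq F(\delta)$, so its degree over $F$ exceeds $2$ while dividing $4$, forcing $F(\gamma,\delta) = L$.

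The most substantive step is $4 \Rightarrow 5$. Set $\alpha := \gamma + \delta$. Since $F(\gamma) \neq F(\delta)$, one has $\delta \notin F(\gamma)$, so $\alpha \neq 0$. Computing $\alpha^2 = a + b + 2\gamma\delta$ shows $\gamma\delta \in F(\alpha)$, and then $\alpha\gamma = \gamma^2 + \gamma\delta = a + \gamma\delta$ combined with $\alpha \neq 0$ gives $\gamma \in F(\alpha)$, whence $\delta \in F(\alpha)$ and $F(\alpha) = L$. Squaring $\alpha^2 - (a+b) = 2\gamma\delta$ and using $(\gamma\delta)^2 = ab$ yields
\[ \alpha^4 - 2(a+b)\alpha^2 + (a-b)^2 = 0. \]
Since this monic polynomial has degree $4 = [L:F]$, it is the minimal polynomial of $\alpha$ over $F$. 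Uniqueness of the pair $(a,b)$ up to interchange is immediate: the coefficients determine $a + b = -u/2$ and $(a-b)^2$ equals the constant term, hence the unordered pair $\{a,b\}$.

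For $5 \Rightarrow 6$ and $6 \Rightarrow 1$, I invoke Lemma \ref{galois_closure}. Existence of any biquadratic generator with minimal polynomial $X^4 - 2(a+b)X^2 + (a-b)^2$ means the constant term lies in $F^2$, so by part $1$ of Lemma \ref{galois_closure} the extension is Galois with $Gal(L/F) \cong \mathbb{Z}/2\mathbb{Z} \times \mathbb{Z}/2\mathbb{Z}$, closing the cycle at $1$. Conversely, once $L/F$ is elementary abelian, any biquadratic generator has minimal polynomial $X^4 + uX^2 + w$ with $w \in F^2$ by the same lemma; writing $w = c^2$ and setting $a := (c - u/2)/2$, $b := -(c + u/2)/2$ gives $-2(a+b) = u$ and $(a-b)^2 = c^2 = w$, yielding the form of $5$ for this new generator, hence $5 \Rightarrow 6$. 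The main obstacle is the primitivity argument for $\gamma + \delta$ in $4 \Rightarrow 5$; every other step is either the Galois correspondence or a direct invocation of Lemma \ref{galois_closure}.
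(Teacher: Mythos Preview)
Your proof is correct and follows essentially the same architecture as the paper's: the same chain of implications, the Galois correspondence for $1\Rightarrow 2$, the compositum argument for $3\Rightarrow 4$, and the appeal to Lemma~\ref{galois_closure} to pass between statements $5$, $6$, and $1$. The one substantive difference lies in $4\Rightarrow 5$. The paper verifies that $P(X)=X^4-2(a+b)X^2+(a-b)^2$ is irreducible by exhibiting the four roots $\pm(\gamma\pm\delta)$ and checking the three conditions of Lemma~\ref{irred} (namely $\alpha^2\notin F$, $\alpha\pm\beta\notin F$). You instead prove directly that $\gamma+\delta$ is primitive, by observing $\gamma\delta\in F(\gamma+\delta)$ from the square and then $\gamma=(a+\gamma\delta)/(\gamma+\delta)\in F(\gamma+\delta)$; once $F(\gamma+\delta)=L$, any monic degree-$4$ relation is automatically the minimal polynomial. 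Your argument is slightly more self-contained at this step since it bypasses the irreducibility criterion, while the paper's version has the side benefit of identifying all four roots explicitly, which feeds into the remark following the theorem.
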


\begin{proof} We will prove that
    $$1. \implies 2. \implies 3. \implies 4. \implies 5. \implies 1. \implies 6.\implies 1.$$

    $1. \implies 2.$ $L/F$ is Galois hence there is a one-to-one correspondence between quadratic sub-extensions of
    $L/F$ and subgroups of $\mathbb{Z}/2\mathbb{Z} \times \mathbb{Z}/2\mathbb{Z}$ of order $2$, so $L/F$ has $3$
    quadratic sub-extensions.\\
    $2. \implies 3.$ is trivial\\
    $3. \implies 4$ Let $F(\gamma)$ and $F(\delta)$ be distinct quadratic subextensions of $L/F$ such that
    $\gamma^2 \in F$ and $\delta^2 \in F$. 
   Then $[F(\gamma,\delta):F]
   =4$ concluding the proof.\\
    $4. \implies 5$ Let $F(\gamma)$ and $F(\delta)$ be distinct quadratic sub-extensions of $L/F$, We will prove $P(X)=X^4-2(a+b)X^2+(a-b)^2$
    is the minimal polynomial of $\gamma+\delta$.
    Indeed, firstly, by completing the square, we obtain $P(\gamma + \delta )=((\gamma + \delta)^2 - ( \gamma^2 + \delta^2))^2 - (\gamma^2 + \delta^2)^2 +(\gamma^2-\delta^2)^2=0$ and we will prove that $P(X)$ is irreducible over $F$.
    We have similarly that $\gamma-\delta$ is also a root in $P(X)$ in $L$.
    Therefore, the roots of $P(X)$ are $\pm \alpha$, $\pm \beta$ where $\alpha = \gamma + \delta$ and $\beta = \gamma-\delta$.
    But $\alpha^2 = a+b +2 \gamma \delta \not\in F$ because 
    $\frac{\gamma}{\delta} \notin F$. 
    Moreover, neither $ \alpha+ \beta= 2\gamma$, nor $\alpha -\beta = 2 \delta$ are in $F$,
    since $F(\gamma)$ and $F(\delta)$ are quadratic extensions of $F$.\\
    $5. \implies 1.$ Follows from Lemma \ref{galois_closure} \\
    $1. \implies 6.$ We assume that $L/F$ is Galois with $Gal(L/F)=\mathbb{Z}/2\mathbb{Z} \times \mathbb{Z}/2\mathbb{Z}$.
    Let $\alpha \in L$ be a biquadratic generator and $P(X)$ be the minimal polynomial of $\alpha$ over $F$,
    then by Lemma \ref{galois_closure}, $P(X)=X^4+uX^2+w$ for some $u,w \in F$ with $w= \omega^2$ for some $\omega \in F$.
    Let  $a=\frac{1}{4}(2\omega-u)$ and $b=-\frac{1}{4}(2\omega+u)$  then $P(X)=X^4-2(a+b)X^2+ (a-b)^2$.
    Note that $a,b \not\in F^2$ by Lemma \ref{irred} and $a,b \in L^2$ since $L/F$ is a splitting field of $P(X)$,
    that is there exists $\gamma,\delta \in L$ such that $\gamma^2=a$ and $\delta^2=  b$.
    By lemma \ref{quad}  $F(\gamma)=F(\delta)$ if and only if $a,b \in F^2$, hence to show that $F(\gamma) \neq F(\beta)$
    it suffices to show that $a/b \not\in F^2$.
    On the contrary suppose $a=k^{2}b$ where $k \in F$, so $u=-2(a+b)=-2b(k^2+1)$ and $w=(a-b)^2= b^2(k^2-1)$ then
    $u^2-4w=(4bk)^2 \in F^2$ contradicting that $P(X)$ is irreducible over $F$, so $F(\gamma) \neq F(\delta)$
    as required.\\
    $6. \implies 1.$ Follows from Lemma \ref{galois_closure}

\end{proof}
\begin{remark}
Note that, if $F$ is a field and $P(X)=X^4-2(a+b)X^2+(a-b)^2 \in F[X]$ as in the previous theorem then $P(X)$ is irreducible over $F$ if and only
    $a \not\in F^2$ and $b \not\in F^2$ and $ab \not\in F^2$ by Lemma \ref{irred}. Moreover, the roots of this polynomial are 
    $\epsilon_1 \gamma  + \epsilon_2 \delta$, where $\alpha$ (resp. $\gamma$) is such that $\gamma^2 =a$ (resp. $\delta^2 = b$) and $\epsilon_i \in \{ \pm 1 \}$, $i =1,2$. 
    Also, the possible quadratic factorisation of of $P(X)$ are now  
    $\left( X^2-a - b - 2 \gamma \delta  \right) \left( X^2- a -b +2 \gamma \delta \right) $, 
    $\big(X^2-2\gamma X +a-b \big)\big(X^2+2 \gamma X +a -b \big) $
    and $ \big(X^2-2 \delta X - a+b \big)\big(X^2+ 2 \delta X -a +b \big)$.
\end{remark}

        A nice application of the previous theorem, describe radical elementary abelian extensions.  Part of the following result is also part of \cite[Theorem 6.1.]{WQSQuartics}. We start with a remark we will use multiple times in the proof.

    \begin{remark}\label{rad_min_poly_remains_rmk}
        Let $L/F$ be an elementary abelian extension, if $K$ is a radical closure of $L/F$ then $L.K/K$ is an elementary
        abelian extension, if $\alpha$ is a biquadratic generator of $L/F$ then $L.K/K=K(\alpha)$ and
        $min(\alpha,F)=min(\alpha,K)$ where the second equality follows because $min(\alpha,F),min(\alpha,K)$ are both
        degree $4$ monic elements of $K[X]$ having $\alpha$ as a root.
    \end{remark}

    \begin{theorem}\label{elem_radical}
        Let $L/F$ be a elementary abelian extension then $L/F$ has
        \begin{enumerate}
            \item a trivial radical closure if and only if  $F(i)$ is a quadratic sub-extension of $L/F$. In this case $L/F$ is radical, and any radical generator is of the form $\gamma(i + 1)$ where $\gamma \in L$ is a radical generator of a quadratic sub-extension of $L/F$ and $F(\gamma) \neq F(i)$. 
            \item no radical closure if and only if $i \in F$;
            \item exactly $3$ radical closures if and only if $i \not\in L$.
              In this case, the $3$ radical closure are precisely $F(\gamma i)$ where $\gamma$ is a radical generator of a quadratic subextension of  $L/F$.
        \end{enumerate}
    \end{theorem}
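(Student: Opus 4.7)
By Theorem \ref{abelian-extension-properties} write $L=F(\gamma,\delta)$ with $\gamma^{2}=a$, $\delta^{2}=b$, and $a,b,ab\notin F^{\times 2}$, so that the three quadratic sub-extensions of $L/F$ are $F(\gamma)$, $F(\delta)$, and $F(\gamma\delta)$. The plan is to prove case (1) first as the crux, and then deduce cases (2) and (3) by applying case (1) to $LK/K$ for any candidate radical closure $K/F$, where Remarks \ref{remark_rmk} and \ref{rad_min_poly_remains_rmk} guarantee that $LK/K$ is elementary abelian of degree $4$.

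For ($\Leftarrow$) in case (1), assume $F(i)$ is a quadratic sub-extension and choose $\gamma$ with $F(\gamma)\neq F(i)$ so that $L=F(\gamma,i)$. Setting $\theta:=\gamma(i+1)$, direct computation gives $\theta^{2}=2ia$ and $\theta^{4}=-4a^{2}\in F$; from $\theta$ one recovers $i=\theta^{2}/(2a)$ and then $\gamma=\theta(1-i)/2$, yielding $F(\theta)=L$ and showing $L/F$ is itself radical. For ($\Rightarrow$), if $\theta$ is a radical generator of $L/F$ with $\theta^{4}=c\in F$, then $L$ is the splitting field of $X^{4}-c$, whose roots $\pm\theta,\pm i\theta$ all lie in $L$, forcing $i\in L$; and $i\in F$ would make $L/F$ cyclic by Lemma \ref{galois_closure}, contradicting elementary abelianness. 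For the form claim, $F(\theta^{2})$ is a quadratic sub-extension, and I argue $F(\theta^{2})=F(i)$: otherwise $F(\theta^{2})=F(\sqrt{d})$ with $d,-d\notin F^{\times 2}$ gives $\theta^{2}=f\sqrt{d}$ and $c=f^{2}d$, so $F(\theta)\supseteq F(i,\sqrt{c})$ with both sides of degree $4$ over $F$; but a coordinate expansion of $(\alpha+\beta\sqrt{c}+\gamma'i+\delta'i\sqrt{c})^{2}$ shows $\sqrt{c}\notin F(i,\sqrt{c})^{\times 2}$, contradicting $\theta\in F(i,\sqrt{c})$ with $\theta^{2}=f\sqrt{c}$. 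Hence $\theta^{2}=\beta i$ for some $\beta\in F^{\times}$, and $\gamma':=\theta(1-i)/2$ satisfies $(\gamma')^{2}=\beta/2\in F$, with $F(\gamma')\neq F(i)$ (else $\theta=\gamma'(i+1)\in F(i)$ would have degree at most $2$).

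Case (2) follows immediately: if $i\in F$ and $K/F$ were a radical closure, case (1) applied to $LK/K$ would force $K(i)$ to be a proper quadratic sub-extension of $LK/K$, impossible since $i\in F\subseteq K$; conversely, if $i\notin F$, either $i\in L$ (case (1) gives a trivial closure) or $i\notin L$ (case (3) below furnishes closures). For case (3), assume $i\notin L$. For existence, given a radical generator $\gamma$ of a quadratic sub-extension of $L/F$, set $K:=F(\gamma i)$; since $-a\notin F^{\times 2}$ (else $F(\gamma)=F(i)\subset L$), $[K:F]=2$ and $K\cap L=F$; then $LK=L(i)$ is elementary abelian of degree $4$ over $K$, with $K(i)=F(\gamma,i)$ a proper quadratic sub-extension, so case (1) makes $LK/K$ radical and $K$ a radical closure (minimality uses that $L/F$ is not itself radical, again by case (1)). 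The three such $K$ are distinct by Lemma \ref{quad}, since the ratios $a/b,a/(ab),b/(ab)$ lie outside $F^{\times 2}$. For uniqueness, any radical closure $K$ has $[K:F]=2$ and $K=F(\sqrt{c})$ for some $c\in F^{\times}\setminus F^{\times 2}$; case (1) applied to $LK/K$ gives $i\in LK\setminus K$, and writing $i=p+q\sqrt{c}$ with $p,q\in L$ forces $p=0$ and $-c=q^{-2}\in L^{\times 2}\cap F^{\times}$. A direct computation on the $F$-basis $\{1,\gamma,\delta,\gamma\delta\}$ of $L$ yields
\[
L^{\times 2}\cap F^{\times}=F^{\times 2}\cup aF^{\times 2}\cup bF^{\times 2}\cup abF^{\times 2};
\]
excluding $-c\in F^{\times 2}$ (which gives $K=F(i)\ni i$), the remaining three cosets identify $K$ with one of $F(\gamma i),F(\delta i),F(\gamma\delta i)$.

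The two most delicate points are (i) the identification $F(\theta^{2})=F(i)$ in case (1) and (ii) the computation of $L^{\times 2}\cap F^{\times}$ in the uniqueness half of case (3); both reduce to the elementary fact that if $x=e+f\gamma+g\delta+h\gamma\delta\in L$ has $x^{2}\in F$, then vanishing of the $\gamma,\delta,\gamma\delta$-coefficients of $x^{2}$ combined with $a,b,ab\notin F^{\times 2}$ forces all but one of $e,f,g,h$ to vanish. The case analysis on the number of nonzero coordinates is routine but must be done carefully to rule out spurious solutions.
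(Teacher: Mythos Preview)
Your proof is correct and follows the same architecture as the paper's: establish case (1) as the key lemma, then apply it over $LK/K$ for cases (2) and (3). The two places where your execution differs are worth noting. For the form claim in (1), the paper uses the $(a,b)$-parametrization of Theorem~\ref{abelian-extension-properties}: writing $X^4+w^2=X^4-2(a+b)X^2+(a-b)^2$ forces $a+b=0$, so $w=2a=2\gamma^2$, and factoring $\beta^4+4a^2=(\beta^2-2ia)(\beta^2+2ia)$ gives $\beta=\pm\gamma(1\pm i)$ directly, bypassing your coordinate computation. For uniqueness in (3), the paper observes that the three quadratic sub-extensions of $LM/M$ are exactly $M(\gamma),M(\delta),M(\gamma\delta)$, and since by (1) $M(i)$ is one of them, Lemma~\ref{quad} gives $i\theta\in M$ for some $\theta\in\{\gamma,\delta,\gamma\delta\}$, whence $M=F(i\theta)$ by minimality; this is shorter than going through $L^{\times 2}\cap F^\times$, though both arguments encode the same Kummer-theoretic content.

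One correction: in $(\Rightarrow)$ of (1), the sentence ``$i\in F$ would make $L/F$ cyclic'' is not the right contradiction. Since $L/F$ is assumed elementary abelian, Lemma~\ref{galois_closure} already gives $-c\in F^2$; if in addition $i\in F$, then $c=(i\sqrt{-c})^2\in F^2$ and $X^4-c$ is \emph{reducible} --- that is the actual contradiction. (The paper phrases this via Lemma~\ref{irred}: irreducibility forces $u^2-4w=4c=-4\omega^2\notin F^2$, hence $-1\notin F^2$.) Two smaller points: in your form-claim argument ``$\theta^2=f\sqrt{c}$'' should read ``$\theta^2=\pm\sqrt{c}$'' (since $c=f^2d$); and your closing remark that (i) reduces to the same ``$x^2\in F$'' fact as (ii) is a slight overstatement --- that fact only pins $\theta^2$ down to $F^\times\cdot\{1,i,\sqrt{d_1},\sqrt{d_2}\}$, and ruling out the non-$i$ cases still requires the separate expansion you sketched in the body.
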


    \begin{proof}
        \begin{enumerate}
            \item Let $L/F$ be a radical extension and $\alpha \in L$ be a radical generator of $L/F$, then
                 $min(\alpha,F)=P(X)=X^4+w$ where $w=\omega^2$ for some $\omega \in F$.
                Moreover, $P(X)$ is irreducible over $F$ hence $-4w=-4\omega^2 \not\in F^2$ by Lemma \ref{irred} this implies
                $-1 \not\in F^2$ hence $i \not\in F$.
                By Remark \ref{inter_rmk}, $-4w=-4\omega^2 \in L^2$ hence $i \in L$ so indeed $F(i)$ is a quadratic
                sub-extension of $L/F$.

                Conversely, let $F(i)$ be a quadratic sub-extension of $L/F$. Pick $F(\gamma)$ a quadratic sub-extension
                of $L/F$ different from $F(i)$ such that $\delta^2=a \in F$, then
                $-a \not\in F^2$ by Lemma \ref{quad}.
                We claim that $\delta(1+i) \in L$ is a radical generator for $L/F$ with minimal polynomial $Q(X)=X^4+4a^2$.
                First, $\delta(1+i) \in L$ is a root of $Q(X)$ since $Q(\delta(1+i))=(\delta(1+i))^4+(2a)^2=0$.
                Also, we prove that $Q(X)$ is irreducible. For this we apply Lemma \ref{irred} for $Q(X)$ so that $u=0$ and
                $w= 4a^2$ and therefore $u^2-4w=-(4a)^2 \not\in F^2$ as
                $-1 \not\in F^2$, $-u+2\omega =4a \not\in F^2$ as $a \not\in F^2$ , lastly
                $-u-2\omega =-4a \not\in F^2$ as $-a \not\in F^2$, proving the irreducibility of $Q(X)$ and concluding the proof 					    of the claim.

                We now show that any radical generator 
                is of the form $\gamma(i + 1)$ for some
                $\gamma$ is a radical generator of a quadratic sub-extension of $L/F$.
                Let $min(\beta, F)=X^4+w^2$. We have $w=a-b$ and $0=a+b$ for some $a,b \in F \setminus F^2$. 
            		So that, $w=2a$. By Lemma \ref{abelian-extension-properties} $a=\gamma^2$ for some $\gamma \in L$ such that $							[F(\gamma):F]=2$. Moreover, $\gamma \not\in F(i)$. Indeed, if it was then by Lemma \ref{quad} there will be $c\in F$ such that 
		$a = - c^2 $ but $-a = b$ is not a square in $F$ leading to a contradiction.
                We can rewrite $\beta^4+w^2=0$ as $(\beta^2-2ia)(\beta^2+2ia)=0$. Therefore, $\beta^2= \pm 2ia$ so that $\beta = \gamma ( i \pm 1)$ or $\beta= - \gamma ( i \pm 1)$. Proving the theorem since $\gamma/i$ is a generator of one of the quadratic subextensions of $L/F$ and $ i (i+1)= 1-i$. 

            \item We argue by contradiction. Suppose $i \in F$ and $L/F$ has a radical closure $K$.
                    By 1., $K(i)$ is quadratic extension of $L.K/K$ contradicting that
                    $i \in F \subset K$. Hence such $K$ does not exist, $L/F$ does not have a radical closure.

                 Conversely, we will prove that if $i \not\in F$ then $L/F$ has a radical closure.
                   For this, we suppose $i \not\in F$.  If $i \in L$ then $F$ is the trivial radical closure of $L/F$ and we are done.
                    We can then assume that $i \not\in L$, let $F(\gamma)$ be quadratic sub-extension of $L/F$ such that $\gamma^2 \in F$.
                    We claim that $K:=F(i\gamma)$ is a radical closure of $L/F$.
                    We first show that $L.K/K$ is a quartic extension, 
                    we prove by
                    contradiction that $K \cap L= F$.
                    Suppose $K \subset L$ then  $i\gamma, \gamma \in L$ which implies $i=\frac{i\gamma}{\gamma} \in L$,
                    contradicting that $L/F$ is not radical so indeed $K \cap L = F$ hence $L.K/K$ is a quartic extension.
                    It remains to show that $L.K/K$ is a radical extension, for this it suffices to show $K(i)$ is
                    quadratic sub-extension of $L.K/K$.
                    Note that $i \not\in K$, otherwise of $i \in K$ then $K=F(i)=F(i\gamma)$ and this implies
                    $\gamma \in F$ by Lemma \ref{quad} contradicting that $F(\gamma)$ is quadratic sub-extension of $L/F$.
                    We have $\gamma, i\gamma \in L.K$ hence $i \in L.K$ this implies $K(i)$ is indeed a quadratic
                    sub-extension of $L.K/K$, this proves $L.K/K$ is indeed a radical extension.

            \item  The "if direction" is a clear consequence of $1.$ and $2.$. We suppose that  $i \not\in L$. Since $ i \not\in F$, from the proof of $2.$, we know that
            $F(i\gamma)$ is a radical closure of $L/F$ whenever $F(\gamma)$ is a quadratic sub-extension of $L/F$.
            When $F(\gamma)$, $F(\delta)$  and $F(\gamma \delta)$ are distinct quadratic sub-extensions of $L/F$ then
            $F(i\gamma )$, $F(i\delta)$ and $F(i\gamma \delta)$ are also distincts. We can prove the latter as usual using Lemma \ref{quad}.  
Therefore, $L/F$ has at least $3$ radical closures. It remains to prove that given $M$ a radical closure of $L/F$, $M$ is one of the above.
   One can prove that $M(\gamma),M(\delta) \text{ and } M(\gamma\delta)$ are the quadratic sub-extensions of $L.M/M$.
       Also, by $1.$, since $L.M/M$ is radical, $M(i)$ is quadratic sub-extension of $L.M/M$. Thus, $M(i)=M( \theta )$ where $\theta \in \{ \gamma, \delta , \gamma \delta \}$ which implies $i\theta \in M$ by lemma  \ref{quad}. Hence, $M=F(i\theta )$, by definition of a radical closure. 
       This concludes the proof of $3.$ 
        \end{enumerate}
    \end{proof}

        \begin{remark}
    Note when $F$ is a function field and $L/F$ is a function fields' extension then the previous result state that if an
    extension is geometric and elementary abelian, then it cannot be radical.
    \end{remark}

    \begin{corollary}\label{cor-rad} 
              Let $L/F$ and $L'/F$ be two elementary abelian radical extensions with radical generators $\alpha \in L$
         and $\alpha' \in L'$ with minimal polynomial $P(X)=X^4+a$ and $Q(X)=X^4+a'$ respectively.
        Then, the following statements are equivalent:
        \begin{enumerate}
         \item $L/F$ is $F$-isomorphic to $L'/F$.
         \item $a'= d^4 a^j$ where $j$ is $1$ or $3$ and $d \in F$.
         \item $a'= c^4 a$ where $c \in F$.
         \item $ c\alpha' $ is a radical generator with minimal polynomial $P(X)$ where $c \in F$.
         \item $ d \alpha'^j$ is a radical generator with minimal polynomial $P(X)$ where $j$ is $1$ or $3$ and $d \in F$.
        \end{enumerate}
    \end{corollary}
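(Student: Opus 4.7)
The plan is to prove the cycle $(3) \implies (4) \implies (1) \implies (3)$ together with the equivalences $(4) \Longleftrightarrow (5)$ and $(2) \Longleftrightarrow (3)$. A crucial preliminary observation from Lemma~\ref{galois_closure} is that for $L/F$ (resp.\ $L'/F$) to be elementary abelian with minimal polynomial $X^4+a$ (resp.\ $X^4+a'$), one must have $a, a' \in F^{\times 2}$; writing $a = e_0^2$ and $a' = e^2$ with $e_0, e \in F^\times$ will be used throughout.

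The three routine implications are fourth-power computations: for $(3) \implies (4)$, the element $\alpha'/c$ satisfies $(\alpha'/c)^4 = -a$ and generates $L'$; $(4) \implies (1)$ follows from the universal property of $L = F[X]/(P(X))$, extending $\alpha \mapsto c\alpha'$ to an $F$-isomorphism. The equivalences $(4) \Longleftrightarrow (5)$ and $(2) \Longleftrightarrow (3)$ each reduce the $j=3$ case to the $j=1$ case using $a, a' \in F^{\times 2}$: for instance, from $(5)$ with $j=3$ one has $d^4 (a')^3 = a$, so $a = (de)^4 a'$ and $(de)\alpha'$ is a radical generator with minimal polynomial $P(X)$; similarly, $(2)$ with $j=3$ gives $a' = d^4 a^3 = (de_0)^4 a$, recovering $(3)$.

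The heart of the argument is $(1) \implies (3)$. Given an $F$-isomorphism $\phi: L \to L'$, let $\beta = \phi(\alpha) \in L'$; then $\beta^4 = -a$ and $F(\beta) = L'$. I would consider $\gamma := \beta/\alpha' \in (L')^{\times}$, which satisfies $\gamma^4 = a/a' = (e_0/e)^2 \in F^{\times 2}$, so $\gamma^2 = \pm e_0/e \in F$; thus $F(\gamma)$ is either $F$ or one of the three quadratic subextensions of $L'/F$. After normalising so that $(\alpha')^2 = ei$ (possible because, by Theorem~\ref{elem_radical}.1, $F(i) \subset L'$), a direct computation identifies the three quadratic subextensions as $F(i)$, $F(\alpha'(1+i))$, and $F(\alpha'(1-i))$, with distinct radical classes $-1, -2e, 2e$ in $F^{\times}/F^{\times 2}$ (distinctness following from Lemma~\ref{irred}). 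The sub-cases $\gamma \in F$ and $\gamma = c'i \in F(i)$ both give $\gamma^4 = (c')^4 \in F^{\times 4}$, hence $a = (c')^4 a'$, proving $(3)$. The remaining sub-cases $F(\gamma) = F(\alpha'(1 \pm i))$ would force $\beta = \gamma\alpha' = c'(\alpha')^2 (1 \pm i) = c' ei (1 \pm i) \in F(i)$, contradicting $[F(\beta):F] = 4$, and so are excluded. The main obstacle, I expect, is exactly this final case analysis: identifying the quadratic subextensions of $L'/F$ explicitly and ruling out the two sub-cases in which $\beta$ would collapse into the proper subfield $F(i)$.
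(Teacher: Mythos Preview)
Your proof is correct. The paper argues the linear cycle $1 \Rightarrow 2 \Rightarrow 3 \Rightarrow 4 \Rightarrow 5 \Rightarrow 1$, whereas you prove $3 \Rightarrow 4 \Rightarrow 1 \Rightarrow 3$ together with $2 \Leftrightarrow 3$ and $4 \Leftrightarrow 5$; the routine implications and the reductions using $a, a' \in (F^\times)^2$ match. For the substantive step the paper goes $1 \Rightarrow 2$: it pulls $\alpha'$ back into $L$ via the isomorphism, invokes Theorem~\ref{elem_radical}(1) to write \emph{both} radical generators in the form $\alpha = \gamma_0(1+i)$, $\alpha' = \delta_0(1+i)$ with $F(\gamma_0), F(\delta_0) \neq F(i)$, and then compares $F(\delta_0)$ against the remaining subextensions $F(\gamma_0), F(i\gamma_0)$; the two alternatives yield $j=1$ and $j=3$ respectively. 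Your route via the ratio $\gamma = \phi(\alpha)/\alpha'$ is the same comparison in disguise (after the identification one has $\gamma = \gamma_0/\delta_0$), but because you do not first normalise both generators via Theorem~\ref{elem_radical}(1), you must separately exclude the two sub-cases where $\gamma$ lands in a quadratic subextension other than $F(i)$ --- the paper's normalisation rules these out a priori since $F(\delta_0)\neq F(i)$. Both arguments are of comparable length; yours has the mild advantage of landing directly on (3) without detouring through the $j=3$ case, while the paper's makes the dichotomy in (2) visible.
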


    \begin{proof}

		$1. \implies 2.$ Let $\phi$ be a $F$-isomorphism from $L/F$ to $L'/F$.
		Let  $\alpha' = \phi(\alpha) \in L$. It is then a root of $Q(X)$. By Lemma \ref{elem_radical}  there exists radical generators $\gamma,\delta \in L$ of quadratic sub-extensions of $L/F$ different from $F(i)$ such that
		$\alpha=\gamma(i+1)$ and $\alpha'=\delta(i+1)$.
		$F(i),F(\gamma)$ and $F(i\gamma)$ are the quadratics sub-extensions of $L/F$ hence $F(\delta)$ is one of them i.e $F(\gamma)=F(\delta)$ or $F(i\gamma)=F(\delta)$, in the former we have
		 $\delta=d\gamma$ for some $d\in F$ by Lemma \ref{quad} hence $\alpha'=\delta(i+1)=d\gamma(i+1)=d\alpha$ so $a'=d^{4}a$.
		 When $F(\gamma)=F(i\delta)$  we have $\delta=b i\gamma$ for some $b \in F$  by Lemma \ref{quad} , let $e:=\gamma^2 \in F$.
		 Then $\alpha'=\delta(i+1)=bi\gamma(i+1)=-\frac{d}{2e}(\gamma(i+1))^3=d\alpha^3$ where $d=-\frac{c}{2e}$ hence $a'=d^{4}a^3$. 
		 
		$2. \implies 3.$
		When $j=1$ setting $c=d$ concludes the proof.
		From Lemma \ref{galois_closure} we have know that $a=\omega^2$ for some $\omega \in F$, so when $j=3$ we have $a'=d^{4}a^3=(d\omega)^{4}a$ then setting $c=d\omega$ concludes the proof.

		$3. \implies 4.$ $P(c\alpha' )=(c\alpha')^4+a'=c^4(-a')+a=0$, $c\alpha'$ being a primitive element of $L/F$ follows from the fact that $P(X)$ is irreducible over $F$.

		$4. \implies 5.$ Clear. 

		$5. \implies 1.$ The ring homomorphism sending $d\alpha'^j$ to $\alpha$ and extended by linearity is an $F$-isomorphism.

    \end{proof}

    \begin{lemma}\label{class_by_sub}
        Let $L/F$ be an elementary abelian extension, $F(\gamma)$ and $F(\delta)$ be distinct quadratic sub-extensions of $L/F$.
        If $L'/F$ is quartic extension then $L/F$ and $L'/F$ are $F$-isomorphic if and only $F(\gamma)$ and $F(\delta)$
        are isomorphic to quadratic sub-extensions of $L'/F$.
    \end{lemma}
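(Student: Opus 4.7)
The plan is to use Theorem~\ref{abelian-extension-properties} to reduce both directions to the existence of a biquadratic generator with a prescribed minimal polynomial in each field, and to use Lemma~\ref{quad} to rescale the quadratic generators inside $L'$ so that they match those of $L$ exactly.

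For the forward direction I would start from an $F$-isomorphism $\phi : L \to L'$ and observe that $\phi$ preserves $F$ and the degrees of sub-extensions, so $\phi(F(\gamma))$ and $\phi(F(\delta))$ are quadratic sub-extensions of $L'/F$ that are $F$-isomorphic (via $\phi$ itself) to $F(\gamma)$ and $F(\delta)$ respectively. This is the easy direction.

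For the converse I would first establish that $F(\gamma)$ and $F(\delta)$, being distinct sub-extensions of the same field $L$, are not even abstractly $F$-isomorphic: with $a = \gamma^2$ and $b = \delta^2$, an equality $F(\gamma) = F(\delta)$ inside $L$ would force $\delta = d\gamma$ for some $d \in F^\times$ (since $\delta \in F \oplus F\gamma$ combined with $\delta^2 \in F$ rules out a nonzero constant term), whence $a/b \in (F^\times)^2$, which by Lemma~\ref{quad} is equivalent to $F(\gamma) \cong_F F(\delta)$. Consequently the two quadratic sub-extensions of $L'/F$ assumed isomorphic to $F(\gamma)$ and $F(\delta)$ are themselves distinct, so $L'/F$ has more than one quadratic sub-extension. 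By Theorem~\ref{abelian-extension-properties} (the implication $3 \Rightarrow 1$) this forces $L'/F$ to be elementary abelian, and I can write $L' = F(\gamma', \delta')$ with $F(\gamma') \cong_F F(\gamma)$ and $F(\delta') \cong_F F(\delta)$. Then, applying Lemma~\ref{quad} independently in each of the two quadratic sub-extensions of $L'$, I would rescale $\gamma'$ and $\delta'$ by suitable elements of $F^\times$ so as to achieve $\gamma'^2 = a$ and $\delta'^2 = b$ on the nose.

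The closing step invokes Theorem~\ref{abelian-extension-properties} once more: both $\gamma + \delta \in L$ and $\gamma' + \delta' \in L'$ are then biquadratic generators of their respective fields with the same minimal polynomial $P(X) = X^4 - 2(a+b)X^2 + (a-b)^2$, so $L$ and $L'$ are both $F$-isomorphic to $F[X]/\langle P(X)\rangle$ and hence to each other. The step I expect to be the most delicate is verifying that the distinctness of $F(\gamma)$ and $F(\delta)$ inside $L$ transfers to distinctness of the two sub-extensions of $L'$, and that the two chosen generators of $L'$ can be rescaled simultaneously to match $\gamma$ and $\delta$ exactly; both hurdles dissolve once Lemma~\ref{quad} is applied in each quadratic piece independently.
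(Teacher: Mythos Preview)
Your proof is correct, and the forward direction matches the paper's. For the converse, however, you take a genuinely different route. The paper picks $F$-isomorphisms $\sigma_i : L_i \to L_i'$ onto the two quadratic sub-extensions of $L'$, observes that the minimal polynomial of $\delta$ over $F$ stays irreducible over $F(\gamma)$ because $F(\gamma)\cap F(\delta)=F$, and then invokes the isomorphism extension lemma (cited from Rotman) to extend $\sigma_1$ to an $F$-isomorphism $L = F(\gamma)(\delta) \to L_1'(\sigma_2(\delta)) = L'$. Your argument instead stays entirely inside the paper: you use Lemma~\ref{quad} to show that distinct quadratic sub-extensions of $L$ are non-isomorphic (hence their images in $L'$ are distinct), apply Theorem~\ref{abelian-extension-properties} to conclude $L'/F$ is elementary abelian, rescale the generators so that $(\gamma')^2=a$ and $(\delta')^2=b$, and then match the minimal polynomials $X^4-2(a+b)X^2+(a-b)^2$ of $\gamma+\delta$ and $\gamma'+\delta'$. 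Your approach has the advantage of being self-contained and of making explicit the distinctness of the two sub-extensions in $L'$, which the paper simply asserts; the paper's approach is shorter and does not require first establishing that $L'$ is elementary abelian.
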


    \begin{proof}
        $\Longrightarrow$ Let $\psi:L \rightarrow L'$ be a $F$-isomorphism, it is not hard to show that $F(\psi(\gamma))$ and $F(\psi(\delta))$
        are quadratic sub-extensions of $L'/F$ and that $F(\gamma) \cong F(\psi(\gamma))$ and $F(\delta) \cong F(\psi(\delta))$.

        $\Longleftarrow$ Let $L_1, L_2 \subset L$ be distinct quadratic extensions and $L_1', L_2' \subset L'$
        be distinct quadratic extensions such that $L_1:=F(\gamma) \cong L_1'$ and $L_2:=F(\delta) \cong L_2'$.        
        Let $\sigma_i : L_i \rightarrow L_i'$ be $F$-isomorphisms. 
       Since $\delta$ is a generator of $L_2$ over $F$, then $\sigma_2 (\delta)$ is a generator of $L_2'$ over $F$ both with the same minimal polynomial $P(X)$. 
        We have that $P(X)$ is irreducible over $F(\gamma)$. Indeed, $P(X)$ does not have a root in $F(\gamma)$ since $F(\gamma) \cap F(\delta )=F$.
        Hence, by \cite[Lemma 50]{rotman1}, there exists a $F$-isomorphism $\overline{\sigma_1}: L=L_1(\delta) \rightarrow L'=L_1'(\sigma_2(\delta))$ extending $\sigma_1$.
        This concludes the proof.
    \end{proof}

The following theorem, permits to classify elementary abelian extensions comparing their minimal biquadratic polynomials.

\begin{theorem}\label{elem_class}
    Let $L/F$ be an elementary abelian field extension, $y \in L$ be a biquadratic generator with minimal polynomial
    \[P(X)=X^4+uX^2+w^2 \in F[X].\]
    Let $L'/F$ be a biquadratic field extension, $y' \in L'$ biquadratic generator then with minimal polynomial
    \[Q(X)=X^4+vX^2+z^2 \in F[X]\]following
    then $L/F$ and $L'/F$ are $F$-isomorphic if and only if at least one the following statements is true.
  \begin{multicols}{3}
      \begin{enumerate}
          \item $\frac{-v-2z}{-u+2w}, \frac{-v+2z}{-u-2w} \in F^2$
          \item $\frac{-v-2z}{-u+2w}, \frac{-v+2z}{u^2-4w^2} \in F^2$

          \item $\frac{-v-2z}{-u-2w}, \frac{-v+2z}{-u+2w} \in F^2$
       	\item $\frac{-v-2z}{-u-2w}, \frac{-v+2z}{u^2-4w^2} \in F^2$
        \item $\frac{-v-2z}{u^2-4w^2}, \frac{-v+2z}{-u-2w} \in F^2$
        \item $\frac{-v-2z}{u^2-4w^2}, \frac{-v+2z}{-u+2w} \in F^2$
    \end{enumerate}\end{multicols}

\end{theorem}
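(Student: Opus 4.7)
The strategy is to reduce the isomorphism question to a matching between quadratic subextensions, and then enumerate the $3!=6$ possible bijections.

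First I would identify the three quadratic subextensions of $L/F$ explicitly in terms of $u$ and $w$. Since $L/F$ is elementary abelian, by Lemma \ref{galois_w} $L$ is the splitting field of $P(X)$, so write the four roots as $\pm \alpha, \pm \beta \in L$ with the normalization $\alpha\beta=w$. Then $\alpha^2+\beta^2=-u$, and direct computation gives $(\alpha+\beta)^2=-u+2w$, $(\alpha-\beta)^2=-u-2w$, and $(\alpha^2-\beta^2)^2=u^2-4w^2$. By Lemma \ref{irred}, none of $-u+2w$, $-u-2w$, $u^2-4w^2$ are squares in $F$, so $F(\alpha+\beta)=F(\sqrt{-u+2w})$, $F(\alpha-\beta)=F(\sqrt{-u-2w})$, $F(\alpha^2-\beta^2)=F(\sqrt{u^2-4w^2})$ are three distinct quadratic subextensions of $L/F$; by Theorem \ref{abelian-extension-properties} these are the only three. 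The same reasoning applied to $Q(X)$ gives $F(\sqrt{-v+2z})$, $F(\sqrt{-v-2z})$, $F(\sqrt{v^2-4z^2})$ as the three quadratic subextensions of $L'/F$.

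Next I invoke Lemma \ref{class_by_sub}: $L/F$ and $L'/F$ are $F$-isomorphic if and only if two distinct quadratic subextensions of $L/F$ are $F$-isomorphic to two distinct quadratic subextensions of $L'/F$. Combined with Lemma \ref{quad} (which says $F(\sqrt{a})\cong_F F(\sqrt{b})$ iff $a/b \in F^2$), this translates into the existence of a bijection between the parameter multisets $\{-u+2w,-u-2w,u^2-4w^2\}$ and $\{-v-2z,-v+2z,v^2-4z^2\}$ such that each matched pair has ratio in $F^2$. There are $3!=6$ such bijections. A priori each bijection produces three ratio-in-$F^2$ conditions, but I would verify that any two of the three imply the third: both products $(-u+2w)(-u-2w)(u^2-4w^2)=(u^2-4w^2)^2$ and $(-v-2z)(-v+2z)(v^2-4z^2)=(v^2-4z^2)^2$ are squares in $F$, so the product of the three matching ratios is a square, forcing the third ratio to be a square whenever the first two are.

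Finally, I would enumerate the six bijections by first choosing which of $\{-u+2w,-u-2w,u^2-4w^2\}$ matches $-v-2z$ (three choices), and then which of the remaining two matches $-v+2z$ (two choices), the third matching for $v^2-4z^2$ being automatic. Writing each choice as the pair of ratio conditions gives exactly the six statements (1)--(6) in the theorem. The main obstacle, and really the only one, is the bookkeeping to show that the three ratio conditions per bijection collapse to the two listed; the rest is a direct application of Lemmas \ref{class_by_sub} and \ref{quad} to the explicit description of the quadratic subextensions.
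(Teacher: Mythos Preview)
Your proposal is correct and follows essentially the same approach as the paper: identify the three quadratic subextensions of each side in terms of the polynomial coefficients, apply Lemma~\ref{class_by_sub} to reduce the isomorphism question to matching two subextensions, and use Lemma~\ref{quad} to translate each of the six matchings into a pair of ratio-in-$F^2$ conditions. The only cosmetic difference is that the paper applies Lemma~\ref{class_by_sub} directly to two of the three subextensions of $L'$ (so each case is naturally a pair of conditions), whereas you phrase it as a full bijection of three against three and then observe that the third condition is redundant because the product of the three parameters on each side is a square; both routes yield the same six cases.
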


\begin{proof}
    The quadratic sub-extensions of $L/F$ are precisely $F(\alpha)$, $F(\beta)$ and $F(\alpha\beta)$ with $\alpha^2=-u+2w$,
    and $\beta^2=-u-2w$ so $(\alpha\beta)^2=u^2-4w^2$, also the quadratic sub-extensions of $L'/F$ are $F(\delta)$, $F(\gamma)$
    and $F(\delta\gamma)$ where $\delta^2=-v+2z$, $\gamma^2=-v-2z$ and $(\delta\gamma)^2=v^2-4z^2$.
    By Lemma \ref{class_by_sub}, $L/F$ and $L'/F$ are isomorphic if and only if $F(\delta)$ and $F(\gamma)$
    are isomorphic to quadratic sub-extensions of $L/F$ since $L'=F(\delta).F(\gamma)$.
    There are exactly $6$ ways in which this can happen, these are the possibilities.
    \begin{enumerate}
          \item $F(\gamma) \cong F(\alpha)$ and $F(\delta) \cong F(\beta) \iff  \frac{\gamma^2}{\alpha^2}=\frac{-v-2z}{-u+2w},\frac{\delta^2}{\beta^2}= \frac{-v+2z}{-u-2w}, \in F^2$. 
               \item $F(\gamma) \cong F(\alpha)$ and $F(\delta) \cong F(\alpha\beta) \iff \frac{\gamma^2}{\alpha^2}=\frac{-v-2z}{-u+2w}, \frac{\delta^2}{(\alpha\beta)^2}=\frac{-v+2z}{u^2-4w^2} \in F^2$. 

        \item $F(\gamma) \cong F(\beta)$ and $F(\delta) \cong F(\alpha) \iff \frac{\gamma^2}{\beta^2}=\frac{-v-2z}{-u-2w},\frac{\delta^2}{\alpha^2}= \frac{-v+2z}{-u+2w} \in F^2$. 

        \item $F(\gamma) \cong F(\beta)$ and  $F(\delta) \cong F(\alpha\beta) \iff \frac{\gamma^2}{\beta^2}= \frac{-v-2z}{-u-2w}, \frac{\delta^2}{(\alpha\beta)^2}= \frac{-v+2z}{u^2-4w^2} \in F^2$. 

        \item $F(\gamma) \cong F(\alpha\beta)$ and $F(\delta) \cong F(\alpha) \iff \frac{\gamma^2}{(\alpha\beta)^2}=\frac{-v-2z}{u^2-4w^2}, \frac{\delta^2}{\alpha^2}= \frac{-v+2z}{-u+2w} \in F^2$. 
                \item $F(\gamma) \cong F(\alpha\beta)$ and $F(\delta) \cong F(\beta) \iff \frac{\gamma^2}{(\alpha\beta)^2}=\frac{-v-2z}{u^2-4w^2}, \frac{\delta^2}{\beta^2}=\frac{-v+2z}{-u-2w} \in F^2$. 
    \end{enumerate}
    The six previous equivalences are obtained using Lemma \ref{quad}
\end{proof}

The next remark explicit the relationship between generator of isomorphism elementary abelian extensions. 

\begin{remark} \label{elem_generators}
    \begin{enumerate}
        \item Note that for $P(X)=X^4+uX^2+w^2$ (resp. $Q(X)=X^4+vX^2+z^2$), we have  $P(X)=X^4-2(a+b)X^2+(a-b)^2$ where $a=-u-2w, b=-u+2w$
        (resp.  $Q(X)=X^4-2(a'+b')X^2+(a'-b')^2$ where $a'=-v-2z$ and $b'=-v+2z$).
        We also note that such $a, b$ (resp. $a', b'$) are unique (see also Theorem \ref{abelian-extension-properties}).
        \item  Let $L/F$ be an elementary abelian extension, $y$ a generator and $P(X)=X^4-2(a+b)X^2+(a-b)^2$ be its minimal polynomial (see Theorem \ref{abelian-extension-properties}). We also know from Theorem \ref{abelian-extension-properties}, 
        that $y :=\alpha +\beta$ whith $\alpha$ (resp. $\beta$) such that $\alpha^2=a$ (resp. $\beta^2 = b$).
        From the proof of Lemma \ref{abelian-extension-properties} and Theorem \ref{elem_class}, we have that the set of biquadratic generators of
        $L/F$ is $X:=X_{a} \cup X_{b} \cup X_{ab}$ where $X_{ab}=\{s \alpha +r \beta |s,r \in F^\times\},
        X_{a}=\{r \beta +s \alpha \beta |r,s \in F^\times\}, X_{b}=\{r\alpha +s \alpha \beta |r, s \in F^\times\}$.
        From $y^2=a+b+2\alpha\beta$, we get $\alpha\beta=\frac{1}{2}(y^2-a-b)$. So
        $\frac{1}{2}y(y^2-a-b)-ay= \alpha \beta y - a y =(a-b)\alpha$. Hence $\alpha=\frac{3a+b}{2(a-b)}y-\frac{1}{2(a-b)}y^3$. Similarly,
        $\beta=\frac{1}{2(a-b)}y^3-\frac{(a+3b)}{2(a-b)}y$.
        It follows that $s\alpha+r\beta=\frac{r-s}{2(a-b)}y^3+\frac{(3a+b)s-(a+3b)r}{2(a-b)}y$,
        $s\alpha+r\alpha\beta=s(\frac{3a+b}{2(a-b)}y-\frac{1}{2(a-b)}y^3)+r(\frac{1}{2}(y^2-a-b))=
        -\frac{(a+b)r}{2}+\frac{(3a+b)s}{2(a-b)}y+\frac{r}{2}y^2-\frac{s}{2(a-b)}y^3$, and
        $s\beta+r\alpha\beta=s(\frac{1}{2(a-b)}y^3-\frac{(a+3b)}{2(a-b)}y)+r(\frac{1}{2}(y^2-a-b))=
        -\frac{(a+b)r}{2}-\frac{(a+3b)s}{2(a-b)}y+\frac{r}{2}y^2+\frac{s}{2(a-b)}y^3$.
        Therefore, $X=\{ -\frac{(a+b)r}{2}-\frac{(a+3b)s}{2(a-b)}y+\frac{r}{2}y^2+\frac{s}{2(a-b)}y^3|r,s \in F^\times\} \cup
        \{-\frac{(a+b)r}{2}+\frac{(3a+b)s}{2(a-b)}y+\frac{r}{2}y^2-\frac{s}{2(a-b)}y^3|r,s \in F^\times\} \cup
        \{\frac{r-s}{2(a-b)}y^3+\frac{(3a+b)s-(a+3b)r}{2(a-b)}y| r,s \in F^\times\}$.
            Note that $X_{ab}=\{uy+vy^3| u,v \in F \text{ such that } u \neq -(a+3b)v  \text{ or } u  \neq -(3b +a) v \}$,
            this follows from the fact that $uy+v y^3=\frac{r-s}{2(a-b)}y^3+\frac{(3a+b)s-(a+3b)r}{2(a-b)}y$
            where $s=u+(a+3b)v$ and $r=u+(3a+b)v$.

    \end{enumerate}

\end{remark} 

The goal of the remaining part of this section is to reformulate the previous theorem into a group theoretic language. 

\begin{definition} \label{simrel}
    We define a relation $\sim$ on $F^\times \times F^\times$ as follows,
    if $(a,a'), (b,b') \in F^\times \times F^\times$ then $(a,a') \sim_1 (b,b')$ if only if at least one of the following is true
    \begin{multicols}{3}
        \begin{enumerate}
            \item $ab, a'b' \in F^2$
            \item $ab', a'b \in F^2$
            \item $ab, a'bb' \in F^2$
            \item $abb', a'b \in F^2$
            \item $ab', a'bb' \in F^2$
            \item $abb', a'b' \in F^2$
        \end{enumerate}
    \end{multicols}
\end{definition}

\begin{definition}
	Let $\sim_1$ be a relation on $F^\times \times F^\times$ defined by 
    $(a,a') \sim_1 (b, b')$ if $ab,a'b' \in F^2$.
	This relation is clearly an equivalence relation.
    Note that $(F^\times \times F^\times)/ \sim_1 \simeq F^\times / (F^\times)^2 \times F^\times / (F^\times)^2$
\end{definition}

\begin{remark} \label{eqre}
    \begin{enumerate}
        \item Note that $\sim$ is also the equivalence relation defined by $(a,a') \sim (b,b')$ if and only if at
            least one of the following is satisfied
    \begin{enumerate}
        \item[1.] $ (a,a') \sim_1 (b, b')$,
        \item[2.] $ (a,a') \sim_1 (b', b)\Leftrightarrow (a',a) \sim_1 (b, b')$
        \item[3.] $ (a,a') \sim_1 (b, bb')\Leftrightarrow (a,aa') \sim_1 (b, b')$
        \item[4.] $ (a,a') \sim_1 (bb', b)\Leftrightarrow (a',aa') \sim_1 (b, b')$
        \item[5.] $ (a,a') \sim_1 (b', b b') \Leftrightarrow (aa',a) \sim_1 (b, b')$
        \item[6.] $ (a,a') \sim_1 (bb', b')\Leftrightarrow (aa',a') \sim_1 (b, b')$ \end{enumerate}
    where each of those point are respectively equivalent to the point in the Definition \ref{simrel}.
    \item $(a, a') \sim (b, b') \Leftrightarrow (a,a' ) \sim (b' , b) \Leftrightarrow (a, a') \sim (b, bb') \Leftrightarrow (a, a') \sim (b', bb') \Leftrightarrow (a, a') \sim (bb', b) \Leftrightarrow (a, a') \sim (bb', b')$.
    \end{enumerate}
\end{remark}

\begin{lemma}
    The relation $\sim$ is an equivalence relation.
\end{lemma}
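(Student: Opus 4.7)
The plan is to pass to the quotient group $G := F^\times/(F^\times)^2$, which is an elementary abelian $2$-group. Writing $\bar x$ for the class of $x \in F^\times$ in $G$, the condition $xy \in F^2$ becomes the equality $\bar x = \bar y$. In this language, I would first rewrite each of the six conditions of Definition \ref{simrel} as the assertion that $(\bar b, \bar b')$ belongs to the set
\[
\bigl\{(\bar a, \bar a'),\ (\bar a', \bar a),\ (\bar a, \overline{aa'}),\ (\overline{aa'}, \bar a),\ (\bar a', \overline{aa'}),\ (\overline{aa'}, \bar a')\bigr\};
\]
for instance, condition (3) reads $\bar b = \bar a$ together with $\bar{a'}\, \bar b\, \bar{b'} = \bar 1$, i.e.\ $(\bar b, \bar b') = (\bar a, \overline{aa'})$, and the other five cases are analogous.

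Next, I would verify that in each of these six cases, $\overline{bb'}$ equals the remaining element of $\{\bar a, \bar a', \overline{aa'}\}$, using that $G$ has exponent $2$ (for example, $(\bar b, \bar b') = (\bar a, \overline{aa'})$ yields $\overline{bb'} = \bar a \cdot \overline{aa'} = \bar{a'}$). This shows $\langle \bar b, \bar b' \rangle = \langle \bar a, \bar a' \rangle$ as subgroups of $G$. Conversely, if these two subgroups coincide, a short case analysis on their common order (which is $1$, $2$ or $4$) will force $(\bar b, \bar b')$ to be one of the six listed pairs, so $(a, a') \sim (b, b')$.

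The upshot is that $\sim$ coincides with the relation ``$\langle \bar a, \bar a' \rangle = \langle \bar b, \bar b' \rangle$'' on $F^\times \times F^\times$, and equality of subgroups is of course an equivalence relation. The main obstacle I expect is the degenerate case when some of $\bar a$, $\bar a'$, or $\overline{aa'}$ coincide: here the common subgroup has order $1$ or $2$ and several of the six pairs collapse together, but direct inspection of these low-order possibilities will confirm that the correspondence still holds.
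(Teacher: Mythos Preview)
Your approach is correct and is genuinely different from the paper's. The paper argues each axiom separately using Remark \ref{eqre}: it rewrites $(a,a')\sim(b,b')$ as $(a,a')\sim_1 (b_1,b_1')$ for one of the six ordered pairs $(b_1,b_1')$ drawn from $\{b,b',bb'\}$, then checks reflexivity, symmetry, and transitivity by manipulating these $\sim_1$-relations case by case (the transitivity step is only sketched: ``it is not hard to prove that $(b_1,b_1')\sim_1(c_1,c_1')$'').

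Your route is more conceptual. By passing to $G=F^\times/(F^\times)^2$ and observing that the six conditions say exactly that $(\bar b,\bar b')$ is an ordered pair of (not necessarily distinct) generators of the subgroup $\langle \bar a,\bar a'\rangle$, you reduce $\sim$ to ``$\langle \bar a,\bar a'\rangle=\langle \bar b,\bar b'\rangle$'', which is manifestly an equivalence relation. This bypasses all casework for the axioms themselves; the only case analysis left is the converse implication in the degenerate situations where $|\langle \bar a,\bar a'\rangle|\le 2$, and that is a routine check (for order $2$ the six pairs collapse to $\{(1,g),(g,1),(g,g)\}$, which is exactly the set of generating ordered pairs; for order $1$ everything is $(1,1)$). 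Your reformulation also dovetails nicely with the $S_3$-action introduced in Lemma \ref{s_3_action} and the bijection of Lemma \ref{corr}, since your six pairs are visibly the $S_3$-orbit of $(\bar a,\bar a')$.
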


\begin{proof}
    Let $(a,a'), (b,b'), (c,c') \in  F^\times \times F^\times$
    \begin{itemize}

        \item Reflexitivity is clear
        \item Symmetry is not hard to prove from Remark \ref{eqre} since $\sim_1$ is symmetric.


        \item Let $(a,a') \sim (b,b')$ and $(b,b') \sim (c,c')$. From Remark \ref{eqre}, $(a,a') \sim_1 (b_1, b_1')$ and $(c,c')\sim_1 (b_2, b_2')$ where $b_i \in \{ b , bb', b'\}$ and $b_i' \in \{ b , bb', b'\}\backslash \{ b_i\}$ for $i=1,2$. Then, it is not hard to prove that $(b_1, b_1') \sim_1 ( c_1, c_1')$ where $c_1 \in \{ c , cc', c'\}$ and $c_1' \in \{ c , cc', c'\}\backslash \{ c_1\}$. 
        This will conclude the proof of transitivity.
    \end{itemize}
\end{proof}

\begin{lemma}[Definition]\label{s_3_action}
One can define an action of $S_3$ on $(F^\times \times F^\times)/ \sim_1\simeq F^\times/ (F^\times)^2  \times F^\times / (F^\times)^2$. Given $\sigma \in S_3$ and  $[(b, b')]_{\sim_1}\in (F^\times \times F^\times)/ \sim_1 $, we define $\sigma \cdot [(b, b')]_{\sim_1} := [(\sigma_{(b,b')} (b) , \sigma_{(b,b')} (b'))]_{\sim_1}$ where $\sigma_{(b,b')}(b) = c $ and $\sigma_{(b,b')}(b')=d$ where $c, d \in \{ b, b' , bb'\}$ such that when $b$ is identified with $1$, $b'$ with $2$ and $bb'$ with $3$, $c$ (resp. d) is the element of $\{ b, b' , bb'\}$ identified with $\sigma(1)$ (resp. $\sigma(2)$).     We will denote the set of equivalence classes by $\frac{ F^\times/ (F^\times)^2  \times F^\times / (F^\times)^2}{S_3}$ and $O_{S_3}([(a,b)]_{\sim_1})$ the orbit of $[(a, b)]_{\sim_1}$ via this action.

\end{lemma}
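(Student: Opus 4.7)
The plan is to verify two things: first, that the assignment $(b,b') \mapsto (\sigma_{(b,b')}(b), \sigma_{(b,b')}(b'))$ descends to a well-defined self-map of $(F^\times \times F^\times)/\sim_1$; and second, that the resulting family of maps indexed by $\sigma \in S_3$ satisfies the axioms of a group action.

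For well-definedness, I would start by assuming $(b, b') \sim_1 (c, c')$, which means $bc, b'c' \in (F^\times)^2$. Multiplying these congruences gives $bb' \cdot cc' = (bc)(b'c') \in (F^\times)^2$, so the ordered triples $(b, b', bb')$ and $(c, c', cc')$ agree entry-by-entry modulo squares. Since $\sigma_{(b,b')}$ and $\sigma_{(c,c')}$ both extract the $\sigma(1)$-st and $\sigma(2)$-nd entries from their respective triples under the fixed identification with $\{1, 2, 3\}$, the resulting pairs have matching coordinates modulo $(F^\times)^2$, and are thus $\sim_1$-equivalent.

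For the group action axioms, the identity of $S_3$ acts trivially because $e(1) = 1$ and $e(2) = 2$ directly yields $(b, b')$. For composition, the crucial observation is that $\{b, b', bb'\}$ behaves like the non-identity elements of a Klein four-group modulo squares: the product of any two of them is congruent to the third modulo $(F^\times)^2$. Hence, after applying $\tau$ to obtain $(c, c') := (\tau_{(b,b')}(b), \tau_{(b,b')}(b'))$, the product $cc'$ is congruent modulo squares to the remaining element of $\{b, b', bb'\}$, namely the entry at position $\tau(3)$. Thus the new labeling $c \leftrightarrow 1, c' \leftrightarrow 2, cc' \leftrightarrow 3$ is precisely the $\tau$-permuted version of the original labeling, and applying $\sigma$ afterward selects the correctly composed positions in the original triple.

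The main obstacle lies in bookkeeping these successive relabelings cleanly. A convenient way to bypass a full case analysis on all six elements of $S_3$ is to verify the action on a generating set such as the transpositions $(1\ 2)$ and $(2\ 3)$, and then check the defining relations $(1\ 2)^2 = e$, $(2\ 3)^2 = e$, and $((1\ 2)(2\ 3))^3 = e$. Each relation reduces to short elementary computations in $F^\times/(F^\times)^2$ using congruences like $b \cdot bb' \equiv b' \pmod{(F^\times)^2}$, which suffices to extend the action property to all of $S_3$. Alternatively, one can identify $(b, b') \in F^\times/(F^\times)^2 \times F^\times/(F^\times)^2$ with an ordered choice of two elements from the three-element set $\{[b], [b'], [bb']\}$ and transfer the natural $S_3$-action on such ordered selections, bypassing the relabeling issue entirely.
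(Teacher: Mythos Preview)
Your proposal is correct and follows essentially the same approach as the paper. For well-definedness, the paper does exactly what you do: assume $[(a,b)]_{\sim_1}=[(c,d)]_{\sim_1}$, deduce that $ac,bd,(ab)(cd)\in (F^\times)^2$, and then observe (by a short case split on $\sigma(1)$ and $\sigma(2)$) that the images agree modulo squares. Your phrasing in terms of the ordered triples $(b,b',bb')$ and $(c,c',cc')$ agreeing entrywise is simply a cleaner way to package that same case analysis.

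Where you actually go beyond the paper is in the verification of the action axioms. The paper disposes of this in a single sentence, asserting that it ``follows from the fact that $S_3$ acts on $F^\times\times F^\times$.'' As your own computation implicitly reveals, that sentence is not literally correct: applying $(2\ 3)$ twice sends $(b,b')$ to $(b,b^2b')$, which equals $(b,b')$ only in the quotient by $\sim_1$, not in $F^\times\times F^\times$ itself. Your Klein-four observation (that the product of any two of $b,b',bb'$ is the third modulo squares) is precisely what is needed to make the composition axiom go through on the quotient, and your suggestion to check it on generators or via the ordered-pair-from-a-three-element-set model is a sound way to avoid case-by-case verification. So your treatment of this point is more careful than the paper's.
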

\begin{proof}
    We start by showing that given $\sigma \in S_3$ and $(a,b)\in F^\times \times F^\times$, we to show that $\sigma([(a,b)]_{\sim_1})$ is well defined i.e
    $\sigma([(a,b)]_{\sim_1})$ does not depend on the choice of $(a,b)$. Let $[(a,b)]_{\sim_1}=[(c,d)]_{\sim_1}$ then $ac,bd \in F^2$ hence
    $(ab)(cd) \in F^2$, now we want to prove that $[(a',b')]_{\sim_1} = [(c',d')]_{\sim_1}$
    where $a'= \sigma_{[(a,b)]} ( a)$, $b' =  \sigma_{[(a,b)]} ( b)$, $c'= \sigma_{[(c,d)]} ( c)$ and $d'= \sigma_{[(c,d)]} ( d)$, then \begin{itemize}
             \item if $\sigma(1)=1$ then $a'=a$ and $c'=c$ hence $a'c'=ac \in F^2$
             \item if $\sigma(1)=2$ then $a'=b$ and $c'=d$ hence $a'c'=bd \in F^2$
             \item if $\sigma(1)=3$ then $a'=ab$ and $c'=cd$ hence $a'c'=(ab)(cd) \in F^2$
    \end{itemize}
    Similar arguments can be used to show $b'd' \in F^2$ by looking at possibilities of $\sigma(2)$, this proves
    $[(a',b')]_{\sim_1}=[(c',d')]_{\sim_1}$ so $\sigma([(a,b)]_{\sim_1})$ is indeed well defined.
	Let $\psi : S_3 \times (F^\times \times F^\times)/ \sim_1 \rightarrow (F^\times \times F^\times)/ \sim_1 $ be
    defined by $\psi(\sigma,[(a,b)]_{\sim_1})=\sigma([(a,b)]_{\sim_1})$. The fact that $\psi$ is a group action follows from the fact that $S_3$ acts on $F^\times \times F^\times$. 

\end{proof}

\begin{lemma}\label{corr}
 We have the following natural bijection:
 $$(F^\times \times F^\times)/ \sim \simeq \frac{ F^\times/ (F^\times)^2  \times F^\times / (F^\times)^2}{S_3} $$
\end{lemma}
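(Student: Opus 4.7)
The plan is to exhibit a natural bijection
$$\Phi : (F^\times \times F^\times)/\!\sim \; \longrightarrow \; \frac{F^\times/(F^\times)^2 \times F^\times/(F^\times)^2}{S_3}$$
defined by $\Phi([(a,a')]_\sim) := O_{S_3}([(a,a')]_{\sim_1})$. The whole content of the statement is already encoded in Remark \ref{eqre}(1), which rewrites the six clauses of Definition \ref{simrel} as the six assertions ``$(a,a') \sim_1 \sigma_{(b,b')}\cdot(b,b')$'' for $\sigma$ ranging over $S_3$; this essentially pre-packages the action of Lemma \ref{s_3_action} inside the relation $\sim$, so the bijection should fall out with little work.

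First I would check that $\Phi$ is well-defined. Assume $(a,a') \sim (b,b')$. By Remark \ref{eqre}(1), there exists $\sigma \in S_3$ with $(a,a') \sim_1 (\sigma_{(b,b')}(b),\sigma_{(b,b')}(b'))$, so $[(a,a')]_{\sim_1} = \sigma \cdot [(b,b')]_{\sim_1}$ in the $\sim_1$-quotient, which means $O_{S_3}([(a,a')]_{\sim_1}) = O_{S_3}([(b,b')]_{\sim_1})$. This uses only the definition of the action and the fact that we already showed it is well-defined on $\sim_1$-classes in Lemma \ref{s_3_action}.

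Next I would prove injectivity by running the same argument backwards. If $O_{S_3}([(a,a')]_{\sim_1}) = O_{S_3}([(b,b')]_{\sim_1})$, then by definition of an orbit there is some $\sigma \in S_3$ with $\sigma \cdot [(b,b')]_{\sim_1} = [(a,a')]_{\sim_1}$. Unfolding the action, this says precisely that $(a,a') \sim_1 (\sigma_{(b,b')}(b),\sigma_{(b,b')}(b'))$, which is one of the six clauses in Remark \ref{eqre}(1) and hence gives $(a,a') \sim (b,b')$. Surjectivity is automatic since any orbit $O_{S_3}([(a,a')]_{\sim_1})$ is the image of $[(a,a')]_\sim$, and every class of $\sim_1$ arises from some pair in $F^\times \times F^\times$.

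I do not expect any serious obstacle here: the main subtlety is the bookkeeping in matching the six clauses of Definition \ref{simrel} with the six elements of $S_3$ under the identification $b \leftrightarrow 1$, $b' \leftrightarrow 2$, $bb' \leftrightarrow 3$, but this is exactly what Remark \ref{eqre}(1) already spells out. Thus the argument reduces to citing Remark \ref{eqre} and the action of Lemma \ref{s_3_action} and observing that the equivalence classes of $\sim$ are by construction the $S_3$-orbits of $\sim_1$-classes.
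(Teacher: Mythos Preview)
Your proposal is correct and follows essentially the same approach as the paper: you define the same map $\Phi([(a,a')]_\sim)=O_{S_3}([(a,a')]_{\sim_1})$, use Remark~\ref{eqre} to translate the six clauses of $\sim$ into the $S_3$-action of Lemma~\ref{s_3_action} for well-definedness and injectivity, and note surjectivity is immediate. The paper's proof is organized identically, so there is nothing substantive to add.
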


\begin{proof}
        Let $\phi: (F^\times \times F^\times)/ \sim  \rightarrow \frac{ F^\times/ (F^\times)^2  \times F^\times / (F^\times)^2}{S_3}$
        be defined by $\phi([(a,b)]_{\sim})=O_{S_3}([(a,b)]_{\sim_1})$. We start by showing that $\phi$ is well defined.
        Let $[(a,b)]_\sim=[(c,d)]_\sim$. Then, by Remark \ref{eqre}, there exists $c',d' \in \{c,d,cd\}$ such that $[(a,b)]_{\sim_1}= [(c',d')]_{\sim_1}$ where $c'=\sigma_{(c,d)}(c)$, $d'=\sigma_{(c,d)}(d)$ for some $\sigma \in S_3$. Hence $\phi([(a,b)]_\sim)=
        O_{S_3}([(a,b)]_{\sim_1})=O_{S_3}([(c,d)]_{\sim_1})=\phi([(c,d)]_\sim)$ proving that $\phi$ is well defined.
        It's clear that $\phi$ is surjective, so it remains to show that $\phi$ is injective.
        Let $\phi([(a,b)]_\sim)=\phi([(a,b)]_\sim)$ that is $O_{S_3}([(a,b)]_{\sim_1})=O_{S_3}([(c,d)]_{\sim_1})$, this implies there exists $\sigma \in S_3$ such that
        $[(a,b)]_{\sim_1}= [(\sigma_{(c,d)}(c), \sigma_{(c,d)}(d))]_{\sim_1}=[(c',d')]_{\sim_1}$ where $c',d' \in \{c,d,cd\}$ and by Remark
        \ref{eqre} we have that $[(a,b)]_\sim=[(c,d)]_\sim$ hence $\phi$
        injective.
\end{proof}

We are aiming to use the above quotient to describe a bijective correspondance between the elementary abelian extension up isomorphism and a subset of this quotient. To do this, we can consider quartic elementary abelian extensions as compositum of two distinct quadratic extensions and identify them with the pair of corresponding parameters of those quadratics extension for some choosen radical generators. Following this identification, we need to exclude the following subset
$$S=\{ [ (a, a') ]_\sim | a \in (F^\times)^2 \ or \ a' \in (F^\times)^2\ or \ aa' \in (F^\times)^2\}$$
as it leads to a compositum of degree smaller than $4$. 
\begin{lemma}\label{Nelab} 
We have that
$$ S=  \{[(1,a)]_\sim | a \in F^\times\} \simeq \frac{ N}{ S_3}\simeq F^\times/ (F^\times)^2$$
where $N$ is a subgroup $F^\times/ (F^\times)^2  \times F^\times / (F^\times)^2$ isomorphic to $ F^\times/ (F^\times)^2$.
\end{lemma}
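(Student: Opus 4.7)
The plan is to first pin down $S$ exactly, then establish the two bijections $S \simeq N/S_3$ and $N/S_3 \simeq F^\times/(F^\times)^2$ in turn.

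For the set-theoretic description $S = \{[(1,a)]_\sim : a \in F^\times\}$, the inclusion $\supseteq$ is immediate because $1 \in (F^\times)^2$. For $\subseteq$, suppose $[(a,a')]_\sim \in S$, so that one of $a$, $a'$, $aa'$ is a square. The six translates of $(a,a')$ listed in Remark \ref{eqre}, namely $(a,a'), (a',a), (a,aa'), (aa',a), (a',aa'), (aa',a')$, all lie in the same $\sim$-class, so I can pick a representative whose first coordinate is the given square and then apply $\sim_1$ to replace that first coordinate by $1$, obtaining $[(a,a')]_\sim = [(1,c)]_\sim$ for some $c \in F^\times$.

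For the next step I introduce $N := \{[(1,a)]_{\sim_1} : a \in F^\times\}$, which under the standard identification $(F^\times \times F^\times)/{\sim_1} \simeq F^\times/(F^\times)^2 \times F^\times/(F^\times)^2$ is the subgroup $\{1\} \times F^\times/(F^\times)^2$; the map $a \mapsto [(1,a)]_{\sim_1}$ evidently induces an isomorphism $F^\times/(F^\times)^2 \simeq N$. I interpret $N/S_3$ as the image of $N$ in $\frac{F^\times/(F^\times)^2 \times F^\times/(F^\times)^2}{S_3}$, that is $\{O_{S_3}([(1,a)]_{\sim_1}) : a \in F^\times\}$. Lemma \ref{corr} then gives the bijection $S \simeq N/S_3$ directly, since the bijection $\phi$ of that lemma sends $[(1,a)]_\sim$ to $O_{S_3}([(1,a)]_{\sim_1})$, so $\phi(S) = N/S_3$.

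For the last bijection I will compute the $S_3$-orbit of $[(1,a)]_{\sim_1}$ to be $\{[(1,a)]_{\sim_1}, [(a,1)]_{\sim_1}, [(a,a)]_{\sim_1}\}$ by running through the six elements of $S_3$. Two orbits $O_{S_3}([(1,a)]_{\sim_1})$ and $O_{S_3}([(1,b)]_{\sim_1})$ coincide precisely when $[(1,a)]_{\sim_1}$ equals one of $[(1,b)]_{\sim_1}, [(b,1)]_{\sim_1}, [(b,b)]_{\sim_1}$, and a direct check using the definition of $\sim_1$ shows that each of the three alternatives reduces to the single condition $ab \in (F^\times)^2$. Hence the composite $F^\times \to N \to N/S_3$ has kernel $(F^\times)^2$ and is surjective, giving $F^\times/(F^\times)^2 \simeq N/S_3$. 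The main subtlety to keep in mind is that the $S_3$-action does not preserve $N$ as a subset of the ambient group, so $N/S_3$ must be read as the set of $S_3$-orbits meeting $N$; fortunately each such orbit meets $N$ in a single point when $a \notin (F^\times)^2$ and collapses to the trivial orbit when $a \in (F^\times)^2$, which is exactly what makes the bijection $N \simeq N/S_3$ work.
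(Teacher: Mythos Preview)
Your proof is correct and follows essentially the same route as the paper: you take the same $N=\{[(1,a)]_{\sim_1}:a\in F^\times\}$, invoke Lemma~\ref{corr} to identify $S$ with $N/S_3$, and then analyse when two $S_3$-orbits through $N$ coincide to obtain the bijection with $F^\times/(F^\times)^2$. Your explicit computation of the orbit $\{[(1,a)]_{\sim_1},[(a,1)]_{\sim_1},[(a,a)]_{\sim_1}\}$ and the observation that each such orbit meets $N$ in a single point make the final bijection slightly more transparent than the paper's argument; one small wording issue is that the phrase ``each of the three alternatives reduces to $ab\in(F^\times)^2$'' is literally true only for the first alternative (the other two are strictly stronger), but since they both imply the first, the disjunction is indeed equivalent to $ab\in(F^\times)^2$ and your conclusion stands.
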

\begin{proof}
Note that $1$ is a representative for a square in $F/(F^\times)^2$ and therefore 
$$\begin{array}{cll} S &=& \{[1,a]_{\sim}| a \in F^\times \}\cup \{[a,1]_{\sim}| a \in F^\times \}\ \cup \{[a,1/a]_{\sim}| a \in F^\times \}\\ &=&  \{[1,a]_{\sim}| a \in F^\times \}
= \{[a,1]_{\sim}| a \in F^\times \} = \{[a,1/a]_{\sim}| a \in F^\times  \} \end{array}$$
We can take $N=\{[1,a]_{\sim_1}| a \in F^\times \}$ (Note that we could have taken also to be either $\{[a,1]_{\sim_1}| a \in F^\times \}$ or$\{[a,1/a]_{\sim_1}| a \in F^\times \}$) and $\psi: \frac{N}{S_3} \rightarrow F/(F^\times)^2$ be defined as 
$\psi(O_{S_3}([(1,a)]))=a(F^\times)^2$. We will show that $\psi$ is a group isomomorphsim. It is clear that 
 $\psi$ is well defined. Indeed, by definition, $O_{S_3}([(1,a)]_{\sim_1})=O_{S_3}([(1,b)]_{\sim_1})$ implies 
 $a(F^\times)^2=b(F^\times)^2$ so $\psi$ is indeed well defined. The fact that $\psi$ is surjectivite and a group homormorphism is clear. The injectivity is the result of the definition of $\sim$ and Lemma \ref{corr}.

\end{proof}
\begin{theorem}\label{final_elem_class}
$F$ is fixed and using the notation above. We have the following bijective correspondance:
$$\{ L/F \text{ elementary abelian extension}\} / \sim_{iso} \simeq  \frac{ [(F^\times/ (F^\times)^2  \times F^\times / (F^\times)^2]- N }{S_3}  $$
where $\sim_{iso}$ is the equivalence relation on the quartic extensions $L/F\sim_{iso} L'/F$ if $L/F$ is $F$-isomorphic to $L'/F$.
\end{theorem}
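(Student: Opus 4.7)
The plan is to construct an explicit bijection $\Phi$ between isomorphism classes of elementary abelian extensions of $F$ and the quotient $\frac{[(F^\times/(F^\times)^2 \times F^\times/(F^\times)^2)]-N}{S_3}$, leveraging the uniqueness statement in Theorem \ref{abelian-extension-properties}, the isomorphism criterion of Theorem \ref{elem_class}, and the translation between $\sim$-classes and $S_3$-orbits provided by Lemma \ref{corr}.

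To define $\Phi$, given an elementary abelian extension $L/F$, Theorem \ref{abelian-extension-properties} supplies a biquadratic generator with minimal polynomial of the form $X^4-2(a+b)X^2+(a-b)^2$, with $(a,b) \in F^\times \times F^\times$ uniquely determined by that generator. I would set $\Phi([L/F]_{\sim_{iso}})$ to be the $S_3$-orbit of $[(a,b)]_{\sim_1}$. The image lands outside $N$ because, by Theorem \ref{abelian-extension-properties} and Lemma \ref{irred}, the three elements $a$, $b$, and $ab$ are all non-squares, so by Lemma \ref{Nelab} the class is not in $N$.

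The delicate point is well-definedness, split into independence of the biquadratic generator and independence of the isomorphism representative. For the first, Remark \ref{elem_generators} describes the three disjoint families $X_a$, $X_b$, $X_{ab}$ of biquadratic generators, each associated to one of the three quadratic subextensions $F(\gamma)$, $F(\delta)$, $F(\gamma\delta)$; picking a generator from a different family permutes the roles of $a$, $b$, and $ab$ in the resulting minimal polynomial (up to squares coming from the parameters $r,s \in F^\times$), and this is precisely the $S_3$ action defined in Lemma \ref{s_3_action}. For the second, Theorem \ref{elem_class} lists six explicit square-class conditions characterising $L/F \cong L'/F$; by Remark \ref{eqre} these six conditions assemble exactly into the relation $\sim$ on $F^\times \times F^\times$, and Lemma \ref{corr} identifies $\sim$-classes with $S_3$-orbits of $\sim_1$-classes, so isomorphic extensions yield the same orbit.

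Injectivity of $\Phi$ is then read off the same chain: if two extensions produce $\sim_1$-representatives lying in the same $S_3$-orbit, Lemma \ref{corr} gives a $\sim$-equivalence, which via Remark \ref{eqre} matches one of the six cases in Theorem \ref{elem_class}, yielding an $F$-isomorphism. For surjectivity, given $[(a,b)]_{\sim_1}$ outside $N$, Lemma \ref{Nelab} guarantees that $a$, $b$, and $ab$ are all non-squares; the field $L := F(\sqrt{a},\sqrt{b})$ is then a quartic extension with two distinct quadratic subextensions, hence elementary abelian by Theorem \ref{abelian-extension-properties}, and by construction $\Phi([L/F])$ is the prescribed orbit. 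The main obstacle is the meticulous verification in the well-definedness step that cycling through the families $X_a$, $X_b$, $X_{ab}$ of Remark \ref{elem_generators} produces exactly the six representatives of a single $S_3$-orbit rather than something larger or smaller; once this bookkeeping is done, all remaining steps reduce to invoking the already-established lemmas.
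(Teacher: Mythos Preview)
Your proposal is correct and follows essentially the same route as the paper: define the map via the $(a,b)$-parameters of Theorem \ref{abelian-extension-properties}, establish well-definedness and injectivity through Theorem \ref{elem_class} together with Lemma \ref{corr}, and handle surjectivity by producing an explicit extension from a given orbit. Your treatment is in fact more detailed than the paper's, which compresses well-definedness and injectivity into a single sentence; your separate handling of generator-independence via Remark \ref{elem_generators} is not strictly needed (Theorem \ref{elem_class} applied with $L=L'$ already covers it), but it does no harm.
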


\begin{proof}
	Let us define the map $\psi: \{ L/F \text{ elementary abelian extension}\} / \sim_{iso} \rightarrow \frac{ (F^\times/ (F^\times)^2  \times F^			\times / (F^\times)^2)\backslash N }{S_3}$.
For $L/F$ be an elementary abelian extensions and $\alpha \in L$ be a biquadratic generator, then by Lemma 
	\ref{abelian-extension-properties} the minimal polynomial of $\alpha$ over $F$ is $P(X)=X^4-2(a+b)X^2+(a-b)^2$ for some 
	$a,b \in F$ moreover we have $\gamma,\delta \in L$ such that $a=\gamma^2$, $b=\delta^2$ and $F(\gamma)$, $F(\delta)$,
	$F(\gamma\delta)$ are the quadratic sub-extensions of $L/F$.
	We define $\psi$ as  $\psi([L/F]_{iso})=O_{S_3}([(a,b)])$. 
	One can prove without much difficulty that $\psi$ is well defined injection using Theorem \ref{elem_class} and Lemma \ref{corr}. 
	 It remains to show that $\psi$ is surjective. For any $O_{S_3}([a,b]) \in \frac{ (F^\times/ (F^\times)^2  \times F^\times / (F^\times)^2)\backslash N }{S_3}$, we have $\phi(F(\alpha)) = O_{S_3}([a,b])$ where $\alpha$ is a root of $P(X)=X^4-2(a+b)X^2+(a-b)^2$ irreducible over $F$ by Lemma \ref{irred} since $a,b, \text{ and }ab$ are not squares in $F$, by Lemma \ref{Nelab}. 
	
\end{proof}

\begin{remark}
\begin{enumerate}
\item Let $F$ be a field such that $-1 \not\in F^2$, then there exist a one to one correspondence between the isomorphism class quartic radical elementary abelian extensions
	and the set 
	$$\begin{array}{ccl}&& \{[(-1,a)]_{\sim}| a \in F^\times\} -(\{-1\} \times -( F^\times)^2 \cup \{-1\} \times ( F^\times)^2 ) \\&=&\{[(-1,a)]_{\sim}| a \in F^\times \} -\{[(-1,1)]_{\sim}, (-1,-1)]_{\sim} \}\\
	& \simeq & F^\times/ {F^\times}^2 - \{ -( F^\times)^2 , ( F^\times)^2 \}.\end{array} $$
	We note that the pair $(-1, 1)$  correspond to an extension defined by the minimal polynomial $X^4 +4$. 
\item When $F$ is a field and $-1 \in F^2$, we observe again that in this situation, there is no radical elementary abelian extension (see also Theorem \ref{elem_radical}).
\end{enumerate} 
\end{remark}

\subsection{Elementary abelian closure}\label{subsec:elementary-abelain-closure}

In this section, we study the existence and the uniqueness of the elementary abelian closure for biquadratic extension. We apply this to the classification of non-cyclic extensions. 

\begin{remark}\label{elem_min_poly_remains_rmk}
Let $L/F$ be a biquadratic extension admitting an elementary abelian closure $E$,
Then any biquadratic generator $\alpha$ of $L/F$ is a biquadratic generator of $LE/E$ i.e $LE=E(\alpha)$ and
$min(\alpha,F)=min(\alpha,E)$ where the second equality follows because $min(\alpha,F)$ and $min(\alpha,E)$ are both
degree $4$ monic elements of $E[X]$ having $\alpha$ as a root.
\end{remark}

\begin{lemma}\label{unique_elem_closure}
Let $L/F$ be a biquadratic field extension and $\alpha \in L$ be a biquadratic generator
with minimal polynomial $P(X)=X^4+uX^2+w$.
\begin{enumerate}
    \item $L/F$ admits a trivial elementary abelian closure if and only if $w \in F^2$.
    \item If $L/F$ admits an elementary abelian closure $E$, then $E=F(\eta)$ for some $\eta \in \overline{F}$ such
    that $\eta^2=w$.
    \item $L/F$ admits an elementary abelian closure if and only if $L/F$ is not cyclic.
\end{enumerate}
\end{lemma}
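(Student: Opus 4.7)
The overall strategy is to use Remark \ref{elem_min_poly_remains_rmk} together with Lemma \ref{galois_closure} to reduce every statement to tracking when $\sqrt{w}$ can be adjoined without collapsing the degree $[LE:E]=4$. The unifying observation is that if $E$ is any elementary abelian closure, then the minimal polynomial of $\alpha$ over $E$ is still $P(X)=X^4+uX^2+w$, so by Lemma \ref{galois_closure}(1) applied to $LE/E$ we must have $w\in E^2$; writing $w=\eta^2$ with $\eta\in E$ gives $F(\eta)\subseteq E$ in every case.

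For (1), a trivial elementary abelian closure means $E=F$, equivalently $L/F$ is itself elementary abelian, which by Lemma \ref{galois_closure}(1) is equivalent to $w\in F^2$. For (2), when $w\in F^2$ we have $F(\eta)=F$ and (1) gives $E=F=F(\eta)$. When $w\notin F^2$, Lemma \ref{galois_w} tells us that the condition $\eta\in L$ is equivalent to $L/F$ being Galois; in the non-Galois case we obtain $L\cap F(\eta)=F$, so $[LF(\eta):F(\eta)]=4$, and Lemma \ref{galois_closure}(1) shows $LF(\eta)/F(\eta)$ is elementary abelian. Hence $F(\eta)$ is itself an elementary abelian closure, and the minimality of $E$ forces $E=F(\eta)$.

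For (3), the backward direction reuses the two constructions appearing in (2): if $L/F$ is elementary abelian then $E=F$ works, and if $L/F$ is biquadratic but non-Galois then $E=F(\eta)$ works. For the forward direction, I would assume $L/F$ is cyclic and derive a contradiction from the existence of a closure $E$. As above $\eta\in E$ with $\eta^2=w$; since $L/F$ is cyclic, hence Galois, Lemma \ref{galois_w} gives $\eta\in L$. Because $L/F$ is Galois, $[LE:E]=[L:L\cap E]=4$ (natural irrationalities) forces $L\cap E=F$. But $\eta\in L\cap E$ and $\eta\notin F$ since $w\notin F^2$ by Lemma \ref{galois_closure}(2), a contradiction.

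The step I expect to be the most delicate is verifying $F(\eta)\cap L=F$ in the non-Galois case of (2) and the analogous collapse in the cyclic case of (3); everything pivots on Lemma \ref{galois_w}, which ties the membership $\eta\in L$ directly to the Galois condition and thus separates the three behaviours (trivial closure, degree-two closure, no closure) along the elementary abelian/non-Galois/cyclic trichotomy.
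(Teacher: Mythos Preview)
Your proof is correct and, for part (3), takes a cleaner route than the paper. For the direction ``not cyclic $\Rightarrow$ closure exists'' the paper argues by contrapositive: assuming no closure exists, it shows $P(X)$ must be reducible over $F(\eta)$, then runs through the three reducibility cases of Lemma \ref{irred} (ruling two of them out by exhibiting an auxiliary irreducible quartic $T(X)=X^4+2uX^2+u^2-4w$) to force $w(u^2-4w)\in F^2$. You instead construct the closure directly, using Lemma \ref{galois_w} to get $\eta\notin L$ in the non-Galois case, hence $L\cap F(\eta)=F$ and $[LF(\eta):F(\eta)]=4$. For ``cyclic $\Rightarrow$ no closure'' the paper invokes part (2) (whose own proof leaves the verification that $F(\eta)$ is actually a candidate implicit), while your natural-irrationalities argument is self-contained. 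Your approach trades the paper's explicit polynomial manipulation for a structural argument hinging on the Galois/non-Galois dichotomy; this is shorter and, importantly, makes (3) logically independent of (2).

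One small point to make explicit: your argument for (2) when $w\notin F^2$ only treats the non-Galois case. In the remaining (cyclic) case the hypothesis of (2) is vacuous by your proof of (3), and since your (3) does not rely on (2) the logic is sound---but you should say so rather than leave the reader to reconstruct the case split.
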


\begin{proof}
    It follows from Lemma \ref{inter} and Theorem \ref{abelian-extension-properties} that $L/F$ has a unique quadratic
    sub-extension $M=F(\gamma)$ for some $\gamma \in L$ such that $\gamma^2=u^2-4w$.
    \begin{enumerate}
        \item Follows from Theorem \ref{abelian-extension-properties}.
        \item
        Suppose $L/F$ admits an elementary abelian closure $E$.
        If $w \in F^2$ then $E=F$, also by Lemma \ref{abelian-extension-properties} we have that $L/F$ is elementary
        abelian, implying $L/F$ has trivial elementary abelian closure hence the statements is true.
        Let $L/F$ be an non-elementary abelian and $\eta$ be an element in $\overline{F}$ such that $\eta^2=w$.
        Note $\alpha$ is a biquadratic generator of $LE/E$ and $P(X)$ is the minimal polynomial by Remark
        \ref{elem_min_poly_remains_rmk}, hence $w \in E^2$ by Lemma \ref{galois_closure} and $F(\eta) \subseteq E$.
        It follows from the minimality of degree of $E$ over $F$ such that $E=F(\eta)$.

        \item
        $\Longrightarrow$
        Suppose $L/F$ does not admit an elementary abelian closure, then $L/F$ is not elementary abelian and $w \not\in F^2$
        by Lemma \ref{galois_closure}.
        Let $\eta \in \overline{F}$ such that $\eta^2=w$ then $E:=F(\eta)$ is a quadratic extension, $E \subset L$. Indeed,
        $P(X)$ is reducible over $E$ otherwise $LE=E(\alpha)$ is quartic extension over $E$ and elementary abelian
        by Lemma \ref{galois_closure}, contradicting that $L/F$ does not admit an elementary abelian closure.
        From Lemma \ref{irred} we know that at least one the following statements is true
        \begin{enumerate}
            \item $u^2-4w=-4(\beta^2-\sigma(\beta)^2)^2 \in E^2$
            \item $-u+2\sqrt{w}=2(\beta+\sigma(\beta))^2 \in E^2$
            \item $-u-2\sqrt{w}=-2(\beta-\sigma(\beta))^2 \in E^2$
        \end{enumerate}

        If $\delta \in E$ such that $\delta^2=-u-2\sqrt{w}$ (resp. $\delta^2=-u+2\sqrt{w}$) then $\delta$ is a root of
        $T(X)=X^4+2uX^2+u^2-4w$.
        $T(X)$ is irreducible over $F$ by Lemma \ref{irred} as $u^2-4w \not\in F^2$ and $(2u)^2-4(u^2-4w)=16w \not\in F^2$
        this implies $F(\delta)$ is a quartic extension of $F$ contradicting that $E$ is a quadratic extension of $F$.
        It follows that there exists $\theta \in E$ such that $\theta^2 = u^2-4w$, $F(\theta)$ is a quadratic extension
        as $u^2-4w \not\in F^2$ by Lemma \ref{irred}.
        Hence $F(\theta)=F(\eta)$ it follows from Lemma \ref{quad} that $\frac{u^2-4w}{w} \in F^2$ hence
        $w(u^2-4w) \in F^2$ proving that $L/F$ is cyclic by Lemma \ref{galois_closure}.

        $\Longleftarrow$
        Let $L/F$ be cyclic.
        Suppose $L/F$ admits an elementary abelian $E$, then $E=F(\eta)$ for some $\eta \in \overline{F}$ such that
        $\eta^2=w$ by $2.$
        Note that $L/F$ cyclic implies $w \in L^2$ by Lemma \ref{galois_w}, it follows that $E$ is $F$-isomorphic to the
        quadratic sub-extension of $L/F$ hence $P(X)$ is reducible over $E$ and $LE/E$ is not a quartic extension.
        This contradicts our assumption, hence $L/F$ does not admit an elementary abelian.

    \end{enumerate}
\end{proof}

\begin{theorem}\label{elem_closure_descent}
Let $L/F$ and $L'/F$ be biquadratic fields extensions admitting elementary abelian closure $E$ and $E'$ respectively.
If $\alpha$ is a biquadratic generator of $L/F$ and $\beta \in L'$ is a biquadratic generator of $L'/F$ such that
$min(\alpha,F)=P(X)=X^4+uX^2+w$ and $min(\beta, F)=Q(X)=X^4+vX^2+z$ then the following are equivalent
\begin{enumerate}
    \item $L/F \cong L'/F$
    \item $LE/E/F \cong L'E'/E'/F$
    \item There exist $a,c \in F$ and $s,r \in \{1,-1\}$ such that
    $\frac{z}{w}=c^2$, $ \frac{v^2-4z}{u^2-4w} =a^2$ and $\Omega= \frac{1}{2}\bigg(\frac{ra}{w} + \frac{uv-4wcs}{w(u^2-4w)}\bigg) \in F^2 $,  
    when $\Omega =0$, in addition, either $\frac{ra}{w} \in F^2$ or  $ra \in F^2$.

\end{enumerate}
\end{theorem}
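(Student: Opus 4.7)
The plan is to establish the cyclic chain $1 \Rightarrow 2 \Rightarrow 3 \Rightarrow 1$. The two main tools are Lemma \ref{unique_elem_closure}, which gives $E = F(\eta)$ with $\eta^2 = w$ and $E' = F(\eta')$ with $\eta'^2 = z$, and Remark \ref{elem_min_poly_remains_rmk}, which guarantees that $\alpha$ and $\beta$ remain biquadratic generators of the elementary abelian extensions $LE/E$ and $L'E'/E'$ with unchanged minimal polynomials $P(X)$ and $Q(X)$. The classification from Theorem \ref{elem_class} can then be applied to $LE/E$ and $L'E'/E'$, and the $E$-rational conditions there must be translated down to $F$-rational conditions via the explicit isomorphism $E \to E'$.

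For $1 \Rightarrow 2$, I would extend an $F$-isomorphism $\phi: L \to L'$ to an $F$-embedding $\overline{\phi}$ of a common algebraic closure. Since any element of $\overline{F}$ squaring to $w$ is $\pm\eta$, we get $\overline{\phi}(E) = F(\overline{\phi}(\eta))$ which is an elementary abelian closure of $\overline{\phi}(L) = L'$; by the uniqueness in Lemma \ref{unique_elem_closure} (inside $\overline{F}$), this coincides with $E'$. Hence $\overline{\phi}$ restricts to an $F$-iso $E \to E'$, giving the tower isomorphism.

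For $2 \Rightarrow 3$, start with a tower iso $\psi : LE \to L'E'$. Restricted to $E$, Lemma \ref{quad} applied to $E = F(\eta)$ and $E' = F(\eta')$ forces $z/w = c^2$ for some $c \in F^\times$, and $\psi(\eta) = s\eta'/c$ for some sign $s \in \{\pm 1\}$. Next, by Remark \ref{inter_rmk} the unique quadratic $F$-subextension of $L$ is $F(\sqrt{u^2-4w})$, which $\psi$ must identify $F$-isomorphically with $F(\sqrt{v^2-4z})$; Lemma \ref{quad} then yields $(v^2-4z)/(u^2-4w) = a^2$ with a sign choice $r \in \{\pm 1\}$ for $\gamma'/\gamma$. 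Finally, Theorem \ref{elem_class} applied to the two elementary abelian extensions $LE/E$ and $L'E'/E'$ (after transporting $L'E'/E'$ to $LE/E$ via $\psi|_E$) gives one of six $(E')^2$-conditions of the form $(-v \pm 2\eta')/(\cdots) \in (E')^2$. Two of these will already be forced by the first two conditions of 3 together with the matching on the $F$-subextension $F(\sqrt{u^2-4w})$; the remaining condition, an element $x + y\eta' \in (E')^2$, is equivalent to $x^2 - y^2 z \in F^2$ plus a trace-normalisation, and routine manipulation (using $z = c^2 w$ and $v^2 - 4z = a^2(u^2-4w)$) reduces it to $\Omega \in F^2$. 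The degenerate clause when $\Omega = 0$ corresponds precisely to the boundary case $y = 0$ (respectively $x = 0$), where $x + y\eta'$ lies in $F$ and the square condition splits into either $ra/w \in F^2$ or $ra \in F^2$.

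For $3 \Rightarrow 1$, I would construct an explicit $F$-isomorphism $L \to L'$. Since $\alpha^2 = (-u + \gamma)/2$ with $\gamma^2 = u^2-4w$, the natural candidate for the image of $\alpha$ is an element $\tilde{\beta} \in L'$ built from $\beta$ and the scalars $a, c, r, s$ so that $\tilde{\beta}^2 = (-u + \gamma)/2$ transported along $\gamma \mapsto r\gamma'/a$. The conditions of 3 are exactly what is needed to guarantee that $\tilde{\beta}$ lies in $L'$ and satisfies $P(\tilde{\beta}) = 0$; then $\alpha \mapsto \tilde{\beta}$ defines a ring homomorphism $L \to L'$ which, being between fields of equal finite dimension, is the desired $F$-isomorphism.

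The main obstacle is the $2 \Rightarrow 3$ step: distilling the six $(E')^2$-conditions of Theorem \ref{elem_class} into the single compact $F$-expression $\Omega$, tracking all four choices of sign ($r,s$) and square root, and correctly identifying the degenerate case $\Omega = 0$. This is essentially a careful descent computation, but writing $\Omega$ in closed form and verifying it captures every case uniformly is where most of the bookkeeping lives.
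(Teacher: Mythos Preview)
Your outline for $1 \Rightarrow 2$ and for the algebraic content of $2 \Rightarrow 3$ is essentially the paper's argument: the paper likewise transports $E'$ to $E$ via $\eta' \mapsto c\eta$, applies Theorem \ref{elem_class}, rules out the case $\frac{-v-2c\eta}{u^2-4w}\in E^2$ (since its Galois conjugate would force the two distinct quadratic subextensions of $L'E'/E'$ to coincide), and then expands $\frac{-v-2c\eta}{-u-2s\eta}=(x+y\eta)^2$ to obtain the system whose solution gives $\Omega$. One minor difference: you extract $a^2=(v^2-4z)/(u^2-4w)$ from the unique quadratic $F$-subextension of $L$, whereas the paper obtains it as the discriminant of the auxiliary quartic satisfied by $y$. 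Your route is conceptually nice but only works verbatim in the non-elementary-abelian case; when $E=F$ there are three $F$-subextensions and you need a separate sentence.

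The real divergence, and the gap in your plan, is the implication $3 \Rightarrow 1$. The paper does \emph{not} close the cycle this way. Instead it proves $2 \Rightarrow 1$ directly by a short Galois argument: inside $LE$, let $\sigma$ generate $\operatorname{Gal}(LE/L)$ and let $\pm\mu,\pm\kappa$ be the roots of $Q$; from $\mu\kappa=c\alpha\delta$ and $\sigma(\delta)=-\delta$ one computes $\sigma(\mu\kappa)=-\mu\kappa$ and $\sigma(\mu-\kappa)=\pm(\mu+\kappa)$, which forces either $\sigma(\mu)=\mu$ or $\sigma(\kappa)=\kappa$, hence a root of $Q$ lies in $L$. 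The equivalence $2 \Leftrightarrow 3$ is then handled separately. Your proposed $3 \Rightarrow 1$ would need to build $\tilde{\beta}$ with $P(\tilde{\beta})=0$ \emph{and} verify $\tilde{\beta}\in L'$ rather than merely $\tilde{\beta}\in L'E'$; the conditions in $3$ give you the former immediately (they say $(x+y\eta)\cdot(\text{root of }P)$ is a root of $Q$ in $LE$), but the latter is precisely the descent obstruction, and nothing in your sketch explains why the specific combination lands in the $\sigma$-fixed subfield. In practice you would end up reproducing the paper's $2\Rightarrow 1$ computation inside your $3\Rightarrow 1$, so the cyclic scheme gains nothing and obscures where the actual work happens.

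A smaller point: your claim that ``$x+y\eta'\in (E')^2$ is equivalent to $x^2-y^2z\in F^2$ plus a trace-normalisation'' is not quite right; the norm being a square is necessary but not sufficient. The paper avoids this by expanding $(x+y\eta)^2$ and matching both the $1$- and $\eta$-coefficients, which is what produces the quartic in $y$ whose solution is $\Omega$.
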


\begin{proof}

    By Lemma \ref{unique_elem_closure}, $E=F(\eta)$ for some $\eta \in E$ such that $\eta^2=w$,
    similarly $E'=F(\epsilon)$ for some $\epsilon \in E'$ such that $\epsilon^2=z$.

    $1. \implies 2.${change}
    The isomorphism $L/F \cong L'/F$ extends to a isomorphism of $LE/E/F \cong L'E'/E'/F$, this follows from the uniqueness of the elementary abelian closure  

    $2. \implies 1.$ 
    Let $LE/E/F \cong LE'/E'/F$. 
    This implies that $Q(X)$ is the minimal polynomial of some primitive element in $LE$.
    To show that $L/F \cong L'/F$ it suffice to prove that $Q(X)$ has a root in $L$.
    Let $\pm \alpha, \pm \delta$ be the roots of $P(X)$ in $LE$ and $\pm \mu, \pm \kappa$ be the roots of $Q(X)$ in $LE$,
    such roots exist because $LE/E$ is Galois.
    Let $\sigma$ be the generator of $Gal(LE/L)$. We show that either $\sigma(\mu)=\mu$ or $\sigma(\kappa)=\kappa$.
    From $\epsilon^2=z=(\mu\kappa)^2$, $\eta^2=w=(\alpha\delta)^2$, $-v=\mu^2+\kappa^2$ and $-u=\alpha^2+\beta^2$ we
    get that there exist $s_1, s_2 \in \{1,-1\}$ such that $\mu\kappa=s_1 \epsilon$ and $\alpha\delta=s_2 \eta$, now by Lemma
    \ref{quad} there exist $c \in F$ such that $\epsilon=c\eta$ 
    so $\mu\kappa=c'\alpha\delta$ where $c'=c s_1 s_2$.
    Up to changing the choice $\epsilon$ (resp $\eta$) of the root of $X^2 -z$ (resp. $X^2-w$), we can assume $s_1=s_2=1$.
    So that $\sigma((\mu-\kappa)^2)=\sigma(-v-2\epsilon)=-v+2\epsilon=(\mu+\kappa)^2$
    hence $\sigma(\mu)-\sigma(\kappa)=\sigma(\mu-\kappa)= \pm (\mu+\kappa)$.
    Also $\sigma(\delta)=\sigma(\frac{ \eta}{\alpha})=-\frac{\eta}{\alpha}=-\delta$, hence
    $\sigma(\mu)\sigma(\kappa)=\sigma(\mu\kappa)=\sigma(c\alpha\delta)=-c\alpha\delta=-\mu\kappa$.
    If $\sigma(\mu)-\sigma(\kappa)=-(\mu +\kappa)$ then $\sigma(\mu)^2+(\mu+\kappa)\sigma(\mu)+\mu\kappa=0$
    so $\sigma(\mu)=-\mu$ or $\sigma(\mu)=-\kappa$.
    Note that $\sigma(\mu)=-\kappa$ is not possible otherwise
    $\sigma(\kappa)=-\mu$, then $\mu\kappa=c \eta $ is fixed by $\sigma$ contradicting that $w \not\in L^2$.
    Now $\sigma(\mu)=-\mu $ implies $\sigma(\kappa)=\kappa$ from $\sigma(\mu\kappa)=-\mu\kappa$.
    Similarly $\sigma(\mu)-\sigma(\kappa)=\mu+\kappa$ implies $\sigma(\mu)=\mu$, so indeed either $\kappa \in L$
    or $\mu \in L$ concluding the proof.

    $2. \Leftrightarrow 3.$
    Let $LE/E/F \cong L'E'/E'/F$. 
    Since $E/F \cong E'/F$ 
    is equivalent by Lemma \ref{quad} to the existence of $c \in F$ such that $\epsilon=c\eta$.
    By Theorem \ref{elem_class}, $LE/E \cong L' E'/E'$  is equivalent to one of the following statements being true
    \begin{enumerate}
        \item[(a)]  $\frac{-v-2c\eta }{-u-2s\eta} \in E^2$ for some $s \in \{1,-1\}$
        \item[(b)] $\frac{-v-2c\eta}{u^2-4w} \in E^2$
    \end{enumerate}
    We prove that $(b)$ cannot be true since $LE/E$ and $L' E'/E'$ are elementary abelian.
    We argue by contradiction, assuming that there exist $g,h \in F$ such that $\frac{-v-2c \eta}{u^2-4w}=(g+h\eta)^2$.
    Let $\sigma$ be generator of $Gal(E/F)$, then $\sigma(\frac{-v-2c \eta }{u^2-4w})=\frac{-v+2c \eta }{u^2-4w}=
    (g-h\eta )^2 \in E^2$
    hence $E(\sqrt{-v-2c \eta }) \cong E(\sqrt{-v+2c \eta}) \cong E(\sqrt{u^2-4w})$, by Lemma \ref{quad},
    contradicting that $E(\sqrt{-v-2 c \eta} ), E(\sqrt{-v+2 c \eta})$ are distinct quadratic sub-extensions of $LE/E$ by Theorem
    \ref{abelian-extension-properties}.
    It follows that $(a)$ is true, that is $m :=\frac{-v-2c\eta}{-u-2s\eta}=(x+y\eta)^2$ for some $x,y \in E$.
    That is
    $$\frac{(-v-2c\eta) (-u+2s\eta) }{u^2-4w} = x^2 +4w y^2 +2xy \eta$$
    Equivalently,
    $$ uv -4csw + 2  \eta (cu -sv) =  (x^2 +4w y^2)( u^2-4w)+2xy(u^2-4w) \eta$$
    This is true if and only if $(x^2 +w y^2)( u^2-4w)= uv -4csw  $ and $cu-sv = xy(u^2-4w)$  $( * )$.
    \begin{itemize}
        \item when $cu=sv$, we have $\frac{v^2-4z}{u^2-4w}= c^2$, one could have
        \begin{itemize}
            \item $x=0$, then $w y^2( u^2-4w) = uv-4csw$, that is $y^2 = \frac{uv-4csw}{w ( u^2-4w)}= \frac{c}{sw}$ (In this case, $w^3 y^4 = z$). 
            \item $y=0$, then $x^2= \frac{uv-4csw}{u^2-4w}= \frac{c}{s}$ ($w x^4= z$). 
         Note that taking $a:=c$, $r:=-1$ implies  $\frac{1}{2}\bigg(\frac{ra}{w} + \frac{uv-4wcs}{w(u^2-4w)}\bigg)=0$.
        \end{itemize}
        (Note that both cannot be true simultaneously otherwise $w$ would be a square, and $L/F$ is not elementary abelian.)
        \item when $cu \neq sv$, We have $x = \frac{ cu-sv }{ y(u^2-4w)}$ and
        $$y^4 + \frac{4wcs-vu}{w(u^2-4w)} y^2 + \frac{1}{w}\bigg(\frac{vs-uc}{4w-u^2} \bigg)^2 =0 ,$$
        so $y$ is a root of $S(X)=X^4+(\frac{4wcs-vu}{w(u^2-4w)})X^2 +\frac{1}{w}\bigg(\frac{vs-uc}{4w-u^2} \bigg)^2$.\\ \\
        Let $\Delta=\bigg(\frac{uv-4wcs}{w(u^2-4w)}\bigg)^2-4\bigg(\frac{1}{w}\big(\frac{vs-uc}{4w-u^2}\big)^2\bigg)
        =\frac{a^2}{w^2}$
        then $y$ is such that $$y^2=\frac{1}{2}\bigg(\pm \sqrt{\Delta} + \frac{uv-4wcs}{w(u^2-4w)}\bigg)=
        \frac{1}{2}\bigg(\pm \frac{a}{w} + \frac{uv-4wcs}{w(u^2-4w)}\bigg).$$
        Hence we have that either $\frac{1}{2}\bigg( \frac{a}{w} + \frac{uv-4wcs}{w(u^2-4w)}\bigg) \in F^2$ or
        $\frac{1}{2}\bigg(- \frac{a}{w} + \frac{uv-4wcs}{w(u^2-4w)}\bigg) \in F^2$.

        Proving 3.
        (Note that given $L/F$ as in the statement we cannot have \\
        $\frac{1}{2}\bigg( \frac{a}{w} + \frac{uv-4wcs}{w(u^2-4w)}\bigg) \in F^2$  and
        $\frac{1}{2}\bigg(- \frac{a}{w} + \frac{uv-4wcs}{w(u^2-4w)}\bigg) \in F^2$.
        Indeed, if that happens since
        $$ \frac{1}{2}\bigg( \frac{a}{w} + \frac{uv-4wcs}{w(u^2-4w)}\bigg)\frac{1}{2}\bigg(- \frac{a}{w} + \frac{uv-4wcs}{w(u^2-4w)}\bigg) = \frac{1}{w}\bigg(\frac{vs-uc}{4w-u^2}\bigg)^2$$ then $w$ would be a square in $F$ which is excluded in the assumption.)

        $3. \Rightarrow 2.$ Given $a,c,d \in F$ and $s,r \in \{1,-1\}$ such that
        $\frac{z}{w}=c^2$, $\frac{v^2-4z}{u^2-4w} =a^2$
        and $\frac{1}{2}\bigg(\frac{ra}{w} + \frac{uv-4wcs}{w(u^2-4w)}\bigg) \in F^2$, we are looking for $x$ and $y$ in
        $F$ satifying $(*)$. When $\Omega \neq 0$, we can take $y$ such that
        $\frac{1}{2}\bigg(\frac{ra}{w} + \frac{uv-4wcs}{w(u^2-4w)}\bigg)  = y^2$ and
        $x= \frac{ cu-sv }{ y(u^2-4w)}$ and when $\Omega=0$, $\frac{ra}{w} + \frac{uv-4wcs}{w(u^2-4w)}=0$ then
        $a = - \frac{uv-4wcs}{r(u^2-4w)}$  so that $$ \begin{array}{lll} a^2 = \frac{v^2 -4z}{u^2 -4w} =\bigg( \frac{uv-4wcs}{r(u^2-4w)}\bigg)^2\\ \\
        \Leftrightarrow  (v^2 -4z)(u^2 -4w) = u^2 v^2 -8uvwcs + 16w^2 c^2 \\
        \Leftrightarrow u^2 v^2 -4zu^2 -4w v^2+ 16 wz  =  u^2 v^2 -8uvwcs + 16wz \\
        \Leftrightarrow zu^2 +w v^2= 2uvwcs\\
        \Leftrightarrow  \frac{z}{w} \big( \frac{u}{v}\big)^2-2 \frac{u}{v} c s  +1 \\
        \Leftrightarrow  \big(c \frac{u}{v}\big)^2-2 \frac{u}{v} c s  +1  \\
        \Leftrightarrow  ( c \frac{u}{v} -s)^2 =0
        \Leftrightarrow cu = sv
        \end{array}$$
        If $ c r \in F^2$, then $x$ such that $x^2 =cr$ and $y=0$ would be pairs of solution for $(*)$.
        Finally, if $\frac{cr}{w}\in F^2$, then $y$ such that $y^2 = \frac{cr}{w}$ and $x=0$ would be pairs of solution for $(*)$ concluding the proof of the theorem.

    \end{itemize}
\end{proof}

\begin{remark}
    Let $L/F$ be a non-elementary abelian field extension, $y \in L$ be a biquadratic generator over $F$ with
    minimal polynomial $P(X)=X^4+uX^2+w$.
    Let $E:=F(\eta)$ where $\eta \in \overline{F}$ such that $\eta^2=w$, then by Lemma \ref{unique_elem_closure} we have
    that $E$ the elementary abelian closure of $L/F$, so $LE/E$ is an elementary abelian extension and $y$ is a
    biquadratic generator of $LE/E$.
    Note that $P(X)=X^{4}-2(a+b)X^2+(a-b)$ where $a=\frac{1}{4}(-u-2\eta)$ and $b=\frac{1}{4}(-u+2\eta)$. Moreover by
    Theorem \ref{abelian-extension-properties} there exists $\alpha, \beta \in LE$ such that $\alpha^2=a$, $\beta^2=b$
    and $y=\alpha+\beta$.
    By Remark \ref{elem_generators} we have that any biquadratic generator of $LE/E$ is in $X:=X_{a} \cup X_{b} \cap
    X_{ab}$ where
    $X_{a}=\{ -\frac{(a+b)r}{2}-\frac{a+3b}{2(a-b)}y+\frac{r}{2}y^2+\frac{s}{2(a-b)}y^3|r,s \in E^\times\}$,
    $X_{b}=\{-\frac{(a+b)r}{2}+\frac{(3a+b)s}{2(a-b)}y+\frac{r}{2}y^2-\frac{s}{2(a-b)}y^3|r,s \in E^\times\}$ and
    $X_{ab}=\{\frac{r-s}{2(a-b)}y^3+\frac{(3a+b)s-(a+3b)r}{2(a-b)}y | r,s \in E^\times\}$.
    We want to show that all biquadratic generators of $L/F$ are in $ X':=\{s\alpha+r\beta| r,s \in E^\times \text{ and }
    \sigma(s)=r\} \subset X_{ab}$ where $\sigma$ is a generator of $LE/L$.
    Note that $\sigma(a)=b$ i.e $\sigma(\alpha)^2=\sigma(\beta)^2$ hence $\sigma(\alpha)=\beta$ or $\sigma(\alpha)=-\beta$.
    Suppose $\sigma(\alpha)=-\beta$ then $\sigma(y)=\sigma(\alpha)+\sigma(\beta)=-(\alpha+\beta)=-y$ contradicting that
    $y \in L$ hence fixed by $\sigma$, so $\sigma(\alpha)=\beta$.
    Let $y' \in LE$ be a biquadratic generator of $LE/E$. 
    If $y \in X_{a}$ then $y'=s\beta+r\alpha\beta$ for some $s,r \in E^\times$ so $\sigma(y')=\sigma(s)\alpha+\sigma(r)\alpha\beta \in X_{b}$
    hence $\sigma(y') \neq y'$ so $y' \not\in L$. Similarly if $y' \in X_{b}$ then $y' \not\in L$.
    Now let $y' \in X_{ab}$ then $y'=s\alpha+r\beta$ for some $r,s \in E^\times$, so $\sigma(y')=\sigma(r)\alpha+
    \sigma(s)\beta$, it follows that $y' \in L$ if and only if $\sigma(s)=r$, that is if and only if $y' \in X'$
    concluding the proof.

\end{remark}

\begin{definition}
   Let $E/F$ be a quadratic field extension, then $Gal(E/F) \cong S_2$.
    Let $\phi: S_2 \rightarrow Gal(E/F)$ be a group isomorphism, then $S_2$ act on $\frac{E^\times}{(E^\times)^2}$ by the group action
    $\tau: S_2 \times \frac{E^\times}{(E^\times)^2} \rightarrow \frac{E^\times}{(E^\times)^2}$ as $\tau(\sigma, x)=
    \phi(\sigma)(x)(E^\times)^2$.
\end{definition}

\begin{lemma}\label{final_non_Galois_class}
     Let $F$ be field and $S \subset F^\times$ be a complete set of representatives of $\frac{F^\times}{(F^\times)^2}-{\{(F^\times)^2 \}}$.
    Let $s \in S$
        and $N_{F(\sqrt{s})}=\{\alpha \in ({F(\sqrt{s})})^\times| \alpha\sigma(\alpha) \in (F(\sqrt{s})^\times)^2\}$
    where $\sigma$ is the generator of  $Gal(F(\sqrt{s})/F)$.
    Then $$\{ L/F \text{ non-Galois quartic extension}\} / \sim_{iso} \simeq
\coprod_{s \in S} \frac{\frac{F(\sqrt{s})^\times - N_{F(\sqrt{s})}}{(F(\sqrt{s})^\times)^2}}{S_2}$$
\end{lemma}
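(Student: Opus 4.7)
The plan is to set up a bijection $\Phi$ that sends a non-Galois biquadratic extension $L/F$ to the pair $(s,[\alpha])$, where $s\in S$ parametrises the unique quadratic sub-extension $M\subset L$ and $[\alpha]\in M^\times/(M^\times)^2$ is the square-class of a radical parameter presenting $L=M(\sqrt{\alpha})$, taken modulo the action of $Gal(M/F)\cong S_2$. The source of the map is well defined because, for $L/F$ non-Galois biquadratic, Lemma \ref{inter} gives the existence of a quadratic sub-extension $M$ and Theorem \ref{abelian-extension-properties} forces its uniqueness (else $L/F$ would be elementary abelian, in particular Galois). Lemma \ref{quad} then yields a unique $s\in S$ with $M\cong_F F(\sqrt{s})$, and since $[L:M]=2$ we can write $L=M(\sqrt{\alpha})$ for some $\alpha\in M^\times$.

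The technical heart is showing that the image lies in the claimed set, i.e., that $L/F$ is non-Galois if and only if $\alpha\notin N_{F(\sqrt{s})}$. Let $\sigma$ generate $Gal(M/F)$. Then $L/F$ is Galois if and only if $\sqrt{\sigma(\alpha)}\in L$, if and only if $\sigma(\alpha)\in L^2$. Expanding a hypothetical square $(a+b\sqrt{\alpha})^2=a^2+b^2\alpha+2ab\sqrt{\alpha}$ lying in $M$ forces $ab=0$; the case $b=0$ gives $\sigma(\alpha)\in(M^\times)^2$, hence $\alpha\in(M^\times)^2$, contradicting $[L:M]=2$; the case $a=0$ yields $\sigma(\alpha)=b^2\alpha$, equivalently $\alpha\sigma(\alpha)=(b\alpha)^2\in(M^\times)^2$, i.e.\ $\alpha\in N_M$. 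Hence non-Galois corresponds exactly to $\alpha\notin N_M$. Observing $(M^\times)^2\subset N_M$ (since $(\gamma^2)\sigma(\gamma^2)=(\gamma\sigma(\gamma))^2$) also guarantees that the condition forces $L$ to be properly quartic.

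Well-definedness comes from the two natural ambiguities: a different choice of square root of $\alpha$ replaces $\alpha$ by a square in $M$ (absorbed by the quotient by $(M^\times)^2$), and a different $F$-isomorphism $M\cong F(\sqrt{s})$ differs by an element of $Gal(F(\sqrt{s})/F)\cong S_2$ which acts as $\sigma$ on $\alpha$ (absorbed by the $S_2$-quotient). Note that $\sigma$ preserves $N_M$ and $(M^\times)^2$, so the $S_2$-action on $(F(\sqrt{s})^\times-N_{F(\sqrt{s})})/(F(\sqrt{s})^\times)^2$ is well defined. Injectivity of $\Phi$ then follows: if two images agree, we get the same $s$ and, after a fixed identification, either $\alpha'/\alpha\in(M^\times)^2$ (whence $L=L'$) or $\alpha'/\sigma(\alpha)\in(M^\times)^2$ (whence one builds an $F$-isomorphism $L\to L'$ extending $\sigma\colon M\to M$ and sending $\sqrt{\alpha}$ to $\sqrt{\sigma(\alpha)}\in L'$).

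For surjectivity, given $s\in S$ and $\alpha\in F(\sqrt{s})^\times-N_{F(\sqrt{s})}$, set $L:=F(\sqrt{s})(\sqrt{\alpha})$. Since $N_{F(\sqrt{s})}\supset(F(\sqrt{s})^\times)^2$, $\alpha$ is not a square, so $[L:F]=4$. Writing $\alpha=a+b\sqrt{s}$, the minimal polynomial of $\sqrt{\alpha}$ over $F$ is $X^4-2aX^2+(a^2-sb^2)$, so $L/F$ is biquadratic, and by the characterisation above it is non-Galois. The main obstacle is the precise translation of the Galois condition into the norm-square condition $\alpha\in N_{F(\sqrt{s})}$, together with the careful verification that the $Gal(F(\sqrt{s})/F)$-action on the radical parameters matches the ambiguity of the choice of isomorphism $M\cong F(\sqrt{s})$ in the definition of $\Phi$.
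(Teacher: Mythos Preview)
Your proof is correct, but it takes a genuinely different route from the paper's. You parametrise a non-Galois biquadratic $L/F$ via its unique quadratic \emph{sub-extension} $M=F(\sqrt{u^2-4w})\subset L$ and the square-class of a radical parameter $\alpha\in M^\times$ for $L/M$, then characterise ``non-Galois'' by the direct computation $\alpha\notin N_M$. The paper instead parametrises via the \emph{elementary abelian closure} $E=F(\sqrt{w})$ of Lemma~\ref{unique_elem_closure} (which is \emph{not} contained in $L$), and then invokes the classification of elementary abelian extensions (Theorem~\ref{elem_class}) together with the descent result Theorem~\ref{elem_closure_descent} to transfer isomorphism information from $LE/E$ back to $L/F$. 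Concretely, for the same $L/F$ the two bijections land in different summands of the target: your $s$ is the square class of $u^2-4w$, the paper's $s$ is the square class of $w$; since $w(u^2-4w)\notin F^2$ in the non-Galois case, these classes are distinct.

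Your approach is more elementary and self-contained: it avoids the closure and descent machinery entirely, needing only Lemma~\ref{quad} and the basic observation that $L/F$ is Galois iff $\sigma(\alpha)\in L^2$. The paper's approach, while heavier, is the one that realises the programme announced in the introduction, namely to classify non-cyclic biquadratics \emph{by} descending through the elementary abelian closure; Lemma~\ref{final_non_Galois_class} is where Theorem~\ref{elem_closure_descent} actually earns its keep. One small point worth making explicit in your write-up: when defining $\Phi$ you should note that $\alpha\notin F$ (otherwise $F(\sqrt{\alpha})$ would be a second quadratic sub-extension and $L/F$ would be elementary abelian), so that $\sqrt{\alpha}$ really is a primitive element and your minimal-polynomial computation is valid.
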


\begin{proof}
    Let $\psi:    \coprod_{s \in S} \frac{\frac{F(\sqrt{s})^\times - N_{F(\sqrt{s})}}{(F(\sqrt{s})^\times)^2}}{S_2} \rightarrow
    \{ L/F \text{ non-Galois quartic extension}\} / \sim_{iso}$ be defined as
    $$\psi([(a+b\sqrt{s})(F(\sqrt{s})^\times)^2]_{S_2})=[F(\beta)]_{iso}$$ where $\beta$ is a root of
    $P(X)=X^4-aX^2+\frac{1}{4}b^{2}s$.
   Indeed, by Lemma \ref{irred}, we have $F(\beta)$ is a biquadratic extension since $\alpha:=a+b\sqrt{s} \notin N_{F(\sqrt{s})}$.
    Moreover, $s \not\in F^2$ hence $F(\beta)$ is not elementary abelian by Lemma \ref{galois_closure}.
  In addition,  $\alpha \not\in N_{F(\sqrt{s})}$ implies $\frac{1}{4}b^{2}s(a^2-b^{2}s) \notin F^2$. That is $F(\beta)$ is not a cyclic extension. As a consequence, $F(\beta)$ is non-Galois.

    Now we show that $\psi$ is well-defined.     Let $[(a+b\sqrt{s})(F(\sqrt{s})^\times)^2]_{S_2},[(a'+b'\sqrt{s'})(F(\sqrt{s'})^\times)^2]_{S_2} \in \amalg_{s \in S} \frac{\frac{F(\sqrt{s})^\times - N_{F(\sqrt{s})}}{(F(\sqrt{s})^\times)^2}}{S_2}$
    such that  $[(a+b\sqrt{s})(F(\sqrt{s}))^2]_{S_2}=[(a'+b'\sqrt{s'})(F(\sqrt{s'}))^2]_{S_2}$.
    We want to prove that $ [F(\beta)]_{iso}= [F(\beta')]_{iso}$ where $\beta,\beta' \in \overline{F}$ are roots of
    $P(X)=X^4-aX^2+\frac{1}{4}b^{2}s$ and $P'(X)=X^4-a'X^2+\frac{1}{4}(b')^{2}s'$ respectively.
    Since $\sqrt{s} \in F(\sqrt{s'})$, by Lemma \ref{quad} and the definition of $S$, we have $s=s'$.
    In addition from $[(a+b\sqrt{s})(F(\sqrt{s}))^2]_{S_2}=
    [(a'+b'\sqrt{s'})(F(\sqrt{s}))^2]_{S_2}$, we also have that there exists $r \in \{1,-1\}$ such that
    $\frac{a+b\sqrt{s}}{a'+rb'\sqrt{s}} \in F(\sqrt{s})^2$. This implies that
    $\sigma(\frac{a+b\sqrt{s}}{a'+sb'\sqrt{s}})=
    \frac{a-b\sqrt{s}}{a'-sb'\sqrt{s}} \in F(\sqrt{s})^2$.
    Therefore, it follows from Thereom \ref{elem_class} we have that $F(\sqrt{s})(\beta) \cong F(\sqrt{s})(\beta')$.
    Since by Lemma \ref{unique_elem_closure} we have that the elementary abelian closure of  $F(\beta)$ (resp. $F(\beta'))$
    is $F(\sqrt{s})$ (resp. $F(\sqrt{s'})$), 
   by Theorem \ref{elem_closure_descent}, $\psi$ is indeed well defined.

    Suppose $\psi([(a+b\sqrt{s})(F(\sqrt{s})^\times)^2]_{S_2})=\psi([(a'+b'\sqrt{s'})(F(\sqrt{s'})^\times)^2]_{S_2})$, that
    is $[F(\beta)]_{iso}=[F(\beta')]_{iso}$. Then, by Lemma
    \ref{unique_elem_closure} we have that $E=F(\sqrt{s}) \cong F(\sqrt{s'})$ is the elementary abelian closure of
    $F(\beta)$ and $F(\beta')$.
    By Lemma \ref{elem_closure_descent} we have that $\frac{a^2-b^{2}s}{(a')^2-(b')^{2}s'} \in F^2$
   and $LE/E \cong
    L'E/E$. Hence, by Theorem \ref{elem_class}, 
    $[(a+b\sqrt{s})(F(\sqrt{s})^\times)^2]_{S_2}=[(a'+b'\sqrt{s'})(F(\sqrt{s'})^\times)^2]_{S_2}$.
    So $\psi$ is injective.

    Let $L/F$ be a non-Galois biquadratic generator and $\beta \in L$ be a biquadratic generator with minimal polynomial
    $Q(X)=X^4+uX^2+w$. Since $w \not\in F^2$ by Lemma \ref{unique_elem_closure}, there exists a unique $s \in S$ such that 
    $w=d^{2}s$ for some $d \in F$. We have $-u-2d\sqrt{s} \not\in N_{F(\sqrt{s})}$, since $F(\sqrt{s})$ is the elementary abelian closure of $L/F$ by Lemma \ref{unique_elem_closure}.
    Now $\psi([(-u-2d\sqrt{s})(F(\sqrt{s})^\times)^2]_{S_2})=[F(\beta)]_{iso}$ 
\end{proof}

\begin{definition}\label{Q2}
  Let $F$ be a field and $C$ be a compositum
    of all quadratic fields extensions of $F$.
    If $G$ is a subgroup of $C^\times$ generated by $F$ 
    then relation $\sim_{Q^2}$ on $C^\times \times C^\times$ defined by $(a, a') \sim_{Q^2} (b, b')$ if \begin{enumerate} 
    \item $a=b$ and $a'=b'$ or,
    \item if $(a, a'),(b , b') \in F^\times \times F^\times$, then  $ \frac{a}{b} \in (F^\times)^2$ and  $ \frac{a'}{b'} \in (F^\times)^2$
   \item if $a,b \in F(\delta)^\times -F^\times $ where $\delta^2 \in F^\times$, $\sigma (a) = a'$ and $\sigma (b) = b'$ where $Gal(F(\delta)/F)= {<}\sigma{>}$,
    then $ \frac{a}{b} \in (F(\delta)^\times)^2$.

   \end{enumerate}
   One can prove that $\sim_{Q^2}$ is an equivalence relation. 
\end{definition}

We have now classified all non-cyclic extensions, the following results allows us to see the classes of elementary 
abelian extensions and non-Galois extension as subset of a set.

\begin{theorem} \label{final}
    Let $F$ be a field, $C$ be the compositum of all quadratic field extensions over $F$, then
    $$\{\text{non-cylic biquadratic extension}\}/\sim_{iso}   \simeq  \coprod_{s \in S} \frac{\frac{F(\sqrt{s})^\times -
    N_{F(\sqrt{s})}}{(F(\sqrt{s})^\times)^2}}{S_2}   \coprod \frac{\frac{F^\times}{(F^\times)^2}
\times\frac{F^\times}{(F^\times)^2} -N}{S_3} $$
    where $N$ is a subgroup of $\frac{F^\times}{(F^\times)^2} \times \frac{F^\times}{(F^\times)^2}$ isomorphic to
    $\frac{F^\times}{(F^\times)^2}$,
Moreover, the right hand side can be naturally embedded into $ \frac{\frac{C^\times \times C^\times}{\sim_{Q^2}}}{S_3}$ where $\sim_{Q^2}$ is defined in Definition \ref{Q2} and $S_3$ act on $\frac{C^\times  \times C^\times }{\sim_{Q^2}}$ similarly as defined in
    Lemma \ref{s_3_action}.

\end{theorem}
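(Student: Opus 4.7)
The plan is to combine the two classification results already established in this paper. By Lemma \ref{galois_closure}, any biquadratic extension $L/F$ falls into exactly one of three mutually exclusive classes according to $Aut(L/F)$: elementary abelian, cyclic, or non-Galois. The non-cyclic biquadratic extensions therefore partition into the elementary abelian and the non-Galois biquadratic extensions, and this partition survives passage to $F$-isomorphism classes because the automorphism group is an isomorphism invariant. Applying Theorem \ref{final_elem_class} to the first piece and Lemma \ref{final_non_Galois_class} to the second yields the coproduct decomposition on the right-hand side.

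For the embedding into $\frac{\frac{C^\times \times C^\times}{\sim_{Q^2}}}{S_3}$, I would define the map piecewise. On the elementary abelian component, send the class $[(a,b)]_{S_3}$ with $a,b \in F^\times$ to its class in the target via the inclusion $F^\times \hookrightarrow C^\times$; clause (2) of Definition \ref{Q2}, restricted to pairs in $F^\times \times F^\times$, matches the relation defining $F^\times/(F^\times)^2 \times F^\times/(F^\times)^2$, and the $S_3$-actions agree by the construction of Lemma \ref{s_3_action}. On the non-Galois component indexed by $s \in S$, send $[(a+b\sqrt{s})(F(\sqrt{s})^\times)^2]_{S_2}$ to the class of $(\alpha, \sigma(\alpha))$ in the target, where $\alpha := a+b\sqrt{s}$ and $\sigma$ generates $Gal(F(\sqrt{s})/F)$. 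Clause (3) of Definition \ref{Q2} then makes two such Galois-conjugate pairs $\sim_{Q^2}$-equivalent precisely when they differ by a square in $F(\sqrt{s})^\times$, matching the source equivalence modulo the $S_2$-action, whose generator is realised by the swap $(\alpha, \sigma(\alpha)) \mapsto (\sigma(\alpha), \alpha)$ sitting inside the $S_3$-orbit of the target. Well-definedness at the level of isomorphism classes on each piece follows directly from Theorems \ref{elem_class} and \ref{elem_closure_descent}.

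The main obstacle is proving injectivity of the combined map: the two pieces must not collide, and distinct values of $s \in S$ must remain distinct after the quotient by $S_3$. Both points follow from a single orbit analysis. Any representative in the image of an elementary abelian class lies in $F^\times \times F^\times$. Any $S_3$-orbit in the image of a non-Galois class corresponding to $s \in S$ consists of pairs built from the triple $\{\alpha, \sigma(\alpha), \alpha\sigma(\alpha)\}$; since $\alpha\sigma(\alpha)$ is the norm of $\alpha$ and hence lies in $F^\times$ while $\alpha, \sigma(\alpha) \in F(\sqrt{s}) \setminus F^\times$, every pair in the orbit contains at least one coordinate outside $F$. Since $F(\sqrt{s}) \cap F(\sqrt{s'}) = F$ for distinct $s, s' \in S$ by Lemma \ref{quad} and the choice of $S$, and since $\sim_{Q^2}$ never relates nontrivial elements drawn from distinct quadratic subextensions (clauses (1) and (3) of Definition \ref{Q2}), both forms of collision are excluded, yielding injectivity and completing the proof.
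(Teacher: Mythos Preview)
Your coproduct decomposition is exactly the paper's: split non-cyclic biquadratic extensions into elementary abelian and non-Galois via Lemma~\ref{galois_closure}, then invoke Theorem~\ref{final_elem_class} and Lemma~\ref{final_non_Galois_class}. For the embedding you take a more direct route than the paper, which first factors each non-Galois piece through $\big(F(\sqrt{s})^\times/(F(\sqrt{s})^\times)^2\big)^2\!/S_3$ before passing to the $\sim_{Q^2}$-quotient; your orbit argument separating the elementary abelian piece from the non-Galois pieces, and the pieces for distinct $s$, is correct and in fact more explicit than the paper, which simply asserts that the final map $\iota$ is an embedding ``by definition of $\sim_{Q^2}$''.

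There is, however, a gap in your injectivity argument \emph{within a fixed $s$}. You observe that clause~(3) of $\sim_{Q^2}$ identifies two Galois-conjugate pairs exactly when they differ by a square, and that the $S_2$-swap sits inside the $S_3$-orbit of the target. That gives well-definedness, but after enlarging to the $S_3$-quotient you must also exclude that $(\beta,\sigma(\beta))$ becomes $\sim_{Q^2}$-equivalent to one of the four non-swap translates of $(\alpha,\sigma(\alpha))$, namely the pairs involving the norm $\alpha\sigma(\alpha)$. Your ``main obstacle'' paragraph treats only cross-piece and cross-$s$ collisions, not this one. The paper isolates precisely this point as its case~(c): if $\alpha(F(\sqrt{s})^\times)^2=\beta\sigma(\beta)(F(\sqrt{s})^\times)^2$ then $\alpha\sigma(\alpha)=(\beta\sigma(\beta)\gamma\sigma(\gamma))^2$ is a square, contradicting $\alpha\notin N_{F(\sqrt{s})}$ via Lemma~\ref{irred}. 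In your direct framework the exclusion is actually cheaper---each of those four mixed pairs has one coordinate $\alpha\sigma(\alpha)\in F^\times$ and the other in $F(\sqrt{s})\setminus F$, so none of clauses~(1)--(3) of Definition~\ref{Q2} can relate them to a genuine Galois-conjugate pair $(\beta,\sigma(\beta))$---but the sentence saying so is missing.
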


\begin{proof}
    Let $$\psi_{elem}:\{ L/F \text{ elementary abelian extension of $F$}\} / \sim_{iso} \rightarrow
    \frac{\frac{F^\times}{(F^\times)^2} \times \frac{F^\times}{(F^\times)^2}-N}{S_3}$$ be the bijection defined in Theorem \ref{final_elem_class}
    defined in Theorem \ref{final_elem_class}
    and $$\psi_{non\text{-}G}: \{\text{non-Galois biquadratic extension of $F$}\}/\sim_{iso} \longrightarrow \coprod_{s \in S} \frac{\frac{F(\sqrt{s})^\times - N_{F(\sqrt{s})}}{(F(\sqrt{s})^\times)^2}}{S_2}$$ be the bijection defined in Theorem \ref{final_non_Galois_class}.
    We now define $$\psi: \{\text{non-cylic biquadratic extension of $F$}\}/\sim_{iso}   \longrightarrow  \coprod_{s \in S} \frac{\frac{F(\sqrt{s})^\times -
    N_{F(\sqrt{s})}}{(F(\sqrt{s})^\times)^2}}{S_2}   \coprod \frac{\frac{F^\times}{(F^\times)^2} \times\frac{F^\times}{(F^\times)^2} }{S_3}$$

    as follows, for each $[L/F]_{iso} \in \{\text{non-cylic biquadratic extension of $F$}\}/\sim_{iso}$ we set
    $\psi([L/F]_{iso})=\psi_{elem}([L/F]_{iso})$ if $L/F$ is elementary abelian and $\psi([L/F]_{iso})=\psi_{non-G}([L/F]_{iso})$
    if $L/F$ is non-Galois, it's clear that $\psi$ well-defined is a bijection since both $\psi_{eleme}$ and $\psi_{non-G}$
    are bijections, also the right hand side has an empty intersection.

    We now show $\coprod_{s \in S} \frac{\frac{F(\sqrt{s})^\times - N_{F(\sqrt{s})}}{(F(\sqrt{s})^\times)^2}}{S_2}
    \coprod \frac{\frac{F^\times}{(F^\times)^2} \times\frac{F^\times}{(F^\times)^2} }{S_3}$ can be embedded in
    $\frac{\frac{C^\times \times C^\times}{\sim_{Q^2}}}{S_3}$.

    For each $s \in S$, let $\Delta_{s}: \frac{\frac{F(\sqrt{s})^\times - N_{F(\sqrt{s})}}{(F(\sqrt{s})^\times)^2}}{S_2}
    \longrightarrow \frac{\frac{F(\sqrt{s})^\times}{(F(\sqrt{s})^\times)^2} \times \frac{F(\sqrt{s})^\times}{(F(\sqrt{s})^\times)^2}}{S_3}$
    defined by $\Delta_s([\alpha (F(\sqrt{s})^\times])^2]_{S_2}=
    [\alpha(F(\sqrt{s})^\times)^2,\sigma(\alpha)(F(\sqrt{s})^\times)^2]_{S_3}$ where $\sigma$ is the generator of $Gal(F(\sqrt{s})/F)$.

    To show $\Delta_{s}$ is well-defined, let $[\alpha(F(\sqrt{s})^\times)]_{S_2}=[\beta(F(\sqrt{s})^\times)^2]_{S_2}$,
    then $\alpha=\gamma^2 \beta$ or $\alpha=\gamma^2 \sigma(\beta)$ for some $\gamma \in F(\sqrt{s})^\times$.
    So if $\alpha=\gamma^{2}\beta$ then $\sigma(\alpha)=(\sigma(\gamma))^2 \sigma(\beta) \implies
    \sigma(\alpha)(F(\sqrt{s})^\times)^2=\sigma(\beta)(F(\sqrt{s})^\times)^2$ and
    $\alpha\sigma(\alpha)=(\gamma\sigma(\gamma))^2\beta\sigma(\beta)$ so $\alpha\sigma(\alpha)(F(\sqrt{s})^\times)^2=
    \beta\sigma(\beta)(F(\sqrt{s})^\times)^2$ proving that \\
    $[(\alpha(F(\sqrt{s})^\times)^2,\sigma(\alpha)(F(\sqrt{s})^\times)^2)]_{S_3}=[(\beta(F(\sqrt{s})^\times)^2, \sigma(\beta)(F(\sqrt{s})^\times)^2)]_{S_3}$.
    We can prove the well definedness  similarly when $\alpha=\gamma^2\sigma(\beta)$. 

    As for the injectivity, let $\Delta_{s}([\alpha(F(\sqrt{s})^\times)^2]_{S_2})=\Delta_{s}([\beta(F(\sqrt{s})^\times)^2]_{S_2})$, then
    one of the following statements is true
    \begin{enumerate}
        \item[(a)] $\alpha(F(\sqrt{s})^\times)^2=\beta(F(\sqrt{s})^\times)^2$
        \item[(b)] $\alpha(F(\sqrt{s})^\times)^2=\sigma(\beta)(F(\sqrt{s})^\times)^2$
        \item[(c)] $\alpha(F(\sqrt{s})^\times)^2=\beta\sigma(\beta)(F(\sqrt{s})^\times)^2$
    \end{enumerate}
    If $(a)$ or $(b)$ is true then $[\alpha(F(\sqrt{s})^\times)^2]_{S_2}=[\beta(F(\sqrt{s})^\times)^2]_{S_2}$ and we are done.
  We will now see that $(c)$ cannot be true. Indeed, if it was then $\alpha=\beta\sigma(\beta)\gamma^2$ for some $\gamma \in F(\sqrt{s})^\times$. Moreover, by Lemma \ref{final_non_Galois_class} we have
    $Q(X):=X^4-\frac{1}{2}(\alpha+\sigma(\alpha))X^2+\frac{1}{16}(\alpha-\sigma(\alpha))^2$as a minimal polynomial of some
    biquadratic generator of over $F(\sqrt{s})$, but $(\frac{1}{2}(\alpha+\sigma(\alpha)))^2-
    4(\frac{1}{16}(\alpha-\sigma(\alpha))^2)=\alpha\sigma(\alpha)=(\beta\sigma(\beta) \gamma \sigma (\gamma))^2 \in F(\sqrt{s})^2$
    contradicting the irreducibility of $Q(X)$ over $F(\sqrt{s})$ by Lemma \ref{irred}. 
    So indeed $\Delta_{s}$ is injective.
    
    Those maps induce naturally an embedding $$\Delta : \coprod_{s \in S} \frac{\frac{F(\sqrt{s})^\times - N_{F(\sqrt{s})}}{(F(\sqrt{s})^\times)^2}}{S_2}
    \coprod \frac{\frac{F^\times}{(F^\times)^2} \times\frac{F^\times}{(F^\times)^2} }{S_3} \longrightarrow \coprod_{s \in S} \frac{\frac{F(\sqrt{s})^\times}{(F(\sqrt{s})^\times)^2} \times \frac{F(\sqrt{s})^\times}{(F(\sqrt{s})^\times)^2}}{S_3}
    \coprod \frac{\frac{F^\times}{(F^\times)^2} \times\frac{F^\times}{(F^\times)^2} }{S_3}.$$ 
    
    Finally, by definition of $\sim_{Q^2}$, we obtain a natural embedding $$\iota : \coprod_{s \in S} \frac{\frac{F(\sqrt{s})^\times}{(F(\sqrt{s})^\times)^2} \times \frac{F(\sqrt{s})^\times}{(F(\sqrt{s})^\times)^2}}{S_3}
    \coprod \frac{\frac{F^\times}{(F^\times)^2} \times\frac{F^\times}{(F^\times)^2} }{S_3}
    \hookrightarrow \frac{\frac{C^\times \times C^\times}{\sim_{Q^2}}}{S_3}.$$
    
    The embedding of the theorem is then the composite $\iota \circ \Delta$.

\end{proof}


\begin{thebibliography}{10}

\bibitem{Chu2002QuarticFA}
H.~Chu and M.~Kang.
\newblock Quartic fields and radical extensions.
\newblock {\em J. Symb. Comput.}, 34:83--89, 2002.

\bibitem{Conrad3}
K~Conrad.
\newblock Galois groups as permutation groups.
\newblock {\em Preprint}

\bibitem{Conrad1}
K~Conrad.
\newblock Galois groups of cubics and quartics.
\newblock {\em Preprint}


\bibitem{Conrad2}
K~Conrad.
\newblock Galois groups of cubics and quartics(not in characteristic 2).
\newblock {\em Preprint}


\bibitem{Mgeneric}
S.~Marques.
\newblock Generic polynomials for cyclic function field extensions over certain
  finite fields.
\newblock {\em European Journal of Mathematics}, 4(2):585--602, 2017.

\bibitem{cubic}
S.~Marques and K.~Ward.
\newblock A complete classification of cubic function fields over any field.
\newblock {\em Preprint}, 2017.

\bibitem{MWcubic2}
S.~Marques and K.~Ward.
\newblock Cubic fields: A primer.
\newblock {\em Preprint, full version}, page~47, 2017.

\bibitem{MWcubic3}
S.~Marques and K.~Ward.
\newblock Cubic fields: A primer.
\newblock {\em European Journal of Mathematics}, pages 1--20, 2018.

\bibitem{MWcubic4}
S.~Marques and K.~Ward.
\newblock An explicit triangular integral basis for any separable cubic
  extension of a function field.
\newblock {\em European Journal of Mathematics}, pages 1--15, 2018.

\bibitem{MWcubic5}
S.~Marques and J.~Ward.
\newblock A complete study of the ramification for any separable cubic global function field.
\newblock {\em Research in Number Theory volume 5, Article number: 36}, pages 1--15, 2019.

\bibitem{JVMcubic}
V. Karemaker, S. Marques, J. Sijsling.
\newblock Cubic function fields with prescribed ramification
\newblock {\em Preprint}, 2020.

\bibitem{rotman1}
J.~Rotman.
\newblock {\em Galois Theory}.
\newblock {\em Galois Theory. Universitext. Springer, New York, NY}, 1998


\bibitem{WQSQuartics}
Q.~Wu and R.~Scheidler.
\newblock An explicit treatment of biquadratic function fields.
\newblock {\em Contributions to Discrete Mathematics [electronic only]}, 2, 01
  2007.

\end{thebibliography}
\end{document}